\newtheorem{thm}{Theorem}[section]
\newtheorem{lem}[thm]{Lemma}
\newtheorem{prop}[thm]{Proposition}
\newtheorem{cor}[thm]{Corollary}
\theoremstyle{definition}
\newtheorem{dfn}[thm]{Definition}
\newtheorem{ques}[thm]{Question}
\newtheorem{rem}[thm]{Remark}
\newtheorem{ex}[thm]{Example}
\newtheorem{emp}[thm]{}
\theoremstyle{remark}
\newtheorem*{ac}{Acknowledgments}
\def\cok{\operatorname{Cok}}
\def\depth{\operatorname{depth}}
\def\Ext{\operatorname{Ext}}
\def\dim{\operatorname{dim}}
\def\grade{\operatorname{grade}}
\def\H{\operatorname{H}}
\def\Hom{\operatorname{Hom}}
\def\id{\operatorname{id}}
\def\image{\operatorname{Im}}
\def\ker{\operatorname{Ker}}
\def\L{\mathbf{L}}
\def\lim{\operatorname{lim}}
\def\m{\mathfrak{m}}
\def\n{\mathfrak{n}}
\def\p{\mathfrak{p}}
\def\pd{\operatorname{pd}}
\def\q{\mathfrak{q}}
\def\RHom{\operatorname{\mathbf{R}Hom}}
\def\spec{\operatorname{Spec}}
\def\Tor{\operatorname{Tor}}
\def\fd{\operatorname{fd}}
\def\C{\mathcal{C}}
\def\G{\mathcal{G}}
\def\Gdim{\operatorname{G-dim}}
\def\Gid{\operatorname{Gid}}
\def\Gfd{\operatorname{Gfd}}
\def\Gpd{\operatorname{Gpd}}
\def\Hdim{\operatorname{H-dim}}
\def\Hid{\operatorname{Hid}}
\def\Rfd{\operatorname{Rfd}}
\def\CIdim{\operatorname{CI-dim}}
\begin{document}
\allowdisplaybreaks
\title{Finiteness of homological dimensions of Ext modules}
\author{Kaito Kimura}
\address{Graduate School of Mathematics, Nagoya University, Furocho, Chikusaku, Nagoya 464-8602, Japan}
\email{m21018b@math.nagoya-u.ac.jp}
\thanks{2020 {\em Mathematics Subject Classification.} 13D05; 13D07}
\thanks{{\em Key words and phrases.} Ext module, (Gorenstein) projective dimension, (Gorenstein) injective dimension, dualizing complex.}

\begin{abstract}
Let $R$ be a commutative Noetherian local ring and let $M$ and $N$ be nonzero finitely generated $R$-modules.
In this paper, we investigate how the finiteness of the homological dimension of Ext modules between $M$ and $N$ affects that of $M$ and $N$.
One of our main result states that if $\operatorname{Ext}^i_R(M,N)$ has finite projective dimension for any $0\le i\le \operatorname{Rfd}_R M$, where $\operatorname{Rfd}_R M$ is the (large) restricted flat dimension of $M$, then $M$ has finite projective or injective dimension if and only if $N$ does.
\end{abstract}
\maketitle
%%%%%%%%%%%%%%%%%%%%%%%%%%%%%%%%%%%%%%%%%%%%%%%%%%%%%%%%%%%%
\section{Introduction}

Throughout the present paper, all rings are assumed to be commutative and Noetherian.
Let $R$ be a local ring and let $M$ and $N$ be nonzero finitely genrated $R$-modules.
The characterization of the homological dimensions of two modules in terms of those of the derived Hom complex or its (finitely many) homologies has been actively studied; see \cite{DHM, GP, GS, GT, HJM, MJ2, MJ, ZG} for instance.
Considering a suitable representation of the complex, one sees that the finiteness of the appropriate homological dimensions of the two modules implies that of the derived Hom complex, as follows:
\begin{align}
\tag{I} \pd_R M<\infty \ {\rm and} \ \pd_R N<\infty &\implies \pd_R (\RHom_R(M,N))<\infty, \\ 
\tag{II} \id_R M<\infty \ {\rm and} \ \id_R N<\infty &\implies \pd_R (\RHom_R(M,N))<\infty, \\ 
\tag{III} \pd_R M<\infty \ {\rm and} \ \id_R N<\infty &\implies \id_R (\RHom_R(M,N))<\infty, 
\end{align}
where, $\pd$ denotes the projective dimension and $\id$ denotes the injective dimension.
The formulas for the Poincar\'{e} and Bass series given by Foxby \cite{F} assert that converse of (III) holds.
In particular, when $\Ext^i_R(M,N)$ has finite injective dimension for any $0\le i\le \depth R-\depth M$, $M$ has finite projective dimension if and only if $N$ has finite injective dimension.

It is a natural question to ask what can be said for (I) and (II) in this context.
Holanda, Jorge-P\'{e}rez, and Mendoza-Rubio \cite{HJM} posed the following question:
when $\Ext^i_R(M,N)$ has finite projective dimension for any $0\le i\le \depth R-\depth M$, is it true that
\begin{align}
\tag{i} \pd_R M<\infty &\iff \pd_R N<\infty, \ {\rm and}  \\ 
\tag{ii} \id_R N<\infty &\iff \id_R M<\infty?
\end{align}
They proved that ``$\Rightarrow$'' holds for both (i) and (ii), and similarly showed the corresponding statements for Gorenstein homological dimensions. %\cite[Theorem 1.1]{HJM}
Moreover, in the Cohen--Macaulay case, they also resolved ``$\Leftarrow$'' of (i). %\cite[Corollary 5.9]{HJM}
However, ``$\Leftarrow$'' of (ii), and these questions outside the Cohen--Macaulay setting, remain open. 

The main results of this paper provide an affirmative answer to the above questions.
Here, $\Hdim$ denotes either the projective dimension or the Gorenstein projective dimension, and correspondingly, $\Hid$ denotes either the injective dimension or the Gorenstein injective dimension.
The (large) restricted flat dimension $\Rfd_R M$ is defined as ${\rm sup}\{\depth R_\p-\depth M_\p \mid \p\in\spec R\}$.

\begin{thm}{\rm [Corollary \ref{pd or Gdim cor 2}]}\label{pd or Gdim Main 1}
Let $R$ be a local ring, and $M$ and $N$ nonzero finitely generated $R$-modules with $\pd_R N<\infty$ and $\Hdim_R\Ext^i_R(M,N)<\infty$ for any $0\le i\le \Rfd_R M$.
Then $\Hdim_R M<\infty$.
\end{thm}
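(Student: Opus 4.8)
The plan is to use the perfectness of $N$ to realize $\RHom_R(M,N)$ as the $R$-dual of a bounded complex, to read finiteness of $\Hdim_R M$ off that complex, and then to use the restricted flat dimension to turn the hypothesis on finitely many Ext modules into the vanishing of all of them.

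For the reductions, note that $\Ext^i_{\widehat R}(\widehat M,\widehat N)\cong\Ext^i_R(M,N)\otimes_R\widehat R$ and that $\pd$, $\Gdim$ and $\Rfd$ are all preserved along the faithfully flat map $R\to\widehat R$; so we may assume $R$ complete, hence that it admits a dualizing complex. Since $\pd_R N=:s<\infty$, the complex $N$ is perfect; put $N^{\dagger}:=\RHom_R(N,R)$, which is again perfect, with $\H^j(N^{\dagger})=\Ext^j_R(N,R)=0$ for $j\notin[0,s]$. Biduality $N\simeq\RHom_R(N^{\dagger},R)$ together with Hom--tensor adjunction gives
\[
\RHom_R(M,N)\;\simeq\;\RHom_R(Y,R),\qquad Y:=M\otimes^{\mathbf{L}}_R N^{\dagger}.
\]
As $N^{\dagger}$ is a bounded complex of finitely generated free modules, $Y$ is a bounded complex of finitely generated $R$-modules, with homology in cohomological degrees $[0,s]$ and $\H^s(Y)\cong M\otimes_R\Ext^s_R(N,R)\neq0$.

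Next one transfers finiteness of homological dimension: $\Hdim_R M<\infty\iff\Hdim_R Y<\infty\iff\Hdim_R\RHom_R(Y,R)<\infty$. The first equivalence holds because $-\otimes^{\mathbf{L}}_R N^{\dagger}$ preserves finite $\Hdim$ ($N^{\dagger}$ being perfect) and reflects it ($N^{\dagger}\otimes^{\mathbf{L}}_R k\neq0$ being bounded) --- via $-\otimes^{\mathbf{L}}_R k$ when $\Hdim=\pd$, and via stability of the class of complexes of finite Gorenstein dimension under derived tensor with a nonzero perfect complex when $\Hdim=\Gdim$. The second holds by biduality for the bounded finitely generated complex $Y$ and the fact that $\RHom_R(-,R)$ preserves finite $\Hdim$ there. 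Consequently $\Hdim_R M<\infty$ exactly when $\RHom_R(M,N)$ has finite $\Hdim$ as a complex; in particular it forces $\RHom_R(M,N)$ to be bounded, so this alone does not conclude --- the hypothesis must be used to rule out unbounded cohomology.

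Now set $t:=\Rfd_R M$, which is finite. By hypothesis the cohomology modules $\Ext^i_R(M,N)$, $0\le i\le t$, of $W:=\tau_{\le t}\RHom_R(M,N)$ all have finite $\Hdim$, so a standard dévissage along the $t+1$ truncation triangles yields $\Hdim_R W<\infty$. It therefore suffices to prove that $\Ext^i_R(M,N)=0$ for $i>\Rfd_R M$, for then $\RHom_R(M,N)\simeq W$ is bounded of finite $\Hdim$ and the transfer step gives $\Hdim_R M<\infty$. By that same transfer step the sought vanishing is equivalent to $\Gdim_R M<\infty$ --- indeed, once $\Gdim_R M<\infty$, the Auslander--Bridger equality $\Gdim_R M=\depth R-\depth M\le\Rfd_R M$ yields $\Ext^i_R(M,N)=0$ for $i>\Rfd_R M$. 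Thus the problem reduces to showing that the hypothesis implies $\Gdim_R M<\infty$; this is the heart of the argument and, I expect, the main obstacle --- it is in effect the content of the general result of which the present statement is a corollary. The bound $\Rfd_R M$ is indispensable here: when $\pd_R M<\infty$ one has $\Rfd_R M=\pd_R M$, so the window $[0,\Rfd_R M]$ is exactly what is needed to see every nonzero $\Ext^i_R(M,N)$. Concretely I would attempt to work with the minimal free complex $F^{\ast}$ computing $\RHom_R(Y,R)$: from $\Ext^i_R(M,N)=0$ for $i<0$ the complex $F^{\ast}$ is exact below degree $0$, which makes the bottom coboundary module of finite projective dimension; the hypothesis then propagates finiteness of $\Hdim$ through the coboundary and cocycle modules up to degree $t$; and one would use the restricted-flat-dimension estimate to see that $F^{\ast}$ cannot be continued past degree $t$ while retaining this, i.e. that it is eventually exact. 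Making this last step precise, and running the whole argument for $\Hdim=\Gdim$ through the Auslander and Bass classes attached to the dualizing complex, is where the real difficulty lies.
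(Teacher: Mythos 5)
Your reductions — completing $R$, exploiting that $N$ is a perfect complex to write $\RHom_R(M,N)\simeq\RHom_R(Y,R)$ with $Y=M\otimes^{\mathbf L}_R N^\dagger$, and observing that because $\Rfd_R M=\depth R-\depth M$ once $\Hdim_R M<\infty$ it suffices to prove $\Ext^i_R(M,N)=0$ for $i>\Rfd_R M$ — are all sound. But you then correctly observe that this last vanishing is \emph{equivalent} to the conclusion $\Gdim_R M<\infty$, and you explicitly stop there, saying the real difficulty lies in showing this. That is exactly the gap: what you have written is a sequence of valid reformulations of the problem, not a proof. The final sketch (look at the minimal free complex computing $\RHom_R(Y,R)$, ``propagate finiteness through coboundaries'' and ``use the restricted flat dimension to see the complex is eventually exact'') does not contain an actual mechanism for why the window $[0,\Rfd_R M]$ of finite-$\Hdim$ Ext modules should force eventual exactness; the hypothesis gives you control only on finitely many cohomology modules, and nothing in the sketch converts that into a bound on the length of the complex.

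The paper gets past this obstacle by a two-step device that is absent from your proposal. First, Corollary~\ref{pd or Gdim cor} performs Noetherian induction on $\spec R$: at a minimal prime $\p$ where the conclusion could fail, the induction hypothesis plus Lemma~\ref{fpd dualizing ext vanishing}(3) forces $\Ext^j_{R_\q}(M_\q,N_\q)=0$ for all $\q\subsetneq\p$ and $j>\Rfd_R M$, hence $\Ext^j_{R_\p}(M_\p,N_\p)$ has \emph{finite length}. This creates a strengthened hypothesis that your approach never reaches. Second, Theorem~\ref{fpd homology} handles this finite-length base case directly: it forms the mapping cone $C$ of a map $\Phi\colon P\to H$ from a finite free complex resolving the low-degree Ext modules, and then — using the precise term-by-term structure of the dualizing complex $D$ together with the fact that $C_\p$ is exact off $\m$ — shows $C\otimes_R D\simeq 0$, hence $\Hid_R(H\otimes_R D)<\infty$, from which $\Hdim_R M<\infty$ follows via the known equivalence. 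The finite-length condition is what makes the ``eventually exact'' step go through, precisely the step you could not see how to complete; without the localization argument to produce it, the argument does not close.

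One smaller caution: your asserted equivalence $\Hdim_R Y<\infty\iff\Hdim_R\RHom_R(Y,R)<\infty$ in the direction you actually need (from the dual back to $Y$) is itself not free. For $\Hdim=\Gdim$ it relies on biduality, i.e. on $Y$ already being reflexive, which is not given a priori; so even if you had the Ext-vanishing in hand, the ``transfer step'' would need more care than the one-line appeal to biduality you give. This is another place where the paper's route through $\Hid_R(H\otimes_R D)$ and the Foxby-type equivalences of \cite{CFrH} is doing real work.
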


\begin{thm}{\rm [Corollary \ref{id or Gid cor 2}]}\label{id or Gid Main 1}
Let $R$ be a local ring, $M$ and $N$ nonzero finitely generated $R$-modules with $\id_R M<\infty$ and $\Hdim_R\Ext^i_R(M,N)<\infty$ for any $0\le i\le \depth R-\depth M$.
In the case $\Hdim=\Gdim$, assume that $R$ admits a dualizing complex.
Then $\Hid_R N<\infty$.
\end{thm}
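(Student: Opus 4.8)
The plan is to pass to a ring with a dualizing complex and then transport the statement, through the Foxby equivalence, to one of exactly the shape of Theorem~\ref{pd or Gdim Main 1}. First I would arrange a dualizing complex: when $\Hdim=\Gdim$ one is assumed, and when $\Hdim=\pd$ I would replace $R$ by its completion $\widehat R$ — which has a dualizing complex and changes nothing relevant, since $\Ext^i_R(M,N)$ commutes with completion and projective and injective dimensions and depths are insensitive to $R\to\widehat R$. So assume $R$ has a dualizing complex; since $\id_RM<\infty$, Bass's theorem makes $R$ Cohen--Macaulay, so its dualizing complex is the canonical module $\omega$ (in a single degree), and $\id_R\omega<\infty$.

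Next I would use the Foxby equivalence attached to $\omega$. Because $\id_RM<\infty$, $M$ lies in the Bass class $\mathcal B(R)$; consequently $M^\flat:=\RHom_R(\omega,M)=\Hom_R(\omega,M)$ is a finitely generated module, $\Ext^{>0}_R(\omega,M)=0=\Tor^R_{>0}(\omega,M^\flat)$, $M\cong\omega\otimes_RM^\flat$, and $\pd_RM^\flat<\infty$. From the depth formula $\depth M_\p=\depth M^\flat_\p$ for all primes $\p$ and the Auslander--Buchsbaum equality one obtains $\Rfd_RM=\Rfd_RM^\flat=\pd_RM^\flat=\depth R-\depth M$, so the index range in the hypothesis is precisely $0\le i\le\Rfd_RM=\pd_RM^\flat$ — the range appearing in Theorem~\ref{pd or Gdim Main 1}. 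Adjunction together with the perfectness of $M^\flat$ gives
\[
\RHom_R(M,N)\ \simeq\ \RHom_R\bigl(\omega\otimes_R^{\mathbf L}M^\flat,N\bigr)\ \simeq\ \RHom_R\bigl(M^\flat,\RHom_R(\omega,N)\bigr)\ \simeq\ \RHom_R(M^\flat,R)\otimes_R^{\mathbf L}\RHom_R(\omega,N).
\]
Putting $P:=\RHom_R(M^\flat,R)$ (a perfect complex) and $N^\flat:=\RHom_R(\omega,N)$ (with finitely generated homology), I have $\RHom_R(M,N)\simeq P\otimes_R^{\mathbf L}N^\flat$ and $\Ext^i_R(M,N)\cong\H^i(P\otimes_R^{\mathbf L}N^\flat)$. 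I note that Theorem~\ref{pd or Gdim Main 1} itself has this shape: when $\pd_RN<\infty$, $\RHom_R(M,N)\simeq N\otimes_R^{\mathbf L}\RHom_R(M,R)$ is a perfect complex tensored with a complex with finitely generated homology.

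Then I would run the Foxby equivalence back. Over a ring with a dualizing complex, $\Hid_RN<\infty$ is equivalent to $N\in\mathcal B(R)$ together with $\Hdim_RN^\flat<\infty$, and $\mathcal B(R)$-membership for a complex with finitely generated homology is equivalent to boundedness of $N^\flat=\RHom_R(\omega,N)$. The splitting $(P\otimes_R^{\mathbf L}N^\flat)\otimes_R^{\mathbf L}k\simeq(P\otimes_R^{\mathbf L}k)\otimes_k^{\mathbf L}(N^\flat\otimes_R^{\mathbf L}k)$, with $P\otimes_R^{\mathbf L}k$ concentrated in finitely many degrees, shows that $\RHom_R(M,N)$ is bounded iff $N^\flat$ is, and that when $\Hdim=\pd$ one has $\pd_R\RHom_R(M,N)<\infty$ iff $\pd_RN^\flat<\infty$. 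So the theorem reduces to proving $\Hdim_R\RHom_R(M,N)<\infty$ from the single hypothesis that $\Hdim_R\H^i(P\otimes_R^{\mathbf L}N^\flat)<\infty$ for $0\le i\le\pd_RM^\flat$ — the assertion, for a perfect complex tensored with an arbitrary complex, that finiteness of homological dimension on finitely many homology modules forces it on the total complex; this is exactly the engine behind Theorem~\ref{pd or Gdim Main 1}, applied now with $P$ in the role of the finite-dimensional factor and $N^\flat$ the other. Pushing its output back through the Foxby equivalence yields $\Hid_RN<\infty$.

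The hard part, I expect, is the numerical bookkeeping: one must check that the $\depth R-\depth M+1$ many $\Ext$ modules supplied by the hypothesis are precisely the number of homology modules of $P\otimes_R^{\mathbf L}N^\flat$ that the engine consumes after the Foxby transfer, i.e.\ that the relevant bound comes out to $\depth R-\depth M$ and not something larger — this leans on Bass's equality $\id_RM=\depth R$, on $\pd_RM^\flat=\depth R-\depth M$, and on the behaviour of amplitude under $\RHom_R(-,R)$. A secondary point is marshalling the needed Foxby-equivalence facts over a ring with a dualizing complex — that $\id_RM<\infty$ makes $M^\flat$ a finitely generated module of finite projective dimension with $M\cong\omega\otimes_RM^\flat$, that $\Gid_RN<\infty\iff N\in\mathcal B(R)$ with $\mathcal B(R)$-membership detected by boundedness of $\RHom_R(\omega,-)$, and that in the $\pd/\id$ case it is finiteness (not merely boundedness) of $\pd_R\RHom_R(M,N)$ that is needed to conclude $\pd_RN^\flat<\infty$ and hence $\id_RN<\infty$.
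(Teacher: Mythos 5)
Your Foxby-transfer computations in the first half are sound: the reduction to a Cohen--Macaulay ring with canonical module $\omega$, the facts that $M^\flat=\Hom_R(\omega,M)$ is a finitely generated module with $\pd_RM^\flat=\depth R-\depth M=\Rfd_RM$, and the adjunction chain giving $\RHom_R(M,N)\simeq\RHom_R(M^\flat,\RHom_R(\omega,N))\simeq P\otimes_R^{\mathbf L}N^\flat$ are all correct. The paper itself records the last quasi-isomorphism in the proof of Corollary \ref{id or Gid cor 2-2} --- but only \emph{after} $\Hid_RN<\infty$ has already been established, which is precisely the point. The gap in your argument is that $N^\flat=\RHom_R(\omega,N)$ is only a complex, and, since $N$ is not yet known to lie in the Bass class, it may have nonvanishing cohomology in arbitrarily high degrees and need not be quasi-isomorphic to a bounded complex of finitely generated modules. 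The paper explicitly flags this as the obstruction in \ref{underlying methods}(3): ``$\RHom_R(\omega,N)$ [is] not even guaranteed to be bounded \dots\ This is the reason why Corollaries \ref{pd or Gdim cor 2} and \ref{id or Gid cor 2} cannot be easily deduced from [HJM, Theorems 3.3 and 3.6].'' Your plan runs straight into that wall: the ``engine'' you invoke is Theorem \ref{fpd homology}, whose statement and proof operate on a pair of finitely generated \emph{modules}, with a free resolution $F$ of $M$ available, with cokernels of differentials of $\Hom_R(F,G)$ eventually totally reflexive, and with the crucial hypothesis that the $\Ext^j$ in degrees $j>r$ have \emph{finite length}. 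None of this structure is available for the complex $P\otimes_R^{\mathbf L}N^\flat$ without further work, so the claim that ``finiteness of homological dimension on finitely many homology modules forces it on the total complex \dots\ for a perfect complex tensored with an arbitrary complex'' is not a theorem the paper proves; establishing it in enough generality to cover $N^\flat$ would essentially require reproving Theorem \ref{fid homology}, the very content you are hoping to bypass.

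A second, related omission is that you skip the reduction step that the paper uses to supply the finite-length hypothesis in the first place. Corollary \ref{id or Gid cor 2} does not follow from Theorem \ref{fid homology} directly: it goes through Corollary \ref{id or Gid cor}, a Noetherian induction on $\spec R$ in which Lemma \ref{fpd dualizing ext vanishing}(4) at smaller primes forces $\Ext^j_R(M,N)$ to be of finite length for $j>\Rfd_RM$ at the offending prime, and only then does the mapping-cone engine kick in. Your proposal has no analogue of this localization argument; the K\"unneth splitting $\bigl(P\otimes^{\mathbf L}_Rk\bigr)\otimes^{\mathbf L}_k\bigl(N^\flat\otimes^{\mathbf L}_Rk\bigr)$ correctly shows that $\pd_R\RHom_R(M,N)<\infty$ iff $\pd_RN^\flat<\infty$, but it gives no handle on proving either finiteness from the hypothesis that only $\Ext^i_R(M,N)$ for $0\le i\le\depth R-\depth M$ have finite homological dimension. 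The paper's own route is different in kind: it works directly with $K=\Hom_R(I,J)$ for injective resolutions $I,J$ of $M,N$, reruns the mapping-cone argument of Theorem \ref{fpd homology} with $\omega\otimes_RK$ in place of $H\otimes_RD$, and only at the end uses the Foxby equivalence to go from $\Hid_R(\omega\otimes_RK)<\infty$ to $\Hid_RN<\infty$. To make your reduction airtight you would have to formulate and prove a complex version of Theorem \ref{fpd homology}, together with the accompanying localization lemma --- at which point it is no longer a reduction but a parallel proof.
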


It is worth noting that we have $\Rfd_R M=\depth R-\depth M$ when $M$ has finite (Gorenstein) projective dimension.
From this perspective, the above theorems essentially provide affirmative answers to the questions (i) and (ii).
Holanda, Jorge-P\'{e}rez, and Mendoza-Rubio discovered a certain compatibility between Ext modules and the tensor product with the canonical module.
Our main idea is to treat the situation where higher Ext modules have finite length as the base case, thereby avoiding the complexities that arise when replacing the canonical module with the dualizing complex.
There is also a result analogous to Theorem \ref{pd or Gdim Main 1} for complete intersection dimension; see Proposition \ref{C.I-dim cor}.

Our main results generalize several previous results.
When the ring is Cohen–Macaulay, $\Rfd_R M=\depth R-\depth M$ holds, so Theorem \ref{pd or Gdim Main 1} recovers \cite[Corollaries 5.7, 5.8, and 5.9]{HJM}.
Theorem \ref{pd or Gdim Main 1}, when applied with $N=R$ or assuming the vanishing of certain Ext modules, can be seen as a generalization of \cite[Proposition 13]{DHM}, \cite[Proposition 6.4]{HJM}, and \cite[Theorem 1.3]{MJ}; see Remark \ref{recoverd privious results}.

What should be of interest next are concrete examples of pairs of modules $M$ and $N$ that satisfy the assumptions of our theorems.
A typical pair of $M$ and $N$ for Theorem \ref{pd or Gdim Main 1} is given by a (G-)perfect module $M$, that is, a module whose (Gorenstein) projective dimension is equal to the grade of $M$, and $N=R$.
This paper provides several more nontrivial examples that are not (G-)perfect by considering extensions of (G-)perfect modules.

The organization of this paper is as follows. 
Section 2 is devoted to the conventions and to explaining the main motivation behind our research.
In Section 3, we provide proofs for all the results presented in this paper, including Theorems \ref{pd or Gdim Main 1} and \ref{id or Gid Main 1}.
In Section 4, we observe several interesting examples that lend support to our results and study complete intersection dimension a little.

\section{Notation and Motivation}

This section aims to introduce the conventions and explain the main motivation behind our research.
We begin by defining the notation that will be used throughout the paper.

\begin{dfn}
Let $R$ be a ring and $X=(\cdots \to X^n\xrightarrow{d^n} X^{n+1} \xrightarrow{d^{n+1}} \cdots)$ a complex of $R$-modules.
We denote isomorphisms by the symbol $\cong$ and quasi-isomorphisms by the symbol $\simeq$.
We omit subscripts/superscripts if there is no ambiguity.

(1) Let $M$ be an $R$-module and let $(\cdots\to F_2 \xrightarrow{f_2} F_1\xrightarrow{f_1} F_0 \to 0)$ be a projective resolution of $M$.
The {\em $n$th syzygy} $\Omega_R^n M$ is defined as $\cok f_{n+1}$.
(Note that syzygies are isomorphic up to projective summands,  independently of the choice of projective resolution.)
Denote by $E_R(M)$ the injective hull of $M$.
If $M$ is finitely generated over $R$, we define $\Rfd_R M={\rm sup}\{\depth R_\p-\depth M_\p \mid \p\in\spec R\}$.
By \cite[Theorem 1.1]{AIL}, $0\le \Rfd_R M<\infty$ if $M$ is nonzero.

(2) For each integer $m$, $X[m]$ denotes the complex $X$ shifted $m$ degrees, defined by $X[m]^n=X^{n+m}$.
A complex $X$ is called \textit{bounded to the left} if $X^n=0$ for all $n\ll 0$; similarly, $X$ is called \textit{bounded to the right} if $X^n=0$ for all $n\gg 0$.
A complex which is bounded to the left and to the right is said to be \textit{bounded}.
The category of complexes of $R$-modules is denoted by $\C(R)$.
Given a full subcategory $\mathcal{A}(R)$ of $\C(R)$, we write $\mathcal{A}_{\sqsubset}(R)$, $\mathcal{A}_{\sqsupset}(R)$, and $\mathcal{A}_{\square}(R)$ for the subcategories of $\mathcal{A}(R)$ consisting of complexes that are bounded to the left, bounded to the right, and bounded, respectively.

(3) Let $M$ be an $R$-module.
If there exists an exact complex $X$ of projective $R$-modules such that $M\cong \image d^0$ and the complex $\Hom_R(X,P)$ is exact for any projective $R$-module $P$, then $M$ is called \textit{Gorenstein projective}.
A finitely generated Gorenstein projective module is also called \textit{totally reflexive}.
If there exists an exact sequence $X$ of flat $R$-modules such that $M\cong \image d^0$ and the complex $I\otimes_R X$ is exact for any injective $R$-module $I$, then $M$ is said to be \textit{Gorenstein flat}.
If there exists an exact sequence $X$ of injective $R$-modules such that $M\cong \image d^0$ and the complex $\Hom_R(I,X)$ is exact for any injective $R$-module $I$, then $M$ is said to be \textit{Gorenstein injective}.
We denote by $\C^\mathrm{P}(R)$, $\C^\mathrm{F}(R)$, $\C^\mathrm{I}(R)$, $\G(R)$, $\C^\mathrm{GP}(R)$, $\C^\mathrm{GF}(R)$, and $\C^\mathrm{GI}(R)$ the full subcategories of $\C(R)$ consisting of complexes whose terms are projective, flat, injective, totally reflexive, Gorenstein projective, Gorenstein flat, and Gorenstein injective modules, respectively.
Projective, flat, and injective modules are also Gorenstein projective, Gorenstein flat, and Gorenstein injective, respectively.

(4) Let $Y$ and $Z$ be complexes of $R$-modules.
When $Y$ and $Z$ are quasi-isomorphic to complexes that are bounded to the right and to the left, respectively, we define the following homological dimensions:
\begin{itemize}
\item $\pd_R Y:={\rm inf}\{{\rm sup}\{n \mid X^{-n}\ne 0\} \mid Y\simeq X\in \C_{\sqsupset}^\mathrm{P}(R)\}$ is the \textit{projective dimension};
\item $\fd_R Y:={\rm inf}\{{\rm sup}\{n \mid X^{-n}\ne 0\} \mid Y\simeq X\in \C_{\sqsupset}^\mathrm{F}(R)\}$ is the \textit{flat dimension};
\item $\id_R Z:={\rm inf}\{{\rm sup}\{n \mid X^{n}\ne 0\} \mid Z\simeq X\in \C_{\sqsubset}^\mathrm{I}(R)\}$ is the \textit{injective dimension};
\item $\Gdim_R Y:={\rm inf}\{{\rm sup}\{n \mid X^{-n}\ne 0\} \mid Y\simeq X\in \G_{\sqsupset}(R)\}$ is the \textit{G-dimension};
\item $\Gpd_R Y:={\rm inf}\{{\rm sup}\{n \mid X^{-n}\ne 0\} \mid Y\simeq X\in \C_{\sqsupset}^\mathrm{GP}(R)\}$ is the \textit{Gorenstein projective dimension};
\item $\Gfd_R Y:={\rm inf}\{{\rm sup}\{n \mid X^{-n}\ne 0\} \mid Y\simeq X\in \C_{\sqsupset}^\mathrm{GF}(R)\}$ is the \textit{Gorenstein flat dimension};
\item $\Gid_R Z:={\rm inf}\{{\rm sup}\{n \mid X^{n}\ne 0\} \mid Z\simeq X\in \C_{\sqsubset}^\mathrm{GI}(R)\}$ is the \textit{Gorenstein injective dimension}.
\end{itemize}
When $Y$ and $Z$ are $R$-modules and each homological dimension is zero, it is a projective, flat, injective, totally reflexive, Gorenstein projective, Gorenstein flat, or Gorenstein injective module, respectively.

(5) In this paper, there are several assertions and arguments that hold for both the projective dimension and the G-dimension.
In such cases, we collectively denote $\pd$ and $\Gdim$ by ``$\Hdim$''.
Depending on whether $\Hdim$ is treated as the projective dimension or the G-dimension, we denote the injective dimension or the Gorenstein injective dimension by ``$\Hid$'', respectively.

(6) Suppose that $R$ is a homomorphic image of a finite-dimensional Gorenstein ring.
Then there exists a surjective ring homomorphism from $S$ to $R$, where $S$ is a Gorenstein ring of dimension $d=\dim R$.
Let $I=(0\to I^0\to \cdots \to I^d\to 0)$ be a minimal injective resolution of $S$-module $S$.
The \textit{dualizing complex} of $R$ is defined as $D:=\Hom_S(R,I)$.
It is easy to see that 
$$
D^i\cong \bigoplus_{\substack{\p\in\spec R \\ \dim R/\p=d-i}} E_R(R/\p), \\ 
$$
for all $0\le i\le d$.
(Unless otherwise specified, we assume that the dualizing complex takes this form.)
The module $\omega:=H^0(D)$ is called a \textit{canonical module} of $R$.
If $R$ is Cohen--Macaulay, we have $\omega\simeq D$.
\end{dfn}

We note the following well-known facts as a remark, and use them hereafter without further mention.

\begin{rem}\label{Well-known fact}
Let $R$ be a ring, and $W$ a complex of $R$-modules.

(1) Let $0\to X\to Y\to Z\to 0$ be a short exact sequence of complexes of $R$-modules.
If either $W$ or $Z$ is a complex of flat modules, then $0\to X\otimes_R W\to Y\otimes_R W\to Z\otimes_R W\to 0$ is exact; see \cite[Example 2.4.17 and Corollary 5.4.3]{CFH} for instance.

(2) Suppose that $R$ admits a dualizing complex $D$ and that $W$ is quasi-isomorphic to a bounded complex of $R$-modules.
It follows from \cite[1.3, Theorems 4.1 and 4.4]{CFrH} that $W$ has finite (Gorenstein) flat dimension if and only if $W\otimes_R^\L D$ has finite (Gorenstein) injective dimension, and that $W$ has finite (Gorenstein) injective dimension if and only if $\RHom_R(D,W)$ has finite (Gorenstein) projective dimension.
Also, as in \cite[Theorem 4.4]{CFrH}, since the finiteness of the Gorenstein flat dimension and that of the Gorenstein projective dimension are equivalent.
Moreover, if all homologies of $W$ are finitely generated, then $\pd_R W=\fd_R W$ and $\Gdim_R W=\Gpd_R W=\Gfd_R W$ hold.

(3) If $W$ has finite projective dimension or finite G-dimension, then so does $W_\p$ for any $\p\in\spec R$.
Moreover, if $W$ is a finitely generated $R$-module, then for each $\p\in\spec R$, the projective dimension and G-dimension of $W_\p$ and $W$ coincide with $\depth R_\p-\depth W_\p$ and $\Rfd_R W$, respectively; see \cite[Theorem 1.3.3]{BH} and \cite[Theorem (2.3.13)]{C}.
In the same case as in (2), by using the duality observed there, we see that the finiteness of the (Gorenstein) injective dimension is also preserved under localization.

%() Let $R$ be local and $M$ a finitely generated $R$-module of finite G-dimension. Then $\Gdim_R M=\depth R-\depth M$ holds by \cite[Theorem (1.4.8)]{C}. In particular, if $W$ is an exact complex of finitely generated free $R$-modules 
%which is not bounded to the right   (非有界性が不要なことは、右端が全射なのでok)
%and the cokernel of some differential map has finite G-dimension, then all cokernels have finite G-dimension and thus they are totally reflexive, which means that the $R$-dual of $W$ is exact; see \cite[Theorem (1.2.7)]{C} for instance. It follows from \cite[Lemma (5.1.10)]{C} that $W\otimes_R I$ is exact for any injective $R$-module $I$.

%() Let $R$ be a Cohen--Macaulay local ring of dimension $d$ with canonical module and $M$ an $R$-module of finite Gorenstein flat dimension. Then $\Gfd_R M\le d$ holds by \cite[(5.2.15)]{C}. In particular, if $W$ is an exact complex of flat $R$-modules and the cokernel of some differential map has finite Gorenstein flat dimension, then all cokernels have finite Gorenstein flat dimension and thus they are Gorenstein flat; see \cite[(5.2.14)(v)]{C} for instance. By \cite[(5.2.14)(iv)]{C}, $W\otimes_R I$ is exact for any injective $R$-module $I$.
\end{rem}

Holanda, Jorge-P\'{e}rez, and Mendoza-Rubio \cite{HJM} suggested the following questions for local rings.

\begin{ques}\cite[See Section 6]{HJM}\label{HJM Section 6}
Let $R$ be a local ring of depth $t$, and let $M$ and $N$ be nonzero finitely generated $R$-modules with $\pd_R (\Ext^i_R(M,N))<\infty$ for all $0\le i\le t$. 
Is the condition $\pd_R M<\infty$ equivalent to $\pd_R N<\infty$?
\end{ques}

\begin{ques}\cite[Question 6.1]{HJM}\label{HJM Q6.1}
Let $R$ be a local ring of depth $t$, and let $M$ and $N$ be nonzero finitely generated $R$-modules with $\pd_R N<\infty$. Do the following hold?
\begin{enumerate}[\rm(1)]
\item If $\Hdim_R (\Ext^i_R(M,N))<\infty$ for all $0\le i\le t$, then $\Hdim_R M<\infty$.
\item If $\Hdim_R (\Hom_R(M,N))<\infty$ and $\Ext^i_R(M,N)=0$ for all $1\le i\le t$, then $\Hdim_R M=0$.
\end{enumerate}
\end{ques}

\begin{ques}\cite[Question 6.9]{HJM}\label{HJM Q6.9}
Let $R$ be a local ring of depth $t$, and let $M$ and $N$ be nonzero finitely generated $R$-modules with $\pd_R (\Ext^i_R(M,N))<\infty$ for all $0\le i\le t$. 
Is the condition $\id_R M<\infty$ equivalent to $\id_R N<\infty$?
\end{ques}

One may also consider formulating the hypothesis of the question as $0\le i\le t-\depth M$ instead of $0\le i\le t$.
The reason they raised these questions is that they proved Question \ref{HJM Q6.1} in the affirmative when $R$ is Cohen--Macaulay.
Even for a general $R$, it is given in \cite{DHM, HJM} that Question \ref{HJM Q6.1} holds when $N=R$.
Furthermore, Holanda, Jorge-P\'{e}rez, and Mendoza-Rubio \cite{HJM}, focusing on the equivalence of the finiteness of the projective dimensions of $M$ and $N$ under the condition $\pd_R (\Ext^i_R(M,N))<\infty$ for all $0\le i\le t$, have resolved Question \ref{HJM Section 6} in several cases; see \cite[Corollary 5.9, Proposition 6.2]{HJM}.
Question \ref{HJM Q6.9} was posed dually to Question \ref{HJM Section 6}, in light of the quasi-isomorphism in \ref{underlying methods}(3). 
However, it is shown by \cite[Theorems 3.3 and 3.6]{HJM} that the difficulty of this equivalence resides in Question \ref{HJM Q6.1}(1) and in the following Question \ref{update question}(1).

\begin{ques}\label{update question}
Let $R$ be a local ring of depth $t$, and let $M$ and $N$ be nonzero finitely generated $R$-modules with $\id_R M<\infty$. Do the following hold?
\begin{enumerate}[\rm(1)]
\item If $\Hdim_R (\Ext^i_R(M,N))<\infty$ for all $0\le i\le t$, then $\Hid_R N<\infty$.
\item If $\Hdim_R (\Hom_R(M,N))<\infty$ and $\Ext^i_R(M,N)=0$ for all $1\le i\le t$, $\Hid_R N<\infty$ and $\depth M=t$.
\end{enumerate}
\end{ques}

Since a necessary and sufficient condition for $\Hdim_R M=0$ is that $\Hdim_R M<\infty$ and $\depth M=t$, and taking into account that depth is preserved under Foxby equivalence, it is seen that Question \ref{update question} is the dual version of Question \ref{HJM Q6.1}.
In the next section, we essentially resolve all of these questions (that is, by slightly adjusting the degrees of the assumed Ext modules).

%\begin{rem}
%Let $R$ be a local ring of depth $t$, and let $M$ and $N$ be nonzero finitely generated $R$-modules. It is worth noting that under the assumption $\pd_R(\RHom_R(M,N))<\infty$, $M$ has finite projective dimension if and only if so does $N$. The ``only if'' part is \cite[5.6 or 5.8]{ISW}. To prove the ``if'' part, we may assume that $R$ is complete, and hence it admits a dualizing complex $D$. If $N$ has finite projective dimension, there is a quasi-isomorphism $$ \RHom_R(M,N)\simeq \RHom_R(M,\RHom_R(D,N\otimes_R^\L D))\simeq \RHom_R(M\otimes_R^\L D,N\otimes_R^\L D) $$

%Question \ref{HJM Section 6} is similar to this fact. Indeed, if $M$ and $\Ext^i_R(M,N)$ have finite projective dimension for all $0\le i\le t-\depth M$, then so does $\RHom_R(M,N)$.
%\end{rem}}

\section{Main results}

In this section, we investigate the equivalence of the finiteness of homological dimensions for pairs of finitely generated modules and their Ext modules, and prove the main results of this paper.
To prove the main results of this paper, we prepare several basic lemmas.

\begin{lem}\label{fpd dualizing ext vanishing}
Let $(R,\m)$ be a local ring, $M$ and $N$ finitely generated $R$-modules, and $D$ a dualizing complex of $R$.
\begin{enumerate}[\rm(1)]
\item If $\pd_R N<\infty$, then $H^i(\RHom_R(M, N\otimes_R^\L D))=0$ for all integers $i>\dim R-\depth M$.
\item If $\id_R M<\infty$, then $H^i(\RHom_R(\RHom_R(D,M), N))=0$ for all integers $i>\depth R-\depth M$.
\item If $\Gdim_R M<\infty$ and $\pd_R N<\infty$, then $\Ext_R^i(M,N)=0$ for all integers $i>\depth R-\depth M$.
\item If $\id_R M<\infty$ and $\Gid_R N<\infty$, then $\Ext_R^i(M,N)=0$ for all integers $i>\depth R-\depth M$.
\end{enumerate}
\end{lem}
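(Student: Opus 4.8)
The plan is, in each part, to reduce the asserted high-degree vanishing to an estimate on depth. For the two statements (3) and (4), which are phrased purely in terms of $\Ext$, I would bound $\sup\RHom_R(M,N)$ directly; for (1) and (2), which involve a dualizing complex, I would first move $D$ to the other side by adjunction and then invoke local duality in (1), and the Auslander--Buchsbaum equality for complexes together with the invariance of depth under Foxby equivalence in (2). The only real point in every case is to obtain the \emph{sharp} degree bound — $\dim R-\depth M$ in (1) and $\depth R-\depth M$ in (2)--(4) — rather than the crude bound coming from $\pd$, $\id$, or the amplitude of $D$; all the work goes into the depth bookkeeping that achieves this refinement. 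Throughout I write $N^\vee:=\RHom_R(N,R)$ and use that a bounded complex with finitely generated cohomology and finite projective dimension is perfect, so the biduality and tensor-evaluation isomorphisms apply to it.

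Statement (3) is immediate and uses neither $D$ nor the hypothesis $\pd_R N<\infty$: since $\Gdim_R M<\infty$ the module $M$ has a resolution $0\to G_g\to\cdots\to G_0\to M\to 0$ by totally reflexive modules with $g=\Gdim_R M$, so $\Ext^i_R(M,N)=H^i(\Hom_R(G_\bullet,N))=0$ for $i>g$, and $g=\Gdim_R M=\depth R-\depth M$ by the Auslander--Bridger formula. For (2), the hypothesis $\id_R M<\infty$ places $M$ in the Bass class, so $C:=\RHom_R(D,M)$ is a bounded complex with finitely generated cohomology, has finite projective dimension, and satisfies $\depth_R C=\depth_R M$ because depth is a Foxby invariant. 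Choosing a semiprojective representative of $C$ consisting of finitely generated free modules and vanishing in cohomological degrees below $-\pd_R C$, and using that $N$ is concentrated in degree $0$, one sees that $H^i(\RHom_R(C,N))=0$ for $i>\pd_R C$; and $\pd_R C=\depth R-\depth_R C=\depth R-\depth M$ by Auslander--Buchsbaum for complexes, which is the claim.

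For (4) I would combine the mechanism of (2) with a tensor-evaluation isomorphism. Both hypotheses put $M$ and $N$ in the Bass class $\mathcal{B}(R)$ (a finitely generated module of finite Gorenstein injective dimension lies in $\mathcal{B}(R)$ because $R$ admits a dualizing complex), so $M\simeq D\otimes_R^\L\RHom_R(D,M)$ and hence, by adjunction, $\RHom_R(M,N)\simeq\RHom_R(M',N')$, where $M':=\RHom_R(D,M)$ has finite projective dimension and $N':=\RHom_R(D,N)$ is a bounded complex with finitely generated cohomology. As $M'$ is perfect, $\RHom_R(M',N')\simeq\RHom_R(M',R)\otimes_R^\L N'$, whence $\sup\RHom_R(M,N)\le\pd_R M'+\sup N'$. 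Exactly as in (2), $\pd_R M'=\depth R-\depth_R M'=\depth R-\depth M$; and $\sup N'\le 0$ since $N\simeq D\otimes_R^\L N'$ forces $0=\sup N=\sup D+\sup N'$ while $\sup D=\dim R-\depth R\ge 0$ (alternatively, $\id_R M<\infty$ forces $R$ to be Cohen--Macaulay, so $D\simeq\omega$ and $\sup D=0$). Therefore $\Ext^i_R(M,N)=0$ for $i>\depth R-\depth M$.

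Finally, (1) is the statement that genuinely needs the dualizing complex on the tensor side. Since $\pd_R N<\infty$, the module $N$ is perfect, so $N\otimes_R^\L D\simeq\RHom_R(N^\vee,D)$, and $\otimes$--$\Hom$ adjunction gives $\RHom_R(M,N\otimes_R^\L D)\simeq\RHom_R(C,D)$ with $C:=M\otimes_R^\L N^\vee$, a bounded complex with finitely generated cohomology. By local duality, $H^i(\RHom_R(C,D))$ is the Matlis dual of the local cohomology module $H^{\dim R-i}_{\m}(C)$, so it vanishes whenever $\dim R-i<\depth_R C$; it thus suffices to show $\depth_R C\ge\depth M$. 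Representing $N^\vee$ by $\Hom_R(F,R)$ for $F$ a minimal free resolution of $N$ — a bounded complex of free modules concentrated in cohomological degrees $0,\dots,\pd_R N$ — one sees that $C$ is represented by a complex whose terms are finite direct sums of copies of $M$ placed in cohomological degrees $0,\dots,\pd_R N$; applying $\mathbf{R}\Gamma_{\m}$ to this representative (equivalently, using subadditivity of depth along the triangles of its stupid filtration, whose subquotients are finite direct sums of copies of $M$ shifted into degrees $\ge 0$) shows $H^j_{\m}(C)=0$ for $j<\depth M$, i.e. $\depth_R C\ge\depth M$, giving $H^i(\RHom_R(M,N\otimes_R^\L D))=0$ for $i>\dim R-\depth M$. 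I expect this last depth estimate — and keeping the shift conventions straight so that the local cohomology of $C$ really starts in degree $\ge\depth M$ — to be the most delicate point of the argument.
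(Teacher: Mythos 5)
Your argument for parts (1), (2), and (4) is correct and takes a genuinely different route from the paper. The paper treats (1) and (3) together by induction on $\pd_R N$, passing to a syzygy $\Omega N$, and treats (2) and (4) together by induction on $\depth R-\depth M$, passing to $K$ in a short exact sequence $0\to K\to\omega^{\oplus n}\to M\to 0$; in each case the base step is a direct citation from Christensen's book. You instead work entirely in the derived category: in (1) you rewrite $N\otimes_R^{\L}D\simeq\RHom_R(N^\vee,D)$, apply $\Hom$--tensor adjunction, and then invoke local duality, reducing everything to the depth bound $\depth_R(M\otimes_R^{\L}N^\vee)\ge\depth M$, which you extract from a minimal free resolution of $N$ via the stupid filtration; in (2) and (4) you pass to $\RHom_R(D,-)$ and use the Auslander--Buchsbaum equality for complexes together with $\sup\RHom_R(D,N)\le 0$. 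Both routes are valid. The paper's is more elementary and self-contained, while yours is shorter, replaces the induction with explicit numerical identities ($\pd_R\RHom_R(D,M)=\depth R-\depth M$, etc.), and makes the real content of (1) — the depth bookkeeping for $M\otimes_R^{\L}N^\vee$ — visible rather than hidden inside an induction.

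Part (3), however, contains a genuine error. You claim it ``uses neither $D$ nor the hypothesis $\pd_R N<\infty$'' and then write $\Ext^i_R(M,N)=H^i(\Hom_R(G_\bullet,N))$ for a totally reflexive resolution $G_\bullet\to M$. That identification is \emph{not} automatic: it requires each $G_j$ to be $\Hom_R(-,N)$-acyclic, i.e.\ $\Ext^{>0}_R(G_j,N)=0$. A totally reflexive module $G$ satisfies $\Ext^{>0}_R(G,P)=0$ only for $P$ projective; by dimension shifting this extends to $\Ext^{>0}_R(G,N)=0$ whenever $\pd_R N<\infty$, but not for arbitrary $N$. Concretely, take $R=k[x]/(x^2)$ and $M=N=k$: then $\Gdim_R k=0$ and $\depth R-\depth k=0$, yet $\Ext^i_R(k,k)\cong k\ne 0$ for all $i\ge 0$, so the conclusion of (3) fails. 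Thus $\pd_R N<\infty$ is precisely what makes the step you claim to be free of it work; with that hypothesis restored the argument is correct, but the claim that it is superfluous should be deleted.
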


\begin{proof}
We prove (1) and (3).
We use induction on $\pd_R N$.
If $\pd_R N=0$, the claim is obvious; see \cite[(1.2.7)]{C}.
Suppose $\pd_R N>0$. 
We consider a short exact sequence $0\to \Omega N\to F\to N\to 0$, where $F$ is a finitely generated free $R$-module.
This implies exact sequences 
\begin{align*}
H^i(\RHom_R(M, F\otimes_R^\L D))& \to H^i(\RHom_R(M, N\otimes_R^\L D))\to H^{i+1}(\RHom_R(M, \Omega N\otimes_R^\L D)) \ {\rm and}\  \\
&\Ext_R^i(M, F) \to \Ext_R^i(M, N) \to \Ext_R^{i+1}(M, \Omega N)
\end{align*}
for any integer $i$.
The claim follows from the induction hypothesis.

We show (2) and (4).
We may assume that $M$ is nonzero and hence $R$ is Cohen--Macaulay; see \cite[Corollary 9.6.2, Remark 9.6.4(a)]{BH}.
Let $\omega$ be the canonical module of $R$.
We use induction on $r=\depth R-\depth M$.
If $r=0$, the claim is obvious since $M$ is isomorphic to a finite direct sum of copies of $\omega$; see \cite[(3.4.9) and (6.2.5)]{C} for instance.
Suppose $r>0$. 
Take a short exact sequence $0\to K\to L \to M\to 0$, where $L$ is isomorphic to a finite direct sum of copies of $\omega$; see \cite[Exercise 3.3.28]{BH}.
Note that $\dim R-\depth K=r-1$.
The above short exact sequence implies exact sequences 
\begin{align*}
H^{i-1}(\RHom_R(\RHom_R(\omega,K), N)) &\! \to\! H^i(\RHom_R(\RHom_R(\omega,M), N))\!\to\! H^i(\RHom_R(\RHom_R(\omega,L), N)) \\
 \ {\rm and}\ &\Ext_R^{i-1}(K,N) \to \Ext_R^i(M, N) \to \Ext_R^i(L, N)
\end{align*}
for any integer $i$.
The claim follows from the induction hypothesis.
\end{proof}

\begin{lem}\label{flat cpx bounded}
Let $R$ be a ring, $X$ a complex of flat $R$-modules, and $\Phi: Y\to Z$ a quasi-isomorphism of bounded complexes of $R$-modules.
Then $\Phi\otimes_R X: Y\otimes_R X\to Z\otimes_R X$ is a quasi-isomorphism.
\end{lem}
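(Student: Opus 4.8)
The plan is to pass to the mapping cone of $\Phi$ and thereby reduce the statement to the following general dévissage fact: a \emph{bounded} acyclic complex of $R$-modules stays acyclic after tensoring with an arbitrary complex of flat $R$-modules.

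First I would set $C:=\operatorname{cone}(\Phi)$, the mapping cone of $\Phi$. Since $Y$ and $Z$ are bounded, $C$ is bounded, and since $\Phi$ is a quasi-isomorphism, $C$ is acyclic. There is a natural isomorphism of complexes $\operatorname{cone}(\Phi)\otimes_R X\cong\operatorname{cone}(\Phi\otimes_R X)$ (a bookkeeping identification of degrees; any sign discrepancy is itself realized by an isomorphism of complexes, so it does not affect acyclicity). As a morphism of complexes is a quasi-isomorphism exactly when its mapping cone is acyclic, it suffices to prove: \emph{if $C$ is a bounded acyclic complex of $R$-modules and $X$ is any complex of flat $R$-modules, then $C\otimes_R X$ is acyclic.}

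To prove this I would induct on the number of nonzero components of $C$. If this number is $0$, then $C=0$ and there is nothing to show. If $C\ne 0$, then since $C$ is acyclic it must have at least two nonzero components; let $a$ be the least integer with $C^a\ne 0$, so that acyclicity at $C^a$ forces $d^a\colon C^a\to C^{a+1}$ to be injective with image $\image d^a\subseteq C^{a+1}$. Let $P\subseteq C$ be the subcomplex with $P^a=C^a$, $P^{a+1}=\image d^a$ and $P^i=0$ otherwise (this is a subcomplex because $d^{a+1}d^a=0$); since $d^a$ restricts to an isomorphism $C^a\xrightarrow{\cong}\image d^a$, the complex $P$ is contractible. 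The quotient $Q:=C/P$ is again bounded and acyclic and has strictly fewer nonzero components, since $Q^a=0$. Now tensor the short exact sequence of complexes $0\to P\to C\to Q\to 0$ with $X$; because $X$ is a complex of flat modules, Remark \ref{Well-known fact}(1) gives that $0\to P\otimes_R X\to C\otimes_R X\to Q\otimes_R X\to 0$ is still a short exact sequence of complexes, and the associated long exact cohomology sequence yields, for each $n$, an exact sequence $H^n(P\otimes_R X)\to H^n(C\otimes_R X)\to H^n(Q\otimes_R X)$. The left term vanishes because $P$ is contractible, hence so is $P\otimes_R X$; the right term vanishes by the induction hypothesis. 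Therefore $H^n(C\otimes_R X)=0$ for all $n$, completing the induction and the proof.

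The argument is essentially formal. The only place the hypotheses are genuinely used is the boundedness of $Y$ and $Z$ (equivalently of $C=\operatorname{cone}(\Phi)$), which is what makes the induction terminate; for unbounded $X$ one should not expect $-\otimes_R X$ to preserve quasi-isomorphisms without a $K$-flatness hypothesis. The routine points I am deliberately not spelling out are the compatibility of the mapping cone with $-\otimes_R X$ and the standard fact that tensoring a contractible complex with any complex yields a contractible (hence acyclic) complex.
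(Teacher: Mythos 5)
Your proof is correct, but it takes a genuinely different route from the paper's. You reduce to the mapping cone $C$ of $\Phi$ (a bounded acyclic complex) and run a d\'evissage: you split off a contractible two-term subcomplex $P$ at the left end, observe that $Q=C/P$ is shorter, bounded and acyclic, tensor the short exact sequence $0\to P\to C\to Q\to 0$ with $X$ using Remark \ref{Well-known fact}(1), and conclude by induction on the number of nonzero terms. This gives a self-contained proof, with no appeal to any external reference, of the underlying fact that a bounded acyclic complex stays acyclic after $-\otimes_R X$ whenever $X$ has flat terms. The paper instead argues degreewise: it fixes $m$, observes that the three consecutive terms of $Y\otimes_R X$ and $Z\otimes_R X$ that compute $H^m$ only involve finitely many $X^j$ (because $Y$ and $Z$ are bounded), replaces $X$ by a bounded truncation $W$, and then cites \cite[(A.4.1)]{C} for the standard fact that tensoring a quasi-isomorphism with a bounded complex of flat modules preserves quasi-isomorphism. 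In effect the paper trades your induction for a truncation plus a citation. Both proofs have the same scope; your mapping-cone/d\'evissage argument is a bit more elementary and avoids the reference, while the paper's truncation argument is shorter on the page and pushes the real work into the cited lemma. Either is acceptable, and each would be a reasonable alternative to the other.
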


\begin{proof}
We prove that $H^m(\Phi\otimes_R X):H^m(Y\otimes_R X)\to H^m(Z\otimes_R X)$ is an isomorphism for any integer $m$.
Since $Y$ and $Z$ are bounded, there exists an integer $n\ge 0$ such that $Y^i=0=Z^i$ whenever $i<-n$ or $n<i$.
For any integer $m$, $H^m(Y\otimes_R X)$ is the cohomology of a complex of the following form:
$$
\bigoplus_{i=-n}^n Y^i\otimes_R X^{m-1-i} \to \bigoplus_{i=-n}^n Y^i\otimes_R X^{m-i} \to \bigoplus_{i=-n}^n Y^i\otimes_R X^{m+1-i}.
$$
The same holds for $H^m(Z\otimes_R X)$.
Putting $W:=(0\to X^{m-n-1}\to X^{m-n} \to \cdots \to X^{m+n}\to X^{m+n+1}\to 0)$,  it follows that $H^m(\Phi\otimes_R X)=H^m(\Phi\otimes_R W)$.
Since $W$ is a bounded complex of flat $R$-modules, $\Phi\otimes_R W$ is a quasi-isomorphism; see \cite[(A.4.1)]{C} for instance.
\end{proof}

The following theorem is one of the most important results in this paper.
The approach to the proof takes inspiration from the techniques in \cite[Corollary 3.7]{HJM}.
However, when $R$ is not Cohen--Macaulay, it is necessary to replace the canonical module with the dualizing complex, making it difficult to compute each Ext module exactly.
Our main idea is to avoid this complexity by considering the case where the homology has finite length as the base case.

\begin{thm}\label{fpd homology}
Let $(R,\m)$ be a local ring, $M$ and $N$ nonzero finitely generated $R$-modules with $\pd_R N<\infty$, and $r\ge\depth R-\depth M$ a nonnegative integer. 
Suppose that $\Hdim_R\Ext^i_R(M,N)<\infty$ for all $0\le i\le r$, and $\Ext^j_R(M,N)$ has finite length for any $j>r$. 
Then $\Hdim_R M<\infty$.
\end{thm}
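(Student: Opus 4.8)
Throughout we may assume $R$ is complete, hence admits a dualizing complex $D$ of the stated form; indeed all hypotheses and the conclusion are unaffected by $-\otimes_R\hat R$, which is faithfully flat, commutes with $\Ext$, and preserves $\depth$, finite length, and finiteness of $\pd$ and of $\Gdim$. Put $d=\dim R$, $t=\depth R$, and $C=\RHom_R(M,N)$, so $H^i(C)=\Ext^i_R(M,N)$; note $C\not\simeq 0$ (e.g. $\RHom_R(M,R)\not\simeq 0$, and $C\simeq\RHom_R(M,R)\otimes_R^\L N$ with $N\ne 0$ of finite projective dimension), so $\sup C\ge 0$. The plan is: first show $C$ is homologically bounded; then show $\sup C\le t-\depth M\le r$, so that $\Ext^i_R(M,N)=0$ for $i>r$ and $C\simeq\tau^{\le r}C$ has finite $\Hdim$; and finally descend from $\Hdim_R C<\infty$ to $\Hdim_R M<\infty$.

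Since $\pd_R N<\infty$, resolving $N$ by a bounded complex of finitely generated free modules and applying a termwise tensor-evaluation gives $C\otimes_R^\L D\simeq\RHom_R(M,N\otimes_R^\L D)$. As $N\otimes_R^\L D$ has finite injective dimension (Remark \ref{Well-known fact}(2)), this complex is homologically bounded, and by Lemma \ref{fpd dualizing ext vanishing}(1) its homology vanishes in degrees $>d-\depth M$. Let $\tau^{\le r}C$ and $\tau^{>r}C$ be the soft truncations, with homology $H^i(C)$ in degrees $\le r$, resp.\ $>r$. The homologies of $\tau^{\le r}C$ are $\Ext^0_R(M,N),\dots,\Ext^r_R(M,N)$, each of finite $\Hdim$ by hypothesis, so $\tau^{\le r}C$ has finite $\Hdim$; consequently $\tau^{\le r}C\otimes_R^\L D$ is again homologically bounded — when $\Hdim=\pd$ because $\tau^{\le r}C$ is quasi-isomorphic to a bounded complex of free modules, and when $\Hdim=\Gdim$ because $\tau^{\le r}C$ lies in the Auslander class of $D$, by the equivalences recalled in Remark \ref{Well-known fact}(2). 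From the triangle $\tau^{\le r}C\otimes_R^\L D\to C\otimes_R^\L D\to\tau^{>r}C\otimes_R^\L D\to$ it follows that $\tau^{>r}C\otimes_R^\L D$ is homologically bounded as well. This is the one place where one needs that $\Ext^i_R(M,N)$ has finite $\Hdim$ for $i\le r$ rather than merely finite length: without it $\tau^{>r}C\otimes_R^\L D$ need not be bounded, since the derived tensor product carries no lower amplitude bound.

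Because $\Ext^j_R(M,N)$ has finite length for $j>r$, the complex $\tau^{>r}C$ is supported only at $\m$, so $\tau^{>r}C\otimes_R^\L D\simeq\tau^{>r}C\otimes_R^\L\mathbf{R}\Gamma_\m(D)\simeq\bigl(\tau^{>r}C\otimes_R^\L E_R(R/\m)\bigr)[-d]$, using that $\mathbf{R}\Gamma_\m(D)\simeq E_R(R/\m)[-d]$ for $D$ in the given form. Applying Matlis duality (here $R$ is complete, so $\Hom_R(E_R(R/\m),E_R(R/\m))=R$) transforms this, up to shift, into $\RHom_R(\tau^{>r}C,R)$; hence $\RHom_R(\tau^{>r}C,R)$ is homologically bounded. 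Resolving the homologically bounded-below complex $\tau^{>r}C$ by a minimal complex $P$ of finitely generated free modules and using that $\Hom_R(P,R)\simeq\RHom_R(\tau^{>r}C,R)$ is again minimal, minimality forces $P$ to have only finitely many nonzero terms; thus $\tau^{>r}C$, and so $C$, is homologically bounded. Now set $s=\sup C$. Since $D$ has $\inf D=0$ and $\operatorname{amp} D=d-t$, we have $\sup D=d-t$, so the top homology of $C\otimes_R^\L D$ is $\Ext^s_R(M,N)\otimes_R H^{d-t}(D)$, which is nonzero by Nakayama's lemma; Lemma \ref{fpd dualizing ext vanishing}(1) then gives $s+(d-t)\le d-\depth M$, i.e.\ $s\le t-\depth M\le r$ (if $\depth M>\depth R$ this already contradicts $\sup C\ge 0$, so in that case the hypotheses cannot hold and there is nothing to prove). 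Hence $\Ext^i_R(M,N)=0$ for all $i>r$, so $C\simeq\tau^{\le r}C$, which has finite $\Hdim$ as noted above.

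It remains to deduce $\Hdim_R M<\infty$ from $\Hdim_R\RHom_R(M,N)<\infty$ together with $\pd_R N<\infty$. Writing $\RHom_R(M,N)\simeq\RHom_R(M,R)\otimes_R^\L N$ by tensor-evaluation and using that $N\otimes_R^\L k$ is a bounded, nonzero complex of $k$-vector spaces, one gets that $\RHom_R(M,R)$ has finite $\Hdim$; a standard biduality argument — $\RHom_R(M,R)$ is then perfect, resp.\ of finite $\Gdim$, so the map $M\to\RHom_R(\RHom_R(M,R),R)$ is an isomorphism — yields $\Hdim_R M<\infty$ (alternatively one may invoke \cite[Theorems 3.3 and 3.6]{HJM}). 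The main obstacle is the middle step, i.e.\ showing that the finite-length tail $\tau^{>r}C$ is acyclic: a naive degree count fails, and it is precisely the finiteness of $\Hdim_R\Ext^i_R(M,N)$ for $i\le r$, converted via $-\otimes_R^\L D$ and Matlis duality into boundedness of $\RHom_R(\tau^{>r}C,R)$, that forces boundedness of $C$, after which Lemma \ref{fpd dualizing ext vanishing}(1) applied to the top homology of $C\otimes_R^\L D$ finishes the argument.
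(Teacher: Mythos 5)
Your overall strategy is a genuinely different route from the paper's: you try first to show that $\RHom_R(M,N)$ is homologically bounded (via the soft truncation triangle, the reduction $\tau^{>r}C\otimes^\L D\simeq(\tau^{>r}C\otimes^\L E_R(R/\m))[-d]$, and Matlis duality), then bound $\sup\RHom_R(M,N)$ by Nakayama plus Lemma~\ref{fpd dualizing ext vanishing}(1), and finally descend to $\Hdim_R M<\infty$. The paper instead never proves boundedness of $\RHom_R(M,N)$ in advance: it builds the explicit mapping cone $C$ (a complex of finitely generated free modules bounded to the left, quasi-isomorphic to your $\tau^{>r}C$), proves $C\otimes_R D^i\simeq 0$ for $i<d$ using that the low-degree cokernels of $C$ are totally reflexive and localize well (via \cite[Theorem (1.4.8), Lemma (5.1.10)]{C}), reduces to $C\otimes_R D^d=C\otimes_R E_R(R/\m)$, and kills that by Ext-vanishing. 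Several of your intermediate reductions (the $\mathbf{R}\Gamma_\m$ computation, the use of the Auslander class, the top-homology Nakayama argument) are correct and are a clean repackaging of what makes the paper's computation work.

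However, there is a genuine gap at the pivotal step. You argue that since $\RHom_R(\tau^{>r}C,R)$ is homologically bounded, ``minimality forces $P$ to have only finitely many nonzero terms,'' hence $\tau^{>r}C$ is bounded. This does not follow. First, $\tau^{>r}C$ is bounded to the \emph{left} (concentrated in cohomological degrees $>r$), and there is no canonical ``minimal free resolution'' of such a complex to which a termination-by-minimality argument applies. Second, and more fundamentally, the implication you want is false at the level you invoke it: a minimal complex of finitely generated free modules, bounded on one side, with homologically bounded homology, need not be bounded on the other side. The minimal free resolution of any module of infinite projective dimension is exactly such a complex — its homology is concentrated in a single degree yet the complex is unbounded, and there is no contradiction with minimality. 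Your Matlis-duality step correctly shows that $\RHom_R(\tau^{>r}C,R)$ is homologically bounded, but a homologically bounded object of $D^-_{\mathrm{fg}}(R)$ does not automatically give back a bounded $\tau^{>r}C$ by biduality against $R$; that biduality $X\simeq\RHom_R(\RHom_R(X,R),R)$ is precisely the \emph{finite G-dimension} condition, which is part of what you are trying to establish. The paper sidesteps the whole issue by never asking whether $\tau^{>r}C$ is bounded: it proves the acyclicity of the explicit mapping cone tensored with $D$ directly, which yields $\RHom_R(M,N\otimes_R^\L D)\simeq P\otimes_R D$ with $P$ of finite $\Hdim$, and then invokes \cite[Lemma 6.2.10]{CF} and \cite[Theorem 3.6(2)]{HJM} to conclude. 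To repair your argument you would need a separate proof that the finite-length tail $\tau^{>r}C$ vanishes (equivalently that $\tau^{>r}C\otimes^\L E_R(R/\m)\simeq 0$, not merely that it is bounded), and that is essentially what the paper's local and Ext-vanishing computations accomplish.
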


\begin{proof}
We may assume $R$ is complete.
Set $d=\dim R$ and $t=\depth R$.
Let $D$ be a dualizing complex of $R$, and let $F=(\cdots \to F_1\to F_0\to 0)$ and $G=(0\to G_n \to \cdots \to G_0\to 0)$ be minimal free resolutions of $M$ and $N$.
Note that $\Hom_R(F, G)=: H=(0\to H^{-n}\to H^{-n+1} \to \cdots)$ is a complex of finitely generated free $R$-modules.
By \cite[(A.2.10)]{C} or \cite[Theorem 4.5.10]{CFH}, there is a quasi-isomorphism 
$$
\RHom_R(M, N\otimes_R^\L D)\simeq \Hom_R(F, G\otimes_R D)\cong \Hom_R(F, G)\otimes_R D=H\otimes_R D.
$$
Noting that $N\otimes_R^\L D\ne 0$ and it has finite injective dimension, if $\Hid_R (H\otimes_R D)<\infty$, we get $\Hdim_R M<\infty$; see \cite[Lemma 6.2.10]{CF} and the proof of \cite[Theorem 3.6(2)]{HJM}.
We prove $\Hid_R (H\otimes_R D)<\infty$.

Take a homomorphism $\Phi: P\to H$ of complexes such that $H^i(\Phi):H^i(P) \to H^i(H)\cong\Ext^i_R(M,N)$ is an isomorphism for every $i\le r$, where $P=(\cdots\to P^{r-1}\to P^r\to 0)$ is a complex of finitely generated free $R$-modules.
Let $C$ be the mapping cone of $\Phi$.
The short exact sequence
\[
  \xymatrix@C=15pt@R=10pt
  {
    H= (\cdots  \ar[r] 
    & H^{i-1} \ar[r] \ar[d]
    & H^i \ar[r] \ar[d]
    & \cdots ) \\
    C= (\cdots  \ar[r] 
    & C^{i-1}=H^{i-1}\oplus P^{i} \ar[r] \ar[d]
    & C^i=H^i\oplus P^{i+1} \ar[r] \ar[d]
    & \cdots ) \\
    P[1]= (\cdots \ar[r]
    & P^{i} \ar[r] 
    & P^{i+1} \ar[r] 
    & \cdots ) \\
  }
\]
of complexes of finitely generated free $R$-modules induces a short exact sequence $0\to H\otimes_R D\to C\otimes_R D\to (P\otimes_R D)[1]\to 0$.
By assumption, we obtain $\Hdim_R P<\infty$ and thus $\Hid_R (P\otimes_R D)<\infty$.
If $C\otimes_R D\simeq 0$, then $H\otimes_R D$ is quasi-isomorphic to $P\otimes_R D$ and we reach $\Hid_R (H\otimes_R D)<\infty$.
In the remainder of the proof, we show that $H^i(C\otimes_R D)=0$ for all integers $i$.

As $H^j(D)=0=H^k(P)$ for all $j>d-t$ and $k>r$, we have $H^i(P\otimes_R D)=H^i(P\otimes_R^\L D)=0$ for any $i>d-t+r$.
It follows from Lemma \ref{fpd dualizing ext vanishing}(1) that $H^i(H\otimes_R D)\cong H^i(\RHom_R(M, N\otimes_R^\L D))=0$ for every $i>d-t+r$, which means that $H^i(C\otimes_R D)=0$ for any $i>d-t+r$.

We denote the differential map $C^i\to C^{i+1}$ by $f^i$.
By the construction of $\Phi$ and $C$, if $i>r$, then $H^i(C)\cong H^i(H)\cong \Ext^i_R(M,N)$ and it has finite length; otherwise $H^i(C)=0$.
For any non-maximal prime ideal $\p$ of $R$, $C_\p$ is exact.
Since the complex $H$ is bounded to the left, for $i\ll 0$, $f^i$ is equal to the differential map $P^{i+1}\to P^{i+2}$, and thus $\cok f^i$ is totally reflexive.
By this and \cite[Theorem (1.4.8)]{C}, for every integer $i$, $\cok f^i_\p$ has finite G-dimension and thus totally reflexive, which implies that the $R_\p$-dual of $C_\p$ is exact.
It follows from \cite[Lemma (5.1.10)]{C} that $C\otimes_R E(R/\p)=C_\p\otimes_{R_\p} E(R/\p)\simeq 0$.
We obtain $C\otimes_R D^i\simeq 0$ for $i\ne d$.
For all $0\le i\le d-1$, the natural short exact sequence 
\[
  \xymatrix@C=15pt@R=10pt
  {
D(i+1):=(0  \ar[r]
    & 0 \ar[r] \ar[d]
    & D^{i+1} \ar[r] \ar@{=}[d]
    & \cdots \ar[r]
    & D^{d-1} \ar[r] \ar@{=}[d]
    & D^d \ar[r] \ar@{=}[d]
    & 0 ) \\
D(i):= (0   \ar[r] 
    & D^i \ar[r] \ar@{=}[d]
    & D^{i+1} \ar[r] \ar[d]
    & \cdots \ar[r] 
    & D^{d-1} \ar[r] \ar[d]
    & D^d \ar[r] \ar[d]
    & 0 ) \\
(0  \ar[r] 
    & D^i \ar[r] 
    & 0 \ar[r]
    & \cdots \ar[r]
    & 0 \ar[r] 
    & 0 \ar[r]
    & 0 )  \\
  }
\]
implies $C\otimes_R D=C\otimes_R D(0)\simeq C\otimes_R D(1)\simeq \cdots \simeq C\otimes_R D(d)$, and hence $H^i(C\otimes_R D)\cong H^{i-d}(C\otimes_R D^d)$ for any integer $i$.
Since $H^i(C)=0$ for any $i\le r$, $(\cdots \to C^{r-1} \to C^{r}\to C^{r+1}\to 0)$ is a free resolutin of $\cok f^r$.
For $i\ll 0$, $\cok f^i$ is totally reflexive, and thus $\cok f^r$ has finite G-dimension.
It follows from Lemma \ref{fpd dualizing ext vanishing}(3) that $\Ext_R^i(\cok f^r, R)=0$ for every $i>t$.
We see that $(\cdots \to C^{r-t-1}\otimes_R D^d \to C^{r-t}\otimes_R D^d \to C^{r-t+1}\otimes_R D^d)$ is exact since its Matlis dual is exact.
Thus $H^i(C\otimes_R D)\cong H^{i-d}(C\otimes_R D^d)=0$ for all $i\le d-t+r$.
\end{proof}

The following corollary is the most general statement of the results we have obtained.
Refer to Remark \ref{why localize} for the reason why conditions (1) and (2) need to be considered for each maximal ideal.

\begin{cor}\label{pd or Gdim cor}
Let $R$ be a ring, and let $M$ and $N$ be finitely generated $R$-modules.
Then $\Hdim_R M<\infty$ if for every maximal ideal $\m$ of $R$, at least one of the following holds:
\begin{enumerate}[\rm(1)]
\item $\Hdim_{R_\m} M_{\m}<\infty$
\item $N_\m\ne 0$, $\pd_{R_\m} N_\m<\infty$, and $\Hdim_{R_\m} \Ext^i_{R_\m}(M_\m,N_\m)<\infty$ for any $0\le i\le\Rfd_R M$.
\end{enumerate}
\end{cor}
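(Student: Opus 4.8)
The plan is to reduce the global statement to the local Theorem \ref{fpd homology} via a standard patching argument. First I would recall that a finitely generated module $M$ over a (not necessarily local) Noetherian ring $R$ has finite (Gorenstein) projective dimension if and only if $M_\m$ has finite (Gorenstein) projective dimension over $R_\m$ for every maximal ideal $\m$; this is the content of \cite[Theorem 1.3.3]{BH} for $\pd$, and of \cite[Theorem (2.3.13)]{C} (recalled in Remark \ref{Well-known fact}(3)) for $\Gdim$, together with the fact that, since $M$ is finitely generated, $\pd_R M = \sup_\m \pd_{R_\m} M_\m$ and likewise for $\Gdim$. So it suffices to prove $\Hdim_{R_\m} M_\m < \infty$ for each maximal ideal $\m$.

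Fix a maximal ideal $\m$. If alternative (1) holds at $\m$, there is nothing to do. Otherwise alternative (2) holds, and I would like to apply Theorem \ref{fpd homology} to the local ring $(R_\m, \m R_\m)$ and the pair of nonzero finitely generated modules $M_\m$, $N_\m$ with $\pd_{R_\m} N_\m < \infty$. The one genuinely technical point is matching the range of degrees: Theorem \ref{fpd homology} requires, for some $r \ge \depth R_\m - \depth M_\m$, that $\Hdim_{R_\m} \Ext^i_{R_\m}(M_\m, N_\m) < \infty$ for $0 \le i \le r$ and that $\Ext^j_{R_\m}(M_\m, N_\m)$ have finite length for $j > r$. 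Here I would take $r = \Rfd_R M$. Two facts make this work: first, by Remark \ref{Well-known fact}(3) we have $\Rfd_R M = \sup\{\depth R_\p - \depth M_\p\}$, so in particular $\Rfd_R M \ge \depth R_\m - \depth M_\m$, giving the inequality $r \ge \depth R_\m - \depth M_\m$ needed as a hypothesis. Second, since $\pd_{R_\m} N_\m < \infty$, Lemma \ref{fpd dualizing ext vanishing}(3) (or its $\pd$-version, which is classical) bounds the nonvanishing of $\Ext^i_{R_\m}(M_\m, N_\m)$; but more to the point, I want the finite-length condition for $j > r$, and this follows from the compatibility of Ext with localization: for any non-maximal prime $\p \subsetneq \m$ of $R$ contained in $\m$, $(\Ext^i_{R_\m}(M_\m, N_\m))_\p = \Ext^i_{R_\p}(M_\p, N_\p)$, and since $\pd_{R_\p} N_\p < \infty$ we may invoke the vanishing of Ext in degrees exceeding $\depth R_\p - \depth M_\p \le \Rfd_R M = r$. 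Hence for $j > r$ the module $\Ext^j_{R_\m}(M_\m, N_\m)$ is supported only at $\m R_\m$, and being finitely generated it has finite length. The hypotheses of Theorem \ref{fpd homology} are thus all met, and it yields $\Hdim_{R_\m} M_\m < \infty$.

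Putting the two cases together, $\Hdim_{R_\m} M_\m < \infty$ for every maximal ideal $\m$, and the patching argument of the first paragraph gives $\Hdim_R M < \infty$, completing the proof. The main obstacle is the degree bookkeeping in the previous paragraph: one must verify carefully that $\Rfd_R M$ is exactly the right cutoff — large enough to contain all the "global" primes governing nonvanishing of Ext (so the leftover Ext modules have finite length), yet such that the hypothesis on $\Hdim$ of the listed Ext modules in condition (2) matches the range $0 \le i \le r$ in Theorem \ref{fpd homology}. Everything else is a routine invocation of the localization behavior of $\pd$, $\Gdim$, and Ext already collected in Remark \ref{Well-known fact}.
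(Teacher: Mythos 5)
There is a genuine gap in the step establishing the finite-length condition. You assert that for a non-maximal prime $\p\subsetneq\m$, the condition $\pd_{R_\p}N_\p<\infty$ alone already yields $\Ext^j_{R_\p}(M_\p,N_\p)=0$ for $j>\depth R_\p-\depth M_\p$. This is false: the vanishing in Lemma \ref{fpd dualizing ext vanishing}(3) requires \emph{both} $\pd_{R_\p}N_\p<\infty$ \emph{and} $\Hdim_{R_\p}M_\p<\infty$, and the second of these is not available at an arbitrary non-maximal $\p$ — indeed it is close to what the whole corollary is trying to prove. A concrete counterexample to your claimed vanishing: over a non-Gorenstein Artinian local ring $R$, take $M=k$ and $N=R$; then $\pd_R N=0<\infty$ and $\depth R-\depth M=0$, but $\Ext^j_R(k,R)\ne0$ for all $j\ge0$. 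Thus a direct application of Theorem \ref{fpd homology} at each maximal ideal, with $r=\Rfd_R M$, does not go through: you cannot certify that $\Ext^j_{R_\m}(M_\m,N_\m)$ has finite length for $j>r$.

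The paper closes this gap by a Noetherian-induction argument that your proposal omits. Rather than working at a fixed maximal ideal, one considers the set $S$ of primes $\p$ at which all the localized data from hypothesis (2) persist but $\Hdim_{R_\p}M_\p=\infty$. If this set were nonempty, it would contain a minimal element $\p$. For any $\q\subsetneq\p$ the conditions $\pd_{R_\q}N_\q<\infty$ and $\Hdim_{R_\q}\Ext^i_{R_\q}(M_\q,N_\q)<\infty$ still hold by localization, and minimality of $\p$ forces $\q\notin S$, so either $N_\q=0$ or $\Hdim_{R_\q}M_\q<\infty$. \emph{That} is the missing input: combined with $\pd_{R_\q}N_\q<\infty$, Lemma \ref{fpd dualizing ext vanishing}(3) now applies at $\q$ and gives $\Ext^j_{R_\q}(M_\q,N_\q)=0$ for $j>\Rfd_R M$. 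Hence $\Ext^j_{R_\p}(M_\p,N_\p)$ is supported only at $\p R_\p$ and is of finite length, and Theorem \ref{fpd homology} applied at $\p$ gives $\Hdim_{R_\p}M_\p<\infty$, a contradiction. Your outer framing (local-to-global for $\Hdim$ and the choice $r=\Rfd_R M$ so that $r\ge\depth R_\m-\depth M_\m$) is correct, but without the minimal-counterexample device the crucial finite-length hypothesis of Theorem \ref{fpd homology} cannot be verified.
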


\begin{proof}
Assume that $\Hdim_{R_\m} M_\m=\infty$ for some maximal ideal $\m$ of $R$.
By assumption, $\m$ belongs to $S:=\{\p\in\spec R\mid \Hdim_{R_\p} M_\p=\infty, N_\p\ne 0, \pd_{R_\p} N_\p<\infty, \Hdim_{R_\p} \Ext^i_{R_\p}(M_\p,N_\p)<\infty$ for any $0\le i\le\Rfd_R M\}$, so we can take a minimal element $\p$ of $S$.
%We see that $r\ge \depth R_\p-\depth M_\p$, $\pd_{R_\p} N_\p<\infty$ and $\Hdim_R\Ext^i_{R_\p}(M_\p,N_\p)<\infty$ for any $0\le i\le r$.
Let $\q$ be a prime ideal of $R$ with $\q\subsetneq \p$.
Since $\pd_{R_\p} N_\p<\infty$ and $\Hdim_{R_\p} \Ext^i_{R_\p}(M_\p,N_\p)<\infty$ for any $0\le i\le\Rfd_R M$, we have $\pd_{R_\q} N_\q<\infty$ and $\Hdim_{R_\q} \Ext^i_{R_\q}(M_\q,N_\q)<\infty$ for any $0\le i\le\Rfd_R M$.
However, $\q$ is not in $S$ by the minimality of $\p$ in $S$.
Thus either $\Hdim_{R_\q} M_\q<\infty$ or $N_\q=0$ holds.
Lemma \ref{fpd dualizing ext vanishing}(3) forces $\Ext^j_{R_\q}(M_\q,N_\q)=0$ for any $j>\Rfd_R M$.
Therefore $\Ext^j_{R_\p}(M_\p,N_\p)$ has finite length for any $j>\Rfd_R M$.
Theorem \ref{fpd homology} concludes $\Hdim_{R_\p} M_\p<\infty$.
This contradicts $\p\in S$.
It follows from the above that $\Hdim_{R_\m} M_\m<\infty$ for any maximal ideal $\m$ of $R$, which induces $\Hdim_R M<\infty$.
\end{proof}

\begin{rem}\label{why localize}
The assumptions of Corollary \ref{pd or Gdim cor} concerning localized modules are seen to be essential in the following sense.
If $R$ is not a local ring, then for two finitely generated modules $M$ and $N$, it is possible that $\Ext^i_R(M, N)=0$ for all $i$.
This does not depend on the homological dimensions of $M$ and $N$.
For example, let $S$ be a ring, $\m$ and $\n$ distinct maximal ideals of $S$, $R=S/\m^2\n$, $M=S/\m$ and $N=S/\n$.
Considering the localization at every maximal ideal, we obtain $\pd_R N<\infty$, and $\Ext^i_R(M, N)=0$ for all $i$.
On the other hand, if $R_\m$ is not field (i.e. $\m S_\m\ne 0$), then $\pd_R M=\infty$ holds.
\end{rem}

We now list the corollaries that follow from the above results.
We begin with the case of a local ring, which is the most important case in this line of research.
The following corollary provides an affirmative answer to Question \ref{HJM Q6.1}(1) under essential assumptions, and is an immediate consequence of Corollary \ref{pd or Gdim cor}.
Indeed, when $\Hdim_R M<\infty$, we have $\depth R-\depth M=\Hdim_R M=\Rfd_R M$, so the hypothesis of Corollary \ref{pd or Gdim cor 2} is weaker than that of Question \ref{HJM Q6.1}(1).

\begin{cor}\label{pd or Gdim cor 2}
Let $R$ be a local ring, and $M$ and $N$ nonzero finitely generated $R$-modules with $\pd_R N<\infty$ and $\Hdim_R\Ext^i_R(M,N)<\infty$ for any $0\le i\le \Rfd_R M$.
Then $\Hdim_R M<\infty$.
\end{cor}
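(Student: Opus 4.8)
The plan is to obtain this statement as the local-ring instance of Corollary \ref{pd or Gdim cor}. Since $R$ is local it has a unique maximal ideal $\m$, and localizing at $\m$ changes nothing: $R_\m=R$, $M_\m=M$, $N_\m=N$, and $\Ext^i_{R_\m}(M_\m,N_\m)=\Ext^i_R(M,N)$ for all $i$. Hence the hypotheses $N\ne 0$, $\pd_R N<\infty$, and $\Hdim_R\Ext^i_R(M,N)<\infty$ for $0\le i\le\Rfd_R M$ are precisely condition (2) of Corollary \ref{pd or Gdim cor} at the maximal ideal $\m$, and that corollary immediately yields $\Hdim_R M<\infty$.

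Unwinding what this amounts to, the substantive step is the following: apply Theorem \ref{fpd homology} with $r=\Rfd_R M$. One has $r\ge\depth R-\depth M$, and $\Hdim_R\Ext^i_R(M,N)<\infty$ for $0\le i\le r$ by hypothesis, so the only input still needed is that $\Ext^j_R(M,N)$ has finite length for every $j>r$ --- equivalently, that $\Ext^j_{R_\p}(M_\p,N_\p)=0$ for every nonmaximal prime $\p$ and every $j>r$. This is exactly the point handled inside the proof of Corollary \ref{pd or Gdim cor}: via a minimal-counterexample argument over $\spec R$ one reduces to a prime $\p$ for which (by minimality, since the $\pd$- and Ext-hypotheses pass to localizations) either $\Hdim_{R_\p}M_\p<\infty$ or $N_\p=0$; in the first case finite projective dimension of $N_\p$ together with finiteness of $\Gdim_{R_\p}M_\p$ (a consequence of $\Hdim_{R_\p}M_\p<\infty$) let Lemma \ref{fpd dualizing ext vanishing}(3) force $\Ext^j_{R_\p}(M_\p,N_\p)=0$ for $j>\depth R_\p-\depth M_\p$, and $\depth R_\p-\depth M_\p\le\Rfd_R M=r$; in the second case the Ext modules vanish outright.

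So there is no genuine new obstacle at this stage: the entire weight of the argument has already been borne by Theorem \ref{fpd homology} and Corollary \ref{pd or Gdim cor}. The only remark I would add is the one preceding the statement: once $\Hdim_R M<\infty$ is known one has $\Rfd_R M=\depth R-\depth M\le\depth R$, so the hypothesis of the corollary is genuinely weaker than that of Question \ref{HJM Q6.1}(1), which asks for finiteness of $\Hdim_R\Ext^i_R(M,N)$ over the longer range $0\le i\le\depth R$.
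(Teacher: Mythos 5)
Your proposal is correct and matches the paper exactly: the paper itself presents this corollary as an immediate consequence of Corollary \ref{pd or Gdim cor}, specialized to the unique maximal ideal of the local ring $R$, and it gives no separate proof. Your unwinding of what Corollary \ref{pd or Gdim cor} does in this case (the minimal-counterexample argument over $\spec R$, the reduction to finite length of the higher Ext modules via Lemma \ref{fpd dualizing ext vanishing}(3), and the appeal to Theorem \ref{fpd homology} with $r=\Rfd_R M$) accurately reflects the paper's proof of that corollary, and your closing remark on why the hypothesis is weaker than that of Question \ref{HJM Q6.1}(1) matches the paper's own comment preceding the statement.
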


Lemma \ref{pd G-dim keisan} shows that the validity of Question \ref{HJM Q6.1}(1) implies that of \ref{HJM Q6.1}(2).
In particular, from Corollary \ref{pd or Gdim cor 2}, we immediately obtain Corollary \ref{pd or Gdim cor 2-2}, which ``essentially'' encompasses Question \ref{HJM Q6.1}(2).

\begin{lem}\label{pd G-dim keisan}
Let $R$ be a local ring, and $M$ and $N$ nonzero finitely generated $R$-modules.
\begin{enumerate}[\rm(1)]
\item If $\pd_R M<\infty$, then then $\pd_RM={\rm sup}\{i \mid\Ext_R^i(M,N)\ne0\}$.
\item If $\Gdim_R M<\infty$ and $\pd_R N<\infty$, then $\Gdim_RM={\rm sup}\{i \mid\Ext_R^i(M,N)\ne0\}$.
\end{enumerate}
\end{lem}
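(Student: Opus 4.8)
The plan is to prove both parts by a single template. Write $g:=\Hdim_R M$, so $g=\pd_R M$ in case (1) and $g=\Gdim_R M$ in case (2). I would establish (a) $\Ext^i_R(M,N)=0$ for every $i>g$, and (b) an isomorphism $\Ext^g_R(M,N)\cong\Ext^g_R(M,R)\otimes_R N$. Since $\Ext^g_R(M,R)$ and $N$ are nonzero finitely generated modules, Nakayama's lemma then gives $\Ext^g_R(M,N)\ne0$, and combined with (a) this yields $\sup\{i\mid\Ext^i_R(M,N)\ne0\}=g$. In case (1), statement (a) is immediate from a finite projective resolution of $M$, and $\Ext^p_R(M,R)\ne0$ for $p=\pd_R M$ follows from a minimal free resolution $G$ of $M$, whose differentials have entries in the maximal ideal $\m$: indeed $\Ext^p_R(M,R)=\cok(G_{p-1}^*\to G_p^*)$ surjects onto $G_p^*\otimes_R R/\m\ne0$. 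In case (2), the Auslander--Bridger formula gives $g=\Gdim_R M=\depth R-\depth M$, so (a) is exactly Lemma \ref{fpd dualizing ext vanishing}(3) — this is where the hypothesis $\pd_R N<\infty$ enters — and $\Ext^g_R(M,R)\ne0$ by the standard characterization $\Gdim_R M=\sup\{i\mid\Ext^i_R(M,R)\ne0\}$, valid when $\Gdim_R M<\infty$ (see \cite{C}).

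For part (b), if $N$ is free there is nothing to prove, so assume $N$ is not free and choose a free presentation $F_1\xrightarrow{\phi}F_0\to N\to0$ with $F_0,F_1$ finitely generated free. Set $K=\ker(F_0\to N)$ and $K'=\ker(F_1\twoheadrightarrow K)$. In case (1) one has $\Ext^{j}_R(M,-)=0$ on all modules for $j>p$, while in case (2) the modules $K$ and $K'$ are syzygies of $N$, hence of finite projective dimension, so Lemma \ref{fpd dualizing ext vanishing}(3) applies; either way $\Ext^{g+1}_R(M,K)=\Ext^{g+1}_R(M,K')=0$. Feeding the short exact sequences $0\to K\to F_0\to N\to0$ and $0\to K'\to F_1\to K\to0$ into the long exact sequence for $\Ext_R(M,-)$ gives exactness of
\[
\Ext^g_R(M,F_1)\xrightarrow{\ \Ext^g_R(M,\phi)\ }\Ext^g_R(M,F_0)\longrightarrow\Ext^g_R(M,N)\longrightarrow0 .
\]
Because the natural map $\Ext^g_R(M,R)\otimes_R F\to\Ext^g_R(M,F)$ is an isomorphism for every finitely generated free $F$, naturally in $F$, the left-hand map is identified with $\mathrm{id}_{\Ext^g_R(M,R)}\otimes\phi$; passing to cokernels yields $\Ext^g_R(M,N)\cong\Ext^g_R(M,R)\otimes_R\cok\phi=\Ext^g_R(M,R)\otimes_R N$, as needed.

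I expect the only non-formal ingredient to be the non-vanishing $\Ext^{\Gdim_R M}_R(M,R)\ne0$ used in (2). Besides invoking the standard theory of Gorenstein dimension, one can argue directly: assuming $g\ge1$ (the case $g=0$ being clear, since a nonzero totally reflexive module has nonzero dual) and $\Ext^g_R(M,R)=0$, dualize the short exact sequence $0\to\Omega_R^g M\to G_{g-1}\to\Omega_R^{g-1}M\to0$ coming from a minimal free resolution of $M$ — first obtaining $0\to(\Omega_R^{g-1}M)^*\to G_{g-1}^*\to(\Omega_R^g M)^*\to0$, using $\Ext^1_R(\Omega_R^{g-1}M,R)\cong\Ext^g_R(M,R)=0$, and then dualizing once more — to conclude that $\Omega_R^{g-1}M$ is totally reflexive, contradicting $\Gdim_R M=g$. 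A minor technical point is to keep track of the naturality in $F$ of the isomorphism $\Ext^g_R(M,F)\cong\Ext^g_R(M,R)\otimes_R F$, so that the cokernel computation in the previous paragraph is legitimate; beyond that, everything reduces to routine diagram chasing.
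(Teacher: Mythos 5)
Your proof is correct and mirrors the paper's argument: both reduce to a minimal free presentation of $N$, use Lemma~\ref{fpd dualizing ext vanishing}(3) (together with the Auslander--Bridger formula) to bound the vanishing, derive the right-exact sequence $\Ext^g_R(M,F_1)\to\Ext^g_R(M,F_0)\to\Ext^g_R(M,N)\to 0$ from the two syzygy short exact sequences, and conclude by Nakayama from $\Ext^g_R(M,R)\neq 0$. The only cosmetic differences are that you phrase the cokernel as $\Ext^g_R(M,R)\otimes_R N$ while the paper keeps the explicit matrix $\phi$, and you supply self-contained arguments for $\Ext^{\pd M}_R(M,R)\neq 0$ and $\Ext^{\Gdim M}_R(M,R)\neq 0$ where the paper cites \cite{Mat} and \cite{C}.
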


\begin{proof}
The assertion (1) follows by \cite[Page 154, Lemma 1(iii)]{Mat}.
To prove (2), we take a minimal free presentation $R^{\oplus m} \xrightarrow{\phi} R^{\oplus n}\to N \to 0$, where $\phi$ is an $n\times m$-matrix whose entries belong to the maximal ideal.
It is enough to show that $\Ext^r_R(M, N)\ne 0$; see Lemma \ref{fpd dualizing ext vanishing}(3).
Consider the short exact sequences $0\to \ker\phi \to R^{\oplus m} \to \image \phi \to 0$ and $0\to \image \phi \to R^{\oplus n} \to N \to 0$ of $R$-modules of finite projective dimension.
By Lemma \ref{fpd dualizing ext vanishing}(3), we have exact sequences
$$
\Ext^r_R(M, R^{\oplus m}) \to \Ext^r_R(M, \image \phi)\to 0 \ {\rm and} \ \Ext^r_R(M, \image \phi) \to \Ext^r_R(M, R^{\oplus n}) \to \Ext^r_R(M, N)\to 0
$$
where $r=\Gdim_RM$.
So $\Ext^r_R(M, R)^{\oplus m} \xrightarrow{\phi} \Ext^r_R(M, R)^{\oplus n} \to \Ext^r_R(M, N)\to 0$ is exact.
It follows from \cite[Theorem (1.2.7)]{C} that $\Ext^r_R(M, R)\ne 0$.
The Nakayama's lemma yields $\Ext^r_R(M, N)\ne 0$ because all entries of $\phi$ belong to the maximal ideal.
\end{proof}

\begin{cor}\label{pd or Gdim cor 2-2}
Let $R$ be a local ring, $M$ and $N$ nonzero finitely generated $R$-modules with $\pd_R N<\infty$, and $0\le r \le \Rfd_R M$.
Suppose that $\Hdim_R\Ext^i_R(M,N)<\infty$ for any $0\le i\le r$, and that $\Ext^j_R(M,N)=0$ for any $r< j\le \Rfd_R M$.
Then $\Hdim_R M=r=\Rfd_R M=\depth R-\depth M$.
\end{cor}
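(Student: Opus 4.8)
The plan is to reduce the statement to the already-settled Corollary \ref{pd or Gdim cor 2}, and then to pin down the \emph{exact} value of $\Hdim_R M$ via Lemma \ref{pd G-dim keisan}. First I would observe that the hypotheses in fact give $\Hdim_R\Ext^i_R(M,N)<\infty$ for \emph{all} $0\le i\le\Rfd_R M$: for $0\le i\le r$ this is assumed outright, while for $r<j\le\Rfd_R M$ the module $\Ext^j_R(M,N)$ is zero and hence trivially of finite homological dimension. Thus Corollary \ref{pd or Gdim cor 2} applies and yields $\Hdim_R M<\infty$.

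Once the finiteness of $\Hdim_R M$ is in hand, the Auslander--Buchsbaum formula (in the case $\Hdim=\pd$), respectively the Auslander--Bridger formula (in the case $\Hdim=\Gdim$), together with the restricted flat dimension formula recorded in Remark \ref{Well-known fact}(3), give $\Hdim_R M=\depth R-\depth M=\Rfd_R M$. It then remains to show $\Rfd_R M=r$; since $r\le\Rfd_R M$ holds by assumption, it suffices to prove the reverse inequality $\Rfd_R M\le r$.

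For this I would invoke Lemma \ref{pd G-dim keisan} (part (1) when $\Hdim=\pd$, part (2) when $\Hdim=\Gdim$, the latter using $\pd_R N<\infty$), which gives $\Hdim_R M=\sup\{i\mid\Ext^i_R(M,N)\ne0\}$. Since $M$ is nonzero, this supremum is a well-defined nonnegative integer, so there exists an index $s=\Hdim_R M=\Rfd_R M$ with $\Ext^s_R(M,N)\ne0$. If we had $r<\Rfd_R M$, then $s$ would lie in the range $r<s\le\Rfd_R M$, forcing $\Ext^s_R(M,N)=0$ by hypothesis, a contradiction. Hence $\Rfd_R M\le r$, so $\Rfd_R M=r$, and combining the above equalities yields $\Hdim_R M=r=\Rfd_R M=\depth R-\depth M$, as claimed.

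Since each step is a direct appeal to an already-established result, I do not expect a genuine obstacle here; the only points that require a moment of care are the (harmless) convention that the zero module has finite homological dimension, so that the hypothesis of Corollary \ref{pd or Gdim cor 2} is literally met, and the observation that Lemma \ref{pd G-dim keisan}(2) requires $\pd_R N<\infty$, which is indeed part of the present hypotheses.
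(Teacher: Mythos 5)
Your proof is correct and takes essentially the same approach the paper implicitly uses: apply Corollary \ref{pd or Gdim cor 2} (noting the vanishing hypothesis gives the remaining finiteness for free) to get $\Hdim_R M<\infty$, then pin down the exact value with Lemma \ref{pd G-dim keisan} together with the Auslander--Buchsbaum/Auslander--Bridger formula and Remark \ref{Well-known fact}(3). The paper leaves this proof implicit, stating only that the corollary follows immediately from Corollary \ref{pd or Gdim cor 2} via Lemma \ref{pd G-dim keisan}; your write-up fills in exactly those details.
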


Question \ref{HJM Q6.1} had already been resolved in the Cohen--Macaulay case by
\cite[Corollaries 5.7 and 5.8]{HJM}.
We recover their results by combining our results with the following lemma.

\begin{lem}\label{Rfd CM lemma}
Let $R$ be a Cohen--Macaulay local ring, and $M$ a finitely generated $R$-module.
Then $\depth R_\p-\depth M_\p\le \depth R-\depth M$ for all $\p\in\spec R$, and hence $\Rfd_R M=\depth R-\depth M$.
\end{lem}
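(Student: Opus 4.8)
The plan is to reduce everything to the Auslander--Buchsbaum-type behaviour of depth under localization in a Cohen--Macaulay ring. Fix $\p\in\spec R$. Since $R$ is Cohen--Macaulay and local, it is equidimensional and catenary, so $\dim R_\p+\dim R/\p=\dim R$, and moreover $\depth R_\p=\dim R_\p$ because $R_\p$ is again Cohen--Macaulay. Thus the quantity $\depth R-\depth R_\p$ equals $\dim R-\dim R_\p=\dim R/\p=\height(\m/\p)$ (with $\m$ the maximal ideal), which is precisely the length of the longest chain of primes from $\p$ up to $\m$.

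Next I would bound $\depth M-\depth M_\p$ from below by the same quantity $\dim R/\p$. The cleanest route is to induct on $\dim R/\p$. If $\dim R/\p=0$ then $\p=\m$ and there is nothing to prove. If $\dim R/\p>0$, choose a prime $\q$ with $\p\subsetneq\q$ and $\dim R/\q=\dim R/\p-1$; it suffices to prove the single-step inequality $\depth M_\q\le \depth M_\p+1$ and then apply the induction hypothesis to the pair $(\q,\m)$, noting $\depth R_\q-\depth R_\p=1$ as computed above. For the single step, localize at $\q$ and rename, so we may assume $\q=\m$, $\dim R/\p=1$; I must show $\depth M\le \depth M_\p+1$. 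Pick $x\in\m$ that is part of a system of parameters witnessing $\dim R/\p=1$, i.e. $x\notin\p$ and $\m$ is (up to radical) generated by $\p$ together with $x$; concretely $x$ lies in no minimal prime of $R/\p$ over $\p$ other than possibly ones that... — more simply, take $x\in\m\setminus\p$. If $x$ is $M$-regular, then $\depth M/xM=\depth M-1$ and $(M/xM)_\p=M_\p/xM_\p=0$ since $x$ becomes a unit in $R_\p$; but $\depth$ of the zero module is $+\infty$, which is not the obstruction — instead I should compare with a prime of the right dimension.

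Let me restructure the single step to avoid that pitfall. The robust statement is: for $\p\subsetneq\q$ primes, $\depth M_\p\le \depth M_\q-\height(\q/\p)$ need not hold for arbitrary $M$, but it does hold here because $\depth M_\p\ge \depth M-\dim R/\p$ is exactly the content of \cite[Proposition 1.2.10(a)]{BH} (which states $\depth M_\p+\dim R/\p\ge \depth M$ for any Noetherian local ring, using only $\dim R/\p\ge \dim(\supp M/\p\ M)$-type bounds — in fact this inequality holds in full generality). Combining this general inequality with the Cohen--Macaulay identity $\depth R-\depth R_\p=\dim R/\p$ gives
\[
\depth R_\p-\depth M_\p=(\depth R-\dim R/\p)-\depth M_\p\le \depth R-\depth M,
\]
as desired. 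Taking the supremum over $\p$ yields $\Rfd_R M\le \depth R-\depth M$; the reverse inequality is the case $\p=\m$, so equality holds.

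The main obstacle is locating and correctly invoking the general depth inequality $\depth M\le \depth M_\p+\dim R/\p$: one must be careful that this holds for all $\p\in\supp M$ (and is vacuous or trivial otherwise, since $M_\p=0$ forces $\depth M_\p=\infty$), and that the Cohen--Macaulay hypothesis is used only to upgrade $\depth R-\depth R_\p\le \dim R/\p$ to an equality, which is standard from unmixedness and catenarity of Cohen--Macaulay local rings \cite[Theorem 2.1.2, Corollary 2.1.4]{BH}. No deep input beyond these textbook facts is needed.
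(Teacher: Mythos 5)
Your final argument is correct and essentially matches the paper's one-line proof, which likewise combines the general depth inequality $\depth M\le\depth M_\p+\dim R/\p$ (cited there as \cite[(A.6.2)]{C} rather than \cite[Proposition 1.2.10(a)]{BH}) with the Cohen--Macaulay identity $\dim R/\p=\depth R-\depth R_\p$. The initial false-start on a one-step prime-chain induction can simply be deleted, since the general inequality you eventually invoke already does all the work.
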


\begin{proof}
The claim follows from \cite[(A.6.2)]{C} as $\dim(R/\p)=\dim R-\dim R_\p=\depth R-\depth R_\p$.
\end{proof}

\begin{cor}\label{Rfd CM 1}
Let $R$ be a Cohen--Macaulay local ring, $M$ and $N$ nonzero finitely generated $R$-modules with $\pd_R N<\infty$ and $\Hdim_R\Ext^i_R(M,N)<\infty$ for any $0\le i\le \depth R-\depth M$. Then $\Hdim_R M<\infty$.
In addition, if $\Ext^i_R(M,N)=0$ for any $1\le i\le \depth R-\depth M$, then $\Hdim_R M=0$.
\end{cor}

\begin{rem}\label{recoverd privious results}
Combining the results obtained so far, \cite[(2) and (3) of Theorem 1.3]{MJ} are recovered.
In fact, if $R$ is Cohen--Macaulay or $M$ is locally totally reflexive on the punctured spectrum, then $\Rfd_R M={\rm max}\{\depth R-\depth M, 0\}$.
Therefore, by combining Corollary \ref{pd or Gdim cor 2} and Lemma \ref{pd G-dim keisan}(2), it is easy to see that a generalized statement of \cite[(2) and (3) of Theorem 1.3]{MJ}, where ``$R$'' is replaced by ``a module $N$ of finite projective dimension'', holds true.
We also obtain similar generalizations of \cite[Proposition 13]{DHM} and \cite[Proposition 6.4]{HJM} by imposing assumptions on more Ext modules.
The remaining problem is whether Corollary \ref{pd or Gdim cor 2} can be proved by replacing $\Rfd_R M$ with $\depth R-\depth M$.
\end{rem}

A case that is often considered, in addition to the local case, is the graded case as follows.
In fact, the examples in the next section, which are quotients of polynomial rings over a field, fall into these cases.
A graded ring $R=\bigoplus_{n\ge 0} R_n$ is called \textit{*local} if it has a unique \textit{*maximal ideal}, that is, a graded ideal that is properly contained in no graded ideal other than $R$.
For a finitely generated graded $R$-module $M$, we denote by $\depth M$ the maximal length of an $M$-regular sequence contained in a *maximal ideal.
In this case as well, the same statements as in the local case hold.

\begin{cor}\label{pd or Gdim cor 3}
Let $R$ be a *local ring, and $M$ and $N$ nonzero finitely generated graded $R$-modules with $\pd_R N<\infty$ and $\Hdim_R\Ext^i_R(M,N)<\infty$ for any $0\le i\le \Rfd_R M$.
Then $\Hdim_R M<\infty$.
\end{cor}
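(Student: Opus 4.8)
The plan is to reduce the statement to the local case, Corollary \ref{pd or Gdim cor 2}, by localizing at the unique *maximal ideal $\m$ of $R$; note that $R_\m$ is Noetherian local. First I would assemble the standard dictionary relating finitely generated graded modules over a *local ring to their localizations at $\m$. Since $M$ and $N$ are nonzero finitely generated graded modules, their annihilators are proper graded ideals, hence contained in $\m$, so $M_\m\neq 0$ and $N_\m\neq 0$. Since $M$ is finitely generated, $(\Ext^i_R(M,N))_\m\cong\Ext^i_{R_\m}(M_\m,N_\m)$ for all $i$. Moreover, a minimal graded free resolution of a finitely generated graded module localizes at $\m$ to a minimal free resolution, so graded projective dimension --- and, combined with Remark \ref{Well-known fact}(3), also $\Gdim$ --- is detected after localizing at $\m$.

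Granting this dictionary, the argument is short. From $\pd_R N<\infty$ we get $\pd_{R_\m}N_\m<\infty$; from Remark \ref{Well-known fact}(3) together with the isomorphism above, $\Hdim_{R_\m}\Ext^i_{R_\m}(M_\m,N_\m)<\infty$ for $0\le i\le\Rfd_R M$; and since $\spec R_\m=\{\q\in\spec R\mid\q\subseteq\m\}$, with $R_\q$ and $M_\q$ unchanged under the further localization,
$$\Rfd_{R_\m} M_\m=\sup\{\depth R_\q-\depth M_\q\mid\q\subseteq\m\}\le\Rfd_R M.$$
Hence $(R_\m,M_\m,N_\m)$ satisfies the hypotheses of Corollary \ref{pd or Gdim cor 2} (with $\Hdim_{R_\m}\Ext^i_{R_\m}(M_\m,N_\m)$ finite on a range of $i$ at least as large as the one required there), so $\Hdim_{R_\m}M_\m<\infty$, and therefore $\Hdim_R M<\infty$ by the detection statement.

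I expect the only delicate point to be the ascent of finiteness of $\Gdim$ from $R_\m$ back to the graded ring $R$; for $\pd$ it is immediate from minimal graded free resolutions. For $\Gdim$ one works with the graded $n$th syzygy $\Omega^n_R M$, where $n=\Gdim_{R_\m}M_\m$: it localizes to a totally reflexive $R_\m$-module, and since $\Ext^i_R(\Omega^n_R M,R)$ for $i>0$ and the kernel and cokernel of the biduality map of $\Omega^n_R M$ are finitely generated graded modules that vanish after localizing at $\m$, they vanish; thus $\Omega^n_R M$ is totally reflexive and $\Gdim_R M\le n$. Alternatively one may simply cite the identity $\Gdim_R M=\Gdim_{R_\m}M_\m$ for finitely generated graded modules over a *local ring. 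With these facts in hand, no idea beyond the local case is needed, and the whole proof is a routine translation with ``maximal ideal'' replaced by ``*maximal ideal''.
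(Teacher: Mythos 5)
Your proof is correct and follows essentially the same route as the paper: localize at the unique *maximal ideal $\m$ (which is a maximal ideal), verify the hypotheses of Corollary \ref{pd or Gdim cor 2} for $(R_\m,M_\m,N_\m)$ (using that $\Rfd_{R_\m}M_\m\le\Rfd_R M$), and then ascend finiteness of $\pd$ from a minimal graded free resolution and of $\Gdim$ by checking total reflexivity of a graded syzygy after localization. The paper merely compresses the ascent step into a citation of \cite[Proposition 1.5.15(c),(e)]{BH}, which is exactly the graded-detection argument you spell out.
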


\begin{proof}
Let $\m$ be the *maximal ideal ideal of $R$.
Since $\m$ is a maximal ideal ideal of $R$, applying Corollary \ref{pd or Gdim cor 2} to $R_\m$, we obtain $\Hdim_{R_\m} M_\m<\infty$, which implies $\Hdim_R M<\infty$.
In fact, when $\Hdim=\pd$, this follows immediately from \cite[Proposition 1.5.15(e)]{BH}.
On the other hand, \cite[Proposition 1.5.15(c)]{BH} shows that the graded $R$-module $\Omega_R^r M$ is totally reflexive if and only if $(\Omega_R^r M)_\m$ is totally reflexive over $R_\m$, where $r=\Rfd_R M$.
\end{proof}

The following results can also be obtained by reducing it to Corollaries \ref{pd or Gdim cor 2-2} and \ref{Rfd CM 2}, in the same way as the proof of Corollary \ref{pd or Gdim cor 3}.
Thanks to \cite[Theorem 1.5.9]{BH}, $\Rfd$ is determined solely by the information at the localizations at homogeneous prime ideals.

\begin{cor}\label{pd or Gdim cor 3-2}
Let $R$ be a *local ring, $M$ and $N$ nonzero finitely generated graded $R$-modules with $\pd_R N<\infty$, and $0\le r \le \Rfd_R M$.
Suppose that $\Hdim_R\Ext^i_R(M,N)<\infty$ for any $0\le i\le r$, and that $\Ext^j_R(M,N)=0$ for any $r< j\le \Rfd_R M$.
Then $\Hdim_R M=r=\Rfd_R M=\depth R-\depth M$.
\end{cor}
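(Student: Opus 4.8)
The plan is to imitate the proof of Corollary \ref{pd or Gdim cor 3}: pass to the localization at the unique $*$maximal ideal $\m$ of $R$ and reduce to Corollary \ref{pd or Gdim cor 2-2}. Since $R=\bigoplus_{n\ge 0}R_n$, the ideal $\m$ is a genuine maximal ideal, so $M_\m$ and $N_\m$ are nonzero finitely generated $R_\m$-modules. First I would check that all hypotheses descend to $R_\m$. Using $(\Ext^i_R(M,N))_\m\cong\Ext^i_{R_\m}(M_\m,N_\m)$ together with Remark \ref{Well-known fact}(3), one gets $\pd_{R_\m}N_\m<\infty$ and $\Hdim_{R_\m}\Ext^i_{R_\m}(M_\m,N_\m)<\infty$ for $0\le i\le r$, while the assumed vanishing $\Ext^j_R(M,N)=0$ for $r<j\le\Rfd_R M$ gives $\Ext^j_{R_\m}(M_\m,N_\m)=0$ in that same range.

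Next I would match up the invariants. Every homogeneous prime of $R$ is contained in $\m$, and $\spec R_\m$ consists exactly of the $\p R_\m$ with $\p\subseteq\m$, with $(R_\m)_{\p R_\m}=R_\p$; hence $\Rfd_{R_\m}M_\m\le\Rfd_R M$, while by \cite[Theorem 1.5.9]{BH} the supremum defining $\Rfd_R M$ is attained at a homogeneous prime, so in fact $\Rfd_R M=\Rfd_{R_\m}M_\m$. Likewise $\depth R=\depth R_\m$ and $\depth M=\depth M_\m$ for the graded depths. In particular $0\le r\le\Rfd_{R_\m}M_\m$, so Corollary \ref{pd or Gdim cor 2-2} applies over the local ring $R_\m$ and yields
\[
\Hdim_{R_\m}M_\m=r=\Rfd_{R_\m}M_\m=\depth R_\m-\depth M_\m.
\]

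Finally I would lift finiteness from $R_\m$ back to $R$, exactly as in Corollary \ref{pd or Gdim cor 3}: for $\Hdim=\pd$ this is \cite[Proposition 1.5.15(e)]{BH}, and for $\Hdim=\Gdim$ one invokes \cite[Proposition 1.5.15(c)]{BH} to deduce that the graded syzygy $\Omega_R^r M$ is totally reflexive over $R$ from the fact that $(\Omega_R^r M)_\m$ is totally reflexive over $R_\m$. Once $\Hdim_R M<\infty$ is known, the equality $\Rfd_R M=\depth R-\depth M$ holds (as noted in the introduction), so combining the displayed identities with $\depth R-\depth M=\depth R_\m-\depth M_\m$ gives $\Hdim_R M=r=\Rfd_R M=\depth R-\depth M$, as desired. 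The only point requiring care is the graded bookkeeping in the second paragraph — that $\Rfd$ and graded depth are faithfully detected at $\m$, and that total reflexivity of a graded module is reflected by that of its localization; everything else is a formal transfer of Corollary \ref{pd or Gdim cor 2-2}.
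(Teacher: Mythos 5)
Your proposal is correct and follows exactly the paper's route: the paper's one-line justification is that this corollary is obtained "by reducing to Corollary~\ref{pd or Gdim cor 2-2} ... in the same way as the proof of Corollary~\ref{pd or Gdim cor 3}", invoking \cite[Theorem 1.5.9]{BH} for the claim that $\Rfd$ is detected at homogeneous primes. You have simply made the transfer-to-$R_\m$ argument explicit — descent of hypotheses, agreement of $\Rfd$ and depth, application of Corollary~\ref{pd or Gdim cor 2-2}, and lifting finiteness back via \cite[Proposition 1.5.15]{BH} — so this is the same proof, spelled out in more detail.
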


\begin{cor}\label{Rfd CM 2}
Let $R$ be a Cohen--Macaulay *local ring, and $M$ and $N$ nonzero finitely generated graded $R$-modules with $\pd_R N<\infty$ and $\Hdim_R\Ext^i_R(M,N)<\infty$ for any $0\le i\le \depth R-\depth M$.
Then $\Hdim_R M<\infty$.
In addition, if $\Ext^i_R(M,N)=0$ for any $1\le i\le \depth R-\depth M$, then $\Hdim_R M=0$.
\end{cor}

From this point onward, we consider, dually to the preceding discussion, the finiteness of the (Gorenstein) injective dimension of modules.
The following is the (Gorenstein) injective dimension analogue of Theorem \ref{fpd homology} and is among the main results of this paper.
Our approach, which closely examines the structure of the dualizing complex, remains extremely effective even when the ring is Cohen--Macaulay.

\begin{thm}\label{fid homology}
Let $(R,\m)$ be a local ring, $M$ and $N$ nonzero finitely generated $R$-modules with $\id_R M<\infty$, and $r\ge\depth R-\depth M$ a nonnegative integer. 
Suppose that $\Hdim_R\Ext^i_R(M,N)<\infty$ for all $0\le i\le r$, and $\Ext^j_R(M,N)$ has finite length for any $j>r$.
In the case $\Hdim=\Gdim$, assume further that $R$ has dualizing complex.
Then $\Hid_R N<\infty$.
\end{thm}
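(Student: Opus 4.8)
The plan is to dualize the proof of Theorem~\ref{fpd homology}, letting parts (2) and (4) of Lemma~\ref{fpd dualizing ext vanishing} play the roles that parts (1) and (3) played there. As usual one reduces to $R$ complete; since $\id_R M<\infty$ and $M\ne 0$, the ring $R$ is then Cohen--Macaulay (as in the proof of Lemma~\ref{fpd dualizing ext vanishing}), so I fix a dualizing complex $D$, take it to be the minimal injective resolution of the canonical module $\omega$, set $d=\dim R$, and note that $d-\depth M=\depth R-\depth M$.

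The first step is a Foxby-equivalence reduction. Because $\id_R M<\infty$, the module $M$ lies in the Bass class associated with $D$, so $M^{\dagger}:=\RHom_R(D,M)$ is a nonzero perfect complex (i.e.\ it is quasi-isomorphic to a bounded complex of finitely generated free modules) and $D\otimes_R^{\L}M^{\dagger}\simeq M$. Hence $\widetilde M:=\RHom_R(M^{\dagger},R)$ is again a nonzero perfect complex, and combining $M\simeq D\otimes_R^{\L}M^{\dagger}$ with the perfectness of $M^{\dagger}$ yields
\[
\RHom_R(M,N)\simeq\RHom_R\bigl(M^{\dagger},\RHom_R(D,N)\bigr)\simeq\widetilde M\otimes_R^{\L}N^{\dagger},\qquad N^{\dagger}:=\RHom_R(D,N),
\]
so $H^{i}\bigl(\widetilde M\otimes_R^{\L}N^{\dagger}\bigr)\cong\Ext^{i}_R(M,N)$ for all $i$. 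By Remark~\ref{Well-known fact}(2) the conclusion $\Hid_R N<\infty$ is equivalent to $\Hdim_R N^{\dagger}<\infty$; and since $\widetilde M$ is a nonzero perfect complex, a standard argument (the Poincar\'e series of $\widetilde M\otimes_R^{\L}N^{\dagger}$ is the product of those of $\widetilde M$ and of $N^{\dagger}$, and likewise for Gorenstein projective dimension) shows that $\Hdim_R N^{\dagger}<\infty$ as soon as $\Hdim_R\RHom_R(M,N)<\infty$. Thus it suffices to prove $\Hdim_R\RHom_R(M,N)<\infty$ --- that is, that the complex whose cohomology modules are the $\Ext^{i}_R(M,N)$ has finite homological dimension.

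For this I would imitate Theorem~\ref{fpd homology}. Represent $\RHom_R(M,N)\simeq\widetilde M\otimes_R^{\L}N^{\dagger}$ by a complex with manageable terms --- a bounded complex of finitely generated free modules for $\widetilde M$, and a complex built from a minimal injective resolution of $N$ for $N^{\dagger}$ --- and, using $\Hdim_R\Ext^{i}_R(M,N)<\infty$ for $0\le i\le r$, pick a morphism $\Phi\colon P\to\RHom_R(M,N)$ from a complex $P$ of finitely generated free modules concentrated in degrees $\le r$, with $\Hdim_R P<\infty$ and $H^{i}(\Phi)$ an isomorphism for $i\le r$; let $C$ be the mapping cone of $\Phi$. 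As in Theorem~\ref{fpd homology} it then suffices to show that $C$ becomes acyclic after an appropriate twist by $D$. The vanishing in high cohomological degrees comes from Lemma~\ref{fpd dualizing ext vanishing}(2), which controls $\RHom_R\bigl(\RHom_R(D,M),N\bigr)$, matched against $\RHom_R(M,N)$ via the isomorphism $M\simeq D\otimes_R^{\L}M^{\dagger}$. Away from $\m$ the cone $C$ is already acyclic, since $H^{i}(C)=0$ for $i\le r$ while $H^{j}(C)\cong\Ext^{j}_R(M,N)$ has finite length for $j>r$. The far-out (co)syzygies of $C$ are totally reflexive, so a suitable truncation of $C$ resolves a module of finite homological dimension, and one can reproduce the filtration-of-$D$ argument of Theorem~\ref{fpd homology}: using the explicit shape $D^{i}\cong\bigoplus_{\dim R/\p=d-i}E_R(R/\p)$ together with Lemma~\ref{fpd dualizing ext vanishing}(3) and (4) and Matlis duality, one shows $C\otimes_R D^{i}\simeq 0$ for $i\ne d$ and that the remaining strand in degree $d$ is acyclic. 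This forces $\RHom_R(M,N)$ to be quasi-isomorphic to $P$ up to the twist, whence $\Hdim_R\RHom_R(M,N)<\infty$.

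I expect the main obstacle to be exactly this last bookkeeping. Here the dualizing complex enters twice --- once in passing from $M$ to $M^{\dagger}$ and once as the twist of the mapping cone --- and Lemma~\ref{fpd dualizing ext vanishing}(2) naturally controls $\Ext^{\bullet}_R(M^{\dagger},N)$ rather than $\Ext^{\bullet}_R(M,N)$, so one must keep careful track of how these are related through $M\simeq D\otimes_R^{\L}M^{\dagger}$, of which complexes carry finitely generated free terms, and of where finite length is genuinely available. In the case $\Hdim=\Gdim$ one must in addition run the detection and acyclicity steps for Gorenstein rather than ordinary homological dimensions, and it is in part to legitimize this --- and the reduction to $R$ complete --- that one assumes $R$ admits a dualizing complex.
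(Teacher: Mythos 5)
Your plan has the right overall shape (reduce to $R$ complete and Cohen--Macaulay, fix a dualizing complex, build a morphism $\Phi\colon P\to (\text{flat representative of }\RHom_R(M,N))$, and show the cone vanishes after tensoring with $D$), and the ingredients you list for the final bookkeeping --- far-out totally reflexive cosyzygies, acyclicity away from $\m$, the shape $D^i\cong\bigoplus_{\dim R/\p=d-i}E_R(R/\p)$, the filtration of $D$, Matlis duality and Lemma~\ref{fpd dualizing ext vanishing} --- are indeed the ones the paper uses. But there is one genuine gap, and it sits exactly at the spot you flag as the ``main obstacle.''

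You propose to prove $\Hdim_R\RHom_R(M,N)<\infty$ by showing that $D\otimes^{\L}_R C\simeq 0$, where $C$ is the cone of $\Phi\colon P\to\RHom_R(M,N)$, and you want the vanishing in high degrees to come from Lemma~\ref{fpd dualizing ext vanishing}(2) ``matched against $\RHom_R(M,N)$ via $M\simeq D\otimes^{\L}_RM^{\dagger}$.'' That matching does not close. Pushing the isomorphism $M\simeq D\otimes^{\L}M^{\dagger}$ through gives $\RHom_R(M,N)\simeq\RHom_R(M^{\dagger},N^{\dagger})$, and then, since $M^{\dagger}$ is perfect,
\[
D\otimes^{\L}_R\RHom_R(M,N)\ \simeq\ \RHom_R\bigl(M^{\dagger},\ D\otimes^{\L}_RN^{\dagger}\bigr)
   \ =\ \RHom_R\bigl(\RHom_R(D,M),\ D\otimes^{\L}_R\RHom_R(D,N)\bigr),
\]
whereas Lemma~\ref{fpd dualizing ext vanishing}(2) controls $\RHom_R(\RHom_R(D,M),N)$. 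The two agree precisely when $D\otimes^{\L}_R\RHom_R(D,N)\simeq N$, i.e.\ when $N$ lies in the Bass class of $D$ --- but that is equivalent to $\Gid_RN<\infty$, which is exactly what you are trying to prove. So the high-degree vanishing you need for $D\otimes^{\L}_RC$ is not supplied by Lemma~\ref{fpd dualizing ext vanishing}(2) without a circular assumption. The paper's proof sidesteps this by choosing the specific flat representative $K=\Hom_R(I,J)$ of $\RHom_R(M,N)$ (with $I,J$ minimal injective resolutions of $M,N$) and tensoring by $\omega$: the evaluation isomorphism gives $\omega\otimes_RK\cong\Hom_R(\Hom_R(\omega,I),J)$, which is a representative of $\RHom_R(\RHom_R(\omega,M),N)$ with $N$ itself --- never $N^{\dagger}$ --- in the second slot, so Lemma~\ref{fpd dualizing ext vanishing}(2) applies on the nose. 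Then $\omega\otimes_RC$ is shown to be acyclic and one concludes $\Hid_R(\omega\otimes_RK)<\infty$, which gives $\Hid_RN<\infty$ since $\RHom_R(\omega,M)$ is nonzero of finite projective dimension.

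Two smaller remarks. First, the ``Poincar\'e series of a product'' justification for the reduction step is fine for $\pd$ but does not work for $\Gdim$ (Poincar\'e series do not detect Gorenstein projective dimension); the paper instead cites \cite[Lemma 6.2.10]{CF} and \cite[Theorem 2.9(1)]{HJM}, which handle both $\pd$ and $\Gdim$ uniformly via the fact that tensoring against a nonzero perfect complex detects finiteness of these dimensions. Second, since you never carry out the actual acyclicity computation for the cone, the argument as written does not yet constitute a proof --- and the mismatch above shows that the ``imitation'' of Theorem~\ref{fpd homology} cannot be done with the representative $\widetilde M\otimes^{\L}N^{\dagger}$; you would need to switch to the representative $K=\Hom_R(I,J)$ and twist by $\omega$ as in the paper.
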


\begin{proof}
When $\Hdim=\pd$, we can replace $R$ with its completion, so we may assume that $R$ admits a dualizing complex $D$ in order to prove theorem.
Put $d=\dim R$.
Since $M$ is a nonzero finitely generated $R$-module with finite injective dimension, $R$ is Cohen--Macaulay; see \cite[Corollary 9.6.2, Remark 9.6.4(a)]{BH}.
Let $\omega$ be the canonical module of $R$, and let $I=(0\to I^0\to \cdots\to I^d \to 0)$ and $J=(0\to J^0\to J^1\to \cdots)$ be minimal injective resolutions of $M$ and $N$.
Note that $\Hom_R(I,J)=:K=(0\to K^{-d} \to K^{-d+1}\to \cdots)$ is a complex of flat $R$-modules. %see \cite[page 14]{C}
By \cite[Theorem 4.5.13]{CFH}, we have a quasi-isomorphism 
$$
\RHom_R(\RHom_R(\omega, M), N)=\Hom_R(\Hom_R(\omega, I), J)\cong
\omega\otimes_R \Hom_R(I,J)=\omega\otimes_R K.
$$
Noting that $\RHom_R(\omega, M)\ne 0$ and it has finite projective dimension, if $\Hid_R (\omega\otimes_R K)<\infty$, we get $\Hid_R N<\infty$; see \cite[Lemma 6.2.10]{CF} and \cite[Theorem 2.9(1)]{HJM}. 
We prove $\Hid_R (\omega\otimes_R K)<\infty$.

Take a homomorphism $\Phi: P\to K$ of complexes such that $H^i(\Phi):H^i(P) \to H^i(K)\cong\Ext^i_R(M,N)$ is an isomorphism for every $i\le r$, where $P=(\cdots\to P^{r-1}\to P^r\to 0)$ is a complex of finitely generated free $R$-modules.
Let $C$ be the mapping cone of $\Phi$.
The short exact sequence
\[
  \xymatrix@C=15pt@R=10pt
  {
    K= (\cdots  \ar[r] 
    & K^{i-1} \ar[r] \ar[d]
    & K^i \ar[r] \ar[d]
    & \cdots ) \\
    C= (\cdots  \ar[r] 
    & C^{i-1}=K^{i-1}\oplus P^{i} \ar[r] \ar[d]
    & C^i=K^i\oplus P^{i+1} \ar[r] \ar[d]
    & \cdots ) \\
    P[1]= (\cdots \ar[r]
    & P^{i} \ar[r] 
    & P^{i+1} \ar[r] 
    & \cdots ) \\
  }
\]
of complexes of flat $R$-modules induces a short exact sequence $0\to \omega\otimes_R K\to \omega\otimes_R C\to (\omega\otimes_R P)[1]\to 0$.
By assumption, we obtain $\Hdim_R P<\infty$ and thus $\Hid_R (\omega\otimes_R P)<\infty$.
If $\omega\otimes_R C\simeq 0$, then $\omega\otimes_R K$ is quasi-isomorphic to $\omega\otimes_R P$.
In the remainder of the proof, we show that $H^i(\omega\otimes_R C)=0$ for all $i$.

It follows from Lemma \ref{fpd dualizing ext vanishing}(2) that $H^i(\omega\otimes_R K)\cong H^i(\RHom_R(\RHom_R(\omega, M), N))=0=H^i(\omega\otimes_R P)$ for every $i>r$, which means that $H^i(\omega\otimes_R C)=0$ for any $i>r$.

Lemma \ref{flat cpx bounded} deduces a quasi-isomorphism $\omega \otimes_R C\simeq D\otimes_R C$.
We denote the differential map $C^i\to C^{i+1}$ by $g^i$.
% By \cite[Example 2.4.17]{CFH}, it is enough to show that $(0\to \omega \to D^0\to \cdots \to D^d\to 0) \otimes_R K\simeq 0$. In general, we see that $X\otimes_R K\simeq 0$ for any bounded acyclic complex $X$ by induction on the length of $X$ and \cite[Example 2.4.17]{CFH}.
By the construction of $\Phi$ and $C$, if $i>r$, then $H^i(C)\cong H^i(K)\cong \Ext^i_R(M,N)$ and it has finite length; otherwise $H^i(C)=0$.
For any non-maximal prime ideal $\p$ of $R$, $C_\p$ is exact.
For $i\ll 0$, $g^i$ is equal to the differential map $P^{i+1}\to P^{i+2}$, and thus $\cok g^i$ is totally reflexive.
By this, $\cok g^j_\p$ has finite Gorenstein flat dimension for every integer $j$.
It follows from \cite[(5.2.15)]{C} that $\Gfd_{R_\p}(\cok g^{j+d}_\p) \le d$ and $0=\Tor_{d+1}^{R_\p}(E(R/\p), \cok g^{j+d}_\p)=\Tor_1^{R_\p}(E(R/\p), \cok g^{j}_\p)$ for any integer $j$, which means that $E(R/\p)\otimes_R C=E(R/\p)\otimes_{R_\p} C_\p\simeq 0$.
An argument similar to the proof of Theorem \ref{fpd homology} shows that $H^i(\omega \otimes_R C)\cong H^i(D\otimes_R C)\cong H^{i-d}(D^d\otimes_R C)$ for any integer $i$.

For the same reason as in the previous paragraph, we see that $\cok g^r$ has finite Gorenstein flat dimension at most $d$ since $H^i(C)=0$ for $i\le r$.
Note that $(\cdots\to C^r\to C^{r+1}\to 0)$ is a flat resolution of $\cok g^r$.
Again by \cite[(5.2.15)]{C}, we get $H^i(\omega \otimes_R C)\cong H^{i-d}(D^d\otimes_R C)\cong \Tor_{r+1-i+d}^R(D^d, \cok g^r)=0$ for $i \le r$.
The proof is now complete.
\end{proof}

The corollary below corresponds to Corollary \ref{pd or Gdim cor}.
However, a module whose localizations at all prime ideals have finite (Gorenstein) dimension does not necessarily have finite (Gorenstein) dimension itself.
This is due to the fact that the (Gorenstein) dimension of a finitely generated module over a local ring coincides with the depth of the ring; see \cite[Theorem 3.1.17]{BH} and \cite[Theorem 6.2.15]{C}.
For example, the injective dimension of an infinite-dimensional Gorenstein ring is infinite.

\begin{cor}\label{id or Gid cor}
Let $R$ be a ring, and let $M$ and $N$ be finitely generated $R$-modules.
In the case $\Hdim=\Gdim$, assume that $R_\p$ admits a dualizing complex for any $\p\in\spec R$.
Then $\Hid_{R_\p} N_\p<\infty$ for any prime ideal $\p$ of $R$ if at least one of the following holds for every maximal ideal $\m$ of $R$:
\begin{enumerate}[\rm(1)]
\item $\Hid_{R_\m} N_{\m}<\infty$
\item $M_\m\ne 0$, $\id_{R_\m} M_\m<\infty$, and $\Hdim_{R_\m} \Ext^i_{R_\m}(M_\m,N_\m)<\infty$ for any $0\le i\le\Rfd_R M$.
\end{enumerate}
In addition, if $R$ is finite-dimensional, and it admits a dualizing complex when $\Hdim=\Gdim$, then $\Hid_R N<\infty$.
\end{cor}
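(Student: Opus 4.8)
The plan is to carry out the argument of Corollary \ref{pd or Gdim cor} in the dual setting, replacing Theorem \ref{fpd homology} by Theorem \ref{fid homology} and Lemma \ref{fpd dualizing ext vanishing}(3) by Lemma \ref{fpd dualizing ext vanishing}(4). Two points require extra care: the minimal ``bad'' prime produced in the contradiction must be shown to lie in $\supp M$, because Theorem \ref{fid homology} needs $M$ nonzero; and the passage from all maximal ideals to $R$ itself, which is what the final clause demands, has to be treated separately for $\id$ and for $\Gid$.

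First I would prove that $\Hid_{R_\p} N_\p<\infty$ for every $\p\in\spec R$. Suppose not, and set $\Sigma=\{\p\in\spec R\mid \Hid_{R_\p} N_\p=\infty\}$. Since finiteness of $\Hid$ is preserved under localization (Remark \ref{Well-known fact}(3); for $\Hid=\Gid$ one applies this over the localizations of $R$, which all have dualizing complexes by hypothesis), $\Sigma$ is specialization-closed. Being nonempty, it contains a maximal ideal, and at such an ideal condition (1) fails, so (2) holds; in particular $M$ does not localize to zero there, so $\Sigma':=\Sigma\cap\supp M\ne\emptyset$. Pick a minimal element $\p$ of $\Sigma'$ and a maximal ideal $\n\supseteq\p$. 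Condition (2) holds at $\n$, and localizing at $\p$ gives $\id_{R_\p} M_\p<\infty$ and $\Hdim_{R_\p}\Ext^i_{R_\p}(M_\p,N_\p)<\infty$ for all $0\le i\le\Rfd_R M$; moreover $M_\p\ne0$ since $\p\in\supp M$, and $N_\p\ne0$ since $\Hid_{R_\p} N_\p=\infty$.

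The next step is to show that $\Ext^j_{R_\p}(M_\p,N_\p)$ has finite length for every $j>\Rfd_R M$. Let $\q\subsetneq\p$. By minimality of $\p$ in $\Sigma'$, either $M_\q=0$, in which case $\Ext^j_{R_\q}(M_\q,N_\q)=0$ for all $j$, or $M_\q\ne0$ and $\Hid_{R_\q} N_\q<\infty$; in the latter case $\id_{R_\q} M_\q<\infty$ and $\Gid_{R_\q} N_\q<\infty$, so Lemma \ref{fpd dualizing ext vanishing}(4) yields $\Ext^j_{R_\q}(M_\q,N_\q)=0$ for $j>\depth R_\q-\depth M_\q$, hence for $j>\Rfd_R M$. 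Thus for $j>\Rfd_R M$ the finitely generated $R_\p$-module $\Ext^j_{R_\p}(M_\p,N_\p)$ is supported only at the maximal ideal, so has finite length. Applying Theorem \ref{fid homology} over the local ring $R_\p$, with $r=\Rfd_R M\ge\depth R_\p-\depth M_\p$ (and with the dualizing complex of $R_\p$ when $\Hdim=\Gdim$), gives $\Hid_{R_\p} N_\p<\infty$, contradicting $\p\in\Sigma$. Hence $\Sigma=\emptyset$, which proves the displayed implication.

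For the last clause, assume in addition that $R$ is finite-dimensional, with a dualizing complex $D$ when $\Hdim=\Gdim$; by what was just shown, $\Hid_{R_\m} N_\m<\infty$ for every maximal ideal $\m$. If $\Hid=\id$, then $\id_R N=\sup\{\id_{R_\m} N_\m\mid \m\text{ maximal}\}$, and each $\id_{R_\m} N_\m$ is finite, hence at most $\dim R$ (\cite[Theorem 3.1.17]{BH}), so $\id_R N\le\dim R<\infty$. If $\Hid=\Gid$, then by Remark \ref{Well-known fact}(2) finiteness of $\Gid_R N$ is equivalent to that of $\Gpd_R\RHom_R(D,N)$, and hence---since $\RHom_R(D,N)$ has finitely generated homology---to that of $\Gdim_R\RHom_R(D,N)$; over a ring with a dualizing complex this last condition can be checked at the maximal ideals, being equivalent to reflexivity of $\RHom_R(D,N)$ with respect to the dualizing complex, and by Remark \ref{Well-known fact}(2) over each $R_\m$ it is precisely $\Gid_{R_\m} N_\m<\infty$, which holds. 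I expect the main obstacle to be this $\Gid$ globalization step, together with the bookkeeping ensuring that the hypotheses of Theorem \ref{fid homology}---that $M$ localizes to a nonzero module, that $\id_R M$ stays finite, and that the higher Ext modules have finite length---all survive the localizations above.
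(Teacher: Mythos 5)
Your argument is correct and follows essentially the same contradiction scheme as the paper: produce a minimal ``bad'' prime, use Lemma \ref{fpd dualizing ext vanishing}(4) at all strictly smaller primes to force the higher Ext modules to have finite length there, and then apply Theorem \ref{fid homology}; for the last clause you use the same bound $\id_{R_\m} N_\m\le\dim R$ from \cite[Theorem 3.1.17]{BH} in the $\id$ case and pass through $\Gdim_R(\RHom_R(D,N))$ in the $\Gid$ case. The only cosmetic difference is that you minimize over $\Sigma\cap\supp M$ and then import the conditions $\id_{R_\p}M_\p<\infty$ and $\Hdim_{R_\p}\Ext^i_{R_\p}(M_\p,N_\p)<\infty$ from a maximal ideal above $\p$, whereas the paper bakes those conditions into the set $S$ it minimizes over — both are valid and yield the same prime.
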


\begin{proof}
Assume that $\Hid_{R_\m} N_\m=\infty$ for some maximal ideal $\m$ of $R$.
By assumption, $\m$ belongs to $S:=\{\p\in\spec R\mid \Hid_{R_\p} N_\p=\infty, M_\p\ne 0, \id_{R_\p} M_\p<\infty, \Hdim_{R_\p} \Ext^i_{R_\p}(M_\p,N_\p)<\infty$ for any $0\le i\le\Rfd_R M\}$, so we can take a minimal element $\p$ of $S$.
Let $\q$ be a prime ideal of $R$ with $\q\subsetneq \p$.
Since $\id_{R_\p} N_\p<\infty$ and $\Hdim_{R_\p} \Ext^i_{R_\p}(M_\p,N_\p)<\infty$ for any $0\le i\le\Rfd_R M$, we have $\id_{R_\q} N_\q<\infty$ and $\Hdim_{R_\q} \Ext^i_{R_\q}(M_\q,N_\q)<\infty$ for any $0\le i\le\Rfd_R M$.
However, $\q$ is not in $S$ by the minimality of $\p$ in $S$.
Thus either $\Hid_{R_\q} N_\q<\infty$ or $M_\q=0$ holds.
Lemma \ref{fpd dualizing ext vanishing}(4) forces $\Ext^j_{R_\q}(M_\q,N_\q)=0$ for any $j>\Rfd_R M$.
Therefore $\Ext^j_{R_\p}(M_\p,N_\p)$ has finite length for any $j>\Rfd_R M$.
Theorem \ref{fid homology} concludes $\Hid_{R_\p} N_\p<\infty$.
This contradicts $\p\in S$.
It follows from the above that $\Hid_{R_\m} N_\m<\infty$ for any maximal ideal $\m$ of $R$, which induces that $\Hid_{R_\p} N_\p<\infty$ for any prime ideal $\p$ of $R$.

In the case $\Hdim=\pd$, if $R$ is finite-dimensional, then $\id_{R_\p} N_\p\le \dim R$ for any $\p\in\spec R$, which means $\id_R N<\infty$; see \cite[Theorem 3.1.17]{BH}.
Assume that $R$ admits a dualizing complex $D$ when $\Hdim=\Gdim$.
Then $\Gdim_{R_\p} (\RHom_R(D, N)_\p)<\infty$ for any $\p\in\spec R$, and hence $\Gdim_R (\RHom_R(D, N))<\infty$, which gives $\Gid_R N<\infty$.
\end{proof}

The dual versions of Corollaries \ref{pd or Gdim cor 2}, \ref{pd or Gdim cor 2-2}, and \ref{Rfd CM 1} can be derived in the same way.
These state that Question \ref{update question} (and hence Question \ref{HJM Q6.9}) has an affirmative answer.

\begin{cor}\label{id or Gid cor 2}
Let $R$ be a local ring, $M$ and $N$ nonzero finitely generated $R$-modules with $\id_R M<\infty$ and $\Hdim_R\Ext^i_R(M,N)<\infty$ for any $0\le i\le \depth R-\depth M$.
In the case $\Hdim=\Gdim$, assume that $R$ admits a dualizing complex.
Then $\Hid_R N<\infty$.
\end{cor}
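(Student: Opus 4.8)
The plan is to reduce Corollary \ref{id or Gid cor 2} to the already-established Theorem \ref{fid homology} by a localization argument essentially identical to the proof of Corollary \ref{id or Gid cor}, restricted to the case of a single local ring. First I would observe that since $M$ is a nonzero finitely generated module of finite injective dimension, the ring $R$ is Cohen--Macaulay by \cite[Corollary 9.6.2, Remark 9.6.4(a)]{BH}; hence by Lemma \ref{Rfd CM lemma} we have $\Rfd_R M=\depth R-\depth M$, so the hypothesis $\Hdim_R\Ext^i_R(M,N)<\infty$ for $0\le i\le \depth R-\depth M$ is precisely the hypothesis $\Hdim_R\Ext^i_R(M,N)<\infty$ for $0\le i\le \Rfd_R M$ appearing in Corollary \ref{id or Gid cor}.

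Next I would verify the hypotheses of Corollary \ref{id or Gid cor} with the single maximal ideal $\m$ of $R$: condition (2) holds since $M=M_\m\ne 0$, $\id_R M<\infty$, and $\Hdim_R\Ext^i_R(M,N)<\infty$ for all $0\le i\le\Rfd_R M$ by the discussion above. In the case $\Hdim=\Gdim$, the standing assumption that $R$ admits a dualizing complex guarantees that $R_\p$ admits a dualizing complex for every $\p\in\spec R$ (localizations of dualizing complexes are dualizing), so the extra hypothesis of Corollary \ref{id or Gid cor} is met. Applying Corollary \ref{id or Gid cor} then yields $\Hid_{R_\p} N_\p<\infty$ for all $\p\in\spec R$; in particular $\Hid_R N=\Hid_{R_\m} N_\m<\infty$, and since $R$ is local (hence finite-dimensional, being Cohen--Macaulay with finite-length-injective-dimension module $M$ forcing $\dim R=\depth R<\infty$), the final clause of Corollary \ref{id or Gid cor} also applies directly to give $\Hid_R N<\infty$.

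Alternatively, and more directly, one can bypass Corollary \ref{id or Gid cor} entirely: having noted $\Rfd_R M=\depth R-\depth M$, set $r=\depth R-\depth M$. Since $\id_R M<\infty$ and $\Gid_R N<\infty$ is not yet known, one cannot immediately invoke Lemma \ref{fpd dualizing ext vanishing}(4); instead, to get that $\Ext^j_R(M,N)$ has finite length for $j>r$, one uses that $M$ being nonzero of finite injective dimension over a local ring forces $R$ Cohen--Macaulay and then argues at each non-maximal prime $\p$ that $M_\p$ has finite injective dimension and $\Ext^i_{R_\p}(M_\p,N_\p)$ has finite homological dimension for $i$ up to $\Rfd_R M\ge\depth R_\p-\depth M_\p$, which by induction on $\dim R$ (the minimality argument of Corollary \ref{id or Gid cor}) gives $\Hid_{R_\p}N_\p<\infty$, whence $\Ext^j_{R_\p}(M_\p,N_\p)=0$ for $j>r$ by Lemma \ref{fpd dualizing ext vanishing}(4). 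With this finiteness of length in hand, Theorem \ref{fid homology} applies and yields $\Hid_R N<\infty$.

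The only genuinely delicate point is the one already handled inside Theorem \ref{fid homology} and Corollary \ref{id or Gid cor}: controlling the Ext modules at non-maximal primes so as to deduce finite length in top degrees, which requires the Noetherian induction on the support together with Lemma \ref{fpd dualizing ext vanishing}(4). Since that work is done in the results cited, the proof of Corollary \ref{id or Gid cor 2} itself is a routine specialization — identify $\Rfd_R M$ with $\depth R-\depth M$ via Cohen--Macaulayness, check the dualizing-complex hypothesis localizes, and quote Corollary \ref{id or Gid cor}.
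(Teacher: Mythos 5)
Your proposal is correct and takes essentially the same route as the paper's own proof, which simply notes that $R$ is Cohen--Macaulay (as in the proof of Theorem \ref{fid homology}), applies Lemma \ref{Rfd CM lemma} to identify $\Rfd_R M$ with $\depth R-\depth M$, and then invokes Corollary \ref{id or Gid cor}. Your first paragraph is precisely this argument, with the hypotheses spelled out; the alternative direct reduction to Theorem \ref{fid homology} you sketch afterward is a valid unwinding of the same chain of results.
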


\begin{proof}
As in the proof of Theorem \ref{fid homology}, $R$ is Cohen--Macaulay.
The claim follows from Lemma \ref{Rfd CM lemma} and Corollary \ref{id or Gid cor}.
\end{proof}

\begin{cor}\label{id or Gid cor 2-2}
Let $R$ be a local ring, $M$ and $N$ nonzero finitely generated $R$-modules with $\id_R M<\infty$, and $0\le r\le \depth R-\depth M$.
Suppose that $\Hdim_R\Ext^i_R(M,N)<\infty$ for any $0\le i\le r$, and that $\Ext^j_R(M,N)=0$ for any $r<j \le \depth R-\depth M$.
In the case $\Hdim=\Gdim$, assume that $R$ admits a dualizing complex.
Then $\Hid_R N<\infty$ and $r=\Rfd_R M=\depth R-\depth M$.
\end{cor}

\begin{proof}
As in the proof of Theorem \ref{fid homology}, $R$ is Cohen--Macaulay.
When $\Hdim=\pd$, we can replace $R$ with its completion, so we may assume that $R$ admits a canonical module $\omega$ in order to prove theorem.
Corollary \ref{id or Gid cor 2} shows $\Hid_R N<\infty$.
Put $s:=\pd_R(\Hom_R(\omega, M))$.
Applying $(-)\otimes_R \omega$ to a minimal free resolution $F=(0\to F_s \to \cdots \to F_0\to 0)$ of $\Hom_R(\omega, M)$, we obtain an exact sequence 
$$
0\to F_s\otimes_R \omega \to \cdots \to F_0\otimes_R \omega \to \Hom_R(\omega, M)\otimes_R \omega \cong M \to 0.
$$
It follows from the depth lemma that $\depth M\ge \depth R-s$.
If this inequality is strict, then $F_s\otimes_R \omega \to F_{s-1}\otimes_R \omega$ would split, which is a contradiction.
We get $s=\depth R-\depth M$.
As $\id_R M<\infty$ and $\Hid_R N<\infty$, we have a quasi-isomorphism
\begin{align*}
\RHom_R(\Hom_R(\omega, M), \Hom_R(\omega, N))&\simeq \RHom_R(\RHom_R(\omega, M), \RHom_R(\omega, N)) \\
&\simeq \RHom_R(\RHom_R(\omega, M)\otimes_R^\L\omega, N) \simeq \RHom_R(M, N);
\end{align*}
see \cite[Section 3.4]{C} for instance.
This implies $\Ext^j_R(\Hom_R(\omega, M), \Hom_R(\omega, N))\cong\Ext^j_R(M,N)=0$ for any $r<j \le s$.
Lemma \ref{pd G-dim keisan} forces $s=\pd_R(\Hom_R(\omega, M))=r$.
\end{proof}

There are analogous results in the graded case. 
Corollary \ref{id or Gid cor 3-2} follows immediately from Corollaries \ref{id or Gid cor 2-2} and \ref{id or Gid cor 3}.

\begin{cor}\label{id or Gid cor 3}
Let $R$ be a *local ring, $M$ and $N$ nonzero finitely generated graded $R$-modules with $\id_R M<\infty$ and $\Hdim_R\Ext^i_R(M,N)<\infty$ for any $0\le i\le \depth R-\depth M$.
In the case $\Hdim=\Gdim$, assume that $R$ admits a dualizing complex.
Then $\Hid_R N<\infty$.
\end{cor}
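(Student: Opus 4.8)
The plan is to reduce to the local case, Corollary~\ref{id or Gid cor 2}, by localizing at the unique *maximal ideal $\m$ of $R$, just as Corollary~\ref{pd or Gdim cor 3} is reduced to Corollary~\ref{pd or Gdim cor 2}. As in the proof of Theorem~\ref{fid homology}, the hypothesis $\id_R M<\infty$ with $M\ne 0$ forces $R$ to be Cohen--Macaulay. The hypotheses then localize: $M_\m$ and $N_\m$ are nonzero; graded depth coincides with the depth of the localization at $\m$, so $\depth R_\m-\depth M_\m=\depth R-\depth M$ (see \cite[Section 1.5]{BH}); injective dimension does not grow under localization, so $\id_{R_\m}M_\m<\infty$; and $(\Ext^i_R(M,N))_\m\cong\Ext^i_{R_\m}(M_\m,N_\m)$ together with Remark~\ref{Well-known fact}(3) gives $\Hdim_{R_\m}\Ext^i_{R_\m}(M_\m,N_\m)<\infty$ for $0\le i\le\depth R-\depth M$. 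In the case $\Hdim=\Gdim$, the localization $D_\m$ of the assumed dualizing complex is a dualizing complex for $R_\m$. Hence Corollary~\ref{id or Gid cor 2} applies over $R_\m$ and gives $\Hid_{R_\m}N_\m<\infty$.

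It remains to descend, i.e.\ to pass from $\Hid_{R_\m}N_\m<\infty$ to $\Hid_R N<\infty$; this is the graded counterpart of the totally reflexive syzygy argument at the end of the proof of Corollary~\ref{pd or Gdim cor 3}. When $\Hid=\id$, I would appeal to the theory of *injective resolutions of graded modules: over a Noetherian *local ring, finiteness of the injective dimension of a finitely generated graded module is equivalent to finiteness of its *injective dimension and is read off the graded Bass numbers, hence detected after localizing at $\m$ (see \cite[Section 3.6]{BH}). When $\Hid=\Gid$: by Remark~\ref{Well-known fact}(2) it suffices to prove $\Gdim_R\RHom_R(D,N)<\infty$, and we know $\Gdim_{R_\m}\RHom_{R_\m}(D_\m,N_\m)<\infty$. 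Choosing a resolution $P\to D$ by a complex of finitely generated graded free $R$-modules bounded to the right (possible since $D$ has finitely generated homology), $\RHom_R(D,N)\simeq\Hom_R(P,N)$ is a complex of finitely generated graded $R$-modules bounded to the left; since any nonzero finitely generated graded module over a *local ring has a homogeneous associated prime contained in $\m$, the homology of $\Hom_R(P,N)$ is finitely generated and vanishes wherever its localization at $\m$ does, hence is bounded. Thus $\RHom_R(D,N)$ has bounded, finitely generated homology, so $\Gdim_R\RHom_R(D,N)<\infty$ exactly when a sufficiently high graded syzygy module $\Omega$ of a resolution of $\RHom_R(D,N)$ by finitely generated graded free $R$-modules is totally reflexive over $R$. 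By \cite[Proposition 1.5.15(c)]{BH}, $\Omega$ is totally reflexive over $R$ if and only if $\Omega_\m$ is totally reflexive over $R_\m$, i.e.\ if and only if $\Gdim_{R_\m}\RHom_{R_\m}(D_\m,N_\m)<\infty$; and this holds. Applying Remark~\ref{Well-known fact}(2) once more yields $\Gid_R N<\infty$.

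I expect the descent step to be the only genuine obstacle. For $\Hid=\Gid$ the delicate point is arranging that $\RHom_R(D,N)$ has bounded, finitely generated graded homology, so that \cite[Proposition 1.5.15(c)]{BH} can be applied to an honest finitely generated graded syzygy module; this is exactly where the Cohen--Macaulayness of $R$ and the dualizing complex hypothesis enter. For $\Hid=\id$ the issue is to identify the correct statement that finiteness of the injective dimension of a finitely generated graded module is detected at the *maximal ideal; unlike the projective analogue invoked in Corollary~\ref{pd or Gdim cor 3}, it passes through the *injective dimension, which may differ from the ordinary one by one, so it has to be cited with some care. Granting these points, the remainder is formal and mirrors the proof of Corollary~\ref{pd or Gdim cor 3}.
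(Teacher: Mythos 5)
Your overall strategy — localize at the unique *maximal ideal $\m$, invoke Corollary~\ref{id or Gid cor 2} over $R_\m$, then descend — is exactly the paper's. The reduction step and the application of the local corollary are handled correctly; the issue is in the descent.

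For $\Hid=\Gid$ there is a genuine gap. The hypothesis gives a dualizing complex $D$ for $R$, but nothing guarantees that $D$ is a \emph{graded} dualizing complex (that is, quasi-isomorphic to a complex of graded modules with graded differentials). Your argument takes ``a resolution $P\to D$ by a complex of finitely generated graded free $R$-modules'' and then treats $\Hom_R(P,N)$ as a complex of graded modules whose homology has a homogeneous associated prime — none of this is available if $D$ is not graded. The paper handles exactly this point by first invoking Kawasaki's theorem \cite[Corollary 1.4]{Kaw}: since $R$ admits a dualizing complex, $R$ is a quotient of a finite-dimensional Gorenstein ring $S$; composing with $S\twoheadrightarrow R_0$ and lifting through a suitable localization $S_\q$, one exhibits $R$ as a quotient of a Gorenstein \emph{*local} ring, from which a graded dualizing complex $D$ is built. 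Only with this graded $D$ in hand can one run the $\RHom_R(D,N)$ reduction and cite \cite[Proposition 1.5.15(c)]{BH} on a graded syzygy, as you propose. So the descent idea is right, but you must first replace the assumed $D$ by a graded one.

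For $\Hid=\id$ you correctly flag the subtlety that *injective dimension and injective dimension may differ, but you do not resolve it. The paper avoids *injective resolutions entirely: it cites \cite[Proposition 3.6.6]{BH} to conclude that the $(d+1)$-th Bass number $\mu^{d+1}(\p,N)$ vanishes for every prime $\p$ (where $d=\dim R$), whence $\id_R N\le d$ outright. That is the clean route; your sketch leaves the statement you need unidentified.
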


\begin{proof}
Let $\m$ be the *maximal ideal ideal of $R$.
Since $\m$ is a maximal ideal ideal of $R$, applying Corollary \ref{id or Gid cor 2} to $R_\m$, we obtain $\Hid_{R_\m} N_\m<\infty$, which implies $\Hid_R N<\infty$.
In fact, when $\Hid=\id$, it follows from \cite[Proposition 3.6.6]{BH} that for any prime ideal $\p$ of $R$, the $(d+1)$-th Bass number of $N$ with respect $\p$ vanishes, where $d=\dim R$. %*local はbase ring がlocalより有限次元
So $\id_R N\le d$.
We deal with the case $\Hid=\Gid$.
If $R$ admits a dualizing complex, then it is a homomorphic image of a finite-dimensional Gorenstein ring $S$; see \cite[Corollary 1.4]{Kaw}.
There is the natural surjection from $S$ to the local base ring $R_0$.
There exists a maximal ideal $\q$ of $S$ such that $R_0$ of $R$ is a homomorphic image of a Gorenstein local $S_\q$.
By this, $R$ is a homomorphic image of some graded polynomial ring over $S_\q$, which is Gorenstein *local.
Let $D$ be the graded dualizing complex obtained in this way.
Reducing to the G-dimension case via $\RHom_R(D,N)$, it follows that $\Gid_R N<\infty$ if and only if $\Gid_{R_\m} N_\m<\infty$ by an argument similar to the proof of Corollary \ref{pd or Gdim cor 3}.
\end{proof}

\begin{cor}\label{id or Gid cor 3-2}
Let $R$ be a *local ring, $M$ and $N$ nonzero finitely generated graded $R$-modules with $\id_R M<\infty$, and $0\le r\le \depth R-\depth M$.
Suppose that $\Hdim_R\Ext^i_R(M,N)<\infty$ for any $0\le i\le r$, and that $\Ext^j_R(M,N)=0$ for any $r<j \le \depth R-\depth M$.
In the case $\Hdim=\Gdim$, assume that $R$ admits a dualizing complex.
Then $\Hid_R N<\infty$ and $r=\Rfd_R M=\depth R-\depth M$.
\end{cor}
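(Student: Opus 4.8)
The plan is to reduce the statement to the local case: one localizes at the unique *maximal ideal $\m$ of $R$ and applies the local result, Corollary~\ref{id or Gid cor 2-2}, over $R_\m$, exactly as in the proofs of Corollaries~\ref{pd or Gdim cor 3} and \ref{id or Gid cor 3}. Since $\m$ is an honest maximal ideal of $R$, the pair $(R_\m,\m R_\m)$ is a local ring, we have $\id_{R_\m}M_\m<\infty$, and $\Ext^i_{R_\m}(M_\m,N_\m)\cong\Ext^i_R(M,N)_\m$ for every $i$; hence $\Hdim_{R_\m}\Ext^i_{R_\m}(M_\m,N_\m)<\infty$ for $0\le i\le r$ while $\Ext^j_{R_\m}(M_\m,N_\m)=0$ for $r<j\le\depth R-\depth M$. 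For a finitely generated graded module over a *local ring one has $\depth R=\depth R_\m$ and $\depth M=\depth_{R_\m}M_\m$, so this last range is the same as $r<j\le\depth R_\m-\depth_{R_\m}M_\m$; and if $R$ admits a dualizing complex (needed when $\Hdim=\Gdim$), then so does $R_\m$. Thus Corollary~\ref{id or Gid cor 2-2} applies over $R_\m$ and yields both $\Hid_{R_\m}N_\m<\infty$ and $r=\Rfd_{R_\m}M_\m=\depth R_\m-\depth_{R_\m}M_\m$.

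It then remains to transfer these conclusions to $R$. For the numerical equality, the depth identities above give $\depth R_\m-\depth_{R_\m}M_\m=\depth R-\depth M$, while $\Rfd_{R_\m}M_\m=\Rfd_R M$ holds because, by \cite[Theorem 1.5.9]{BH}, the supremum defining $\Rfd_R M$ is attained at a graded prime ideal, and every graded prime of $R$ lies in $\m$, hence is visible in $R_\m$; together these give $r=\Rfd_R M=\depth R-\depth M$. For the finiteness of $\Hid_R N$ I would invoke Corollary~\ref{id or Gid cor 3}: with the convention that the zero module has finite $\Hdim$, the hypotheses of the present statement force $\Hdim_R\Ext^i_R(M,N)<\infty$ for \emph{all} $0\le i\le\depth R-\depth M$, which are precisely the hypotheses of that corollary. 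Alternatively, one reruns its proof---for $\Hid=\id$, the nonzero module $M$ of finite injective dimension makes $R$ Cohen--Macaulay, hence finite-dimensional, so $\id_R N\le\dim R$ by the vanishing of high Bass numbers \cite[Proposition 3.6.6]{BH}; for $\Hid=\Gid$, one reduces along $\RHom_R(D,N)$ for a \emph{graded} dualizing complex $D$, finiteness of whose G-dimension is detected at $\m$ via \cite[Proposition 1.5.15(c)]{BH}.

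The only genuine subtleties I anticipate are bookkeeping ones: verifying that the depth of a graded module over a *local ring agrees with the depth of its localization at the *maximal ideal, and---more delicately---that $\Rfd_R M=\Rfd_{R_\m}M_\m$, for which \cite[Theorem 1.5.9]{BH} is essential since a priori the supremum defining $\Rfd_R M$ runs over all primes of $R$, and non-graded primes disappear upon localizing at $\m$. The $\Gid$ reduction is the same device already used for Corollaries~\ref{pd or Gdim cor 3} and \ref{id or Gid cor 3}, so nothing new is required there; granting these points, the statement is immediate from the two cited corollaries.
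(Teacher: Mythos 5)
Your proposal is correct and follows essentially the same route as the paper, which simply states that Corollary~\ref{id or Gid cor 3-2} follows from Corollaries~\ref{id or Gid cor 2-2} and~\ref{id or Gid cor 3}: localize at the *maximal ideal to get the numerical identity from the local Corollary~\ref{id or Gid cor 2-2} (using, as the paper notes before Corollary~\ref{pd or Gdim cor 3-2}, that $\Rfd$ is governed by graded primes via \cite[Theorem 1.5.9]{BH}), and obtain $\Hid_R N<\infty$ from Corollary~\ref{id or Gid cor 3} since the vanishing Ext modules trivially have finite $\Hdim$.
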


\section{Examples}

In the remainder of this paper, we present examples related to the results obtained herein.
The examples we wish to confirm the existence of finitely generated modules $M$ and $N$ such that 
\begin{itemize}
\item $\pd_R N<\infty$, $\Hdim_R M<\infty$, and $\Hdim_R(\Ext_R^i(M,N))<\infty$ for all integers $i$, or 
\item $\id_R M<\infty$, $\Hid_R N<\infty$, and $\Hdim_R(\Ext_R^i(M,N))<\infty$ for all integers $i$.
\end{itemize}
Representative examples are provided in \cite[Section 1]{HJM}, and it has been established that our results have significance.
In this section, we slightly generalize them and provide several nontrivial examples concretely.
The underlying method is as follows.

\begin{emp}\label{underlying methods}
Let $R$ be a ring and let $K$, $L$, $M$ and $N$ be finitely generated $R$-modules. 

(1) Recall that ${\rm inf}\{i \mid \Ext^i_R(M,R)\ne 0 \}$ is called the \textit{grade} of $M$ and is denoted by $\grade_R M$. The (nonzero) module $M$ is said to be \textit{$\H$-perfect of grade $g$} if $g=\grade_R M=\Hdim_R M$. If $M$ is $\H$-perfect of grade $g$, then $\Hdim_R (\Ext^g_R(M,R))=g$ and $\Ext^i_R(M,R)=0$ for $i\ne g$; see \cite[Example 1.5]{HJM} for instance. 
(It is simply called perfect when $\Hdim=\pd$.)
For an $R$-regular sequence $x_1,\ldots,x_g$, the quotient $R/(x_1,\ldots,x_g)$ is perfect of grade $g$.

(2) Suppose that there is an exact sequence $0\to K\to L\to M\to 0$ of $R$-modules.
For any $i$, we obtain an exact sequence
$$
\Ext^{i-1}_R(K,N) \to \Ext^i_R(M,N) \to \Ext^i_R(L,N) \to \Ext^i_R(K,N) \to \Ext^{i+1}_R(M,N).
$$
By this, the finiteness of the homological dimensions of $\Ext^i_R(L,N)$ is often characterized by those of $\Ext^i_R(K,N)$ and $\Ext^i_R(M,N)$.
Assume that $N$ is projective, $M$ is $\H$-perfect of grade $m$, and $K$ is $\H$-perfect of grade $k$.
If $m<k$ or $m>k+1$, then $\Ext^m_R(L,N)\cong \Ext^m_R(M,N)$, $\Ext^k_R(L,N)\cong \Ext^k_R(K,N)$, and $\Ext^i_R(L,N)=0$ for $i\notin \{m,k\}$.
If $m=k$, then $L$ is also $\H$-perfect of grade $m$.
In particular, in the two cases above, $\Hdim_R (\Ext^i_R(L,N))<\infty$ for all integer $i$.
Suppose $m=k+1$.
Then $\Ext^i_R(L,N)=0$ for $i\notin \{m,k\}$ and there is an exact sequence
$$
0 \to \Ext^k_R(L,N) \to \Ext^k_R(K,N) \xrightarrow{\phi} \Ext^m_R(M,N) \to \Ext^m_R(L,N)\to 0.
$$
Therefore, whether either $\Ext^k_R(L,N)$ or $\Ext^m_R(L,N)$ (or equivalently both) have finite homological dimension depends on the connected homomorphism $\phi$.

(3) Suppose that $R$ is a Cohen--Macaulay ring with canonical module $\omega$, and that for all integers $i$, $\Hdim_R (\Ext^i_R(M,N))<\infty$, $\operatorname{\widehat{H}-dim}_R (M)<\infty$, and $\operatorname{\overline{H}-dim}_R (N)<\infty$.
(Here, $\operatorname{\widehat{H}-dim}$ and $\operatorname{\overline{H}-dim}$ also refer to either the projective dimension or the G-dimension, and the reason for changing the notation is to emphasize that it is harmless for each of them to independently refer to either the projective dimension or the G-dimension.)
Then 
\begin{align*}
\RHom_R(M\otimes_R \omega, N\otimes_R \omega)\simeq \RHom_R(M\otimes_R^\L \omega, N\otimes_R^\L \omega)&\simeq \RHom_R(M, \RHom_R(\omega, N\otimes_R^\L \omega)) \\
&\simeq \RHom_R(M,N).
\end{align*}
We have $\operatorname{\widehat{H}id}_R (M\otimes_R \omega)<\infty$, $\operatorname{\overline{H}id}_R (N\otimes_R \omega)<\infty$, and $\Hdim_R (\Ext^i_R(M\otimes_R \omega, N\otimes_R \omega))<\infty$ for all $i$.
Conversely, if $\operatorname{\widehat{H}id}_R (M)<\infty$ and $\operatorname{\overline{H}id}_R (N)<\infty$, then there is a quasi-isomorphism
\begin{align*}
\RHom_R(\Hom_R(\omega, M), \Hom_R(\omega, N))&\simeq \RHom_R(\RHom_R(\omega, M), \RHom_R(\omega, N))\\
&\simeq \RHom_R(\RHom_R(\omega, M)\otimes_R^\L\omega, N))\\
&\simeq \RHom_R(M,N),
\end{align*}
with $\operatorname{\widehat{H}-dim}_R (\Hom_R(\omega, M))<\infty$ and $\operatorname{\overline{H}-dim}_R (\Hom_R(\omega, N))<\infty$.
Therefore, constructing examples about (Gorenstein) injective dimension is essentially no different from constructing examples about (Gorenstein) projective dimension.
Hence, in this section, we concentrate only on examples related to (Gorenstein) projective dimension.

Note that when $M$ and $N$ do not necessarily have finite projective or injective dimension, $M\otimes_R^\L \omega$ and $\RHom_R(\omega, N)$ are not even guaranteed to be bounded complexes.
This is the reason why Corollaries \ref{pd or Gdim cor 2} and \ref{id or Gid cor 2} cannot be easily deduced from \cite[Theorems 3.3 and 3.6]{HJM}.

(4) In this section, we use the well-known Horseshoe lemma several times. Since the form of the involved morphisms is also important, we briefly recall the outline of its proof here for the reader's convenience.
Suppose that $P_1 \xrightarrow{a} P_0 \xrightarrow{b} K \to 0$, $Q_1 \xrightarrow{c} Q_0 \xrightarrow{d} M \to 0$, and $0\to K \xrightarrow{e} L \xrightarrow{f} M \to 0$ are exact sequences of finitely generated $R$-modules, where $Q_0, Q_1$ are projective.
Since $Q_0$ is projective and $f$ is surjective, there is $g: Q_0\to L$ such that $fg=d$.
We have $\image (gc) \subseteq \ker f=\image e$.
Since $Q_1$ is projective and $b$ is surjective, there is $h: Q_1\to P_0$ such that $ebh=gc$. 
There is a commutative diagram 
\[
  \xymatrix@C=30pt@R=30pt
  {
    0  \ar[r]
    & P_1 \ar[r]_{\tiny \begin{pmatrix} 1 \\ 0 \end{pmatrix}} \ar[d]^a
    & P_1\oplus Q_1 \ar[r]_{\tiny \begin{pmatrix} 0& \! \! \! \! 1 \end{pmatrix}} \ar[d]^{\tiny \begin{pmatrix} a &\! \! \! \! -h \\ 0 &\! \! \! \! c \end{pmatrix}}
    & Q_1 \ar[r] \ar[d]^c
    & 0 \\
    0  \ar[r]
    & P_0 \ar[r]_{\tiny \begin{pmatrix} 1 \\ 0 \end{pmatrix}} \ar[d]^b
    & P_0\oplus Q_0 \ar[r]_{\tiny \begin{pmatrix} 0& \! \! \! \! 1 \end{pmatrix}} \ar[d]^{\tiny \begin{pmatrix} eb& \! \! \! \! g \end{pmatrix}}
    & Q_0 \ar[r] \ar[d]^d
    & 0 \\
    0  \ar[r]
    & K \ar[r]^e \ar[d]
    & L \ar[r]^f \ar[d]
    & M \ar[r] \ar[d]
    & 0 \\
    & 0
    & 0
    & 0
    &  \\
  }
\]
with exact rows and columns.

(5) We recall matrix factorizations, which are useful for constructing totally reflexive modules.
Let $S$ be a ring, $f\in S$ a nonzerodivisor on $S$, and $R=S/(f)$.
Suppose $A$ and $B$ are $n \times n$ matrices with entries in $S$ such that $AB=BA=fE$, where $E$ is the identity matrix.
As $fE$ is injective, so are $A$ and $B$.
For $z\in S^n$, $Az\in fS^n=AB S^n$ implies $z\in B S^n$, which means that the complex $(\cdots \xrightarrow{A} R^n \xrightarrow{B} R^n \xrightarrow{A} R^n \xrightarrow{B} \cdots)$ is exact.
Since the transpose matrices of $A$ and $B$, denoted ${}^t\!A$ and ${}^t\!B$, also satisfy ${}^t\!A {}^t\!B={}^t\!B {}^t\!A=fE$, one can verify that $(\cdots \xrightarrow{{}^t\!A} R^n \xrightarrow{{}^t\!B} R^n \xrightarrow{{}^t\!A} R^n \xrightarrow{{}^t\!B} \cdots)$ is exact, which means $\cok A$ and $\cok B$ are totally reflexive over $R$.
\end{emp}

Now, let us explicitly construct the examples.
The examples of quotient rings of polynomial rings appearing below can be treated in the same way even if they are quotients of formal power series rings, that is, the following discussion covers both the local case and the graded case from the previous section.

\begin{ex}\label{nontrivial example A}
Let $R$ be a ring, $x,y$ an $R$-regular sequence in $R$, and $z\in R$ a nonzerodivisor on $R$.
There is a short exact sequence $0\to R/(x,y) \xrightarrow{z} R/(xz,yz) \to R/(z) \to 0$.
Since $R/(x,y)$ is perfect of grade 2 and $R/(z)$ is perfect of grade 1, we have 
$$
\Ext^i_R(R/(xz,yz), R)=
\begin{cases}
\Ext^i_R(R/(z), R)\cong R/(z) & (i=1), \\
\Ext^i_R(R/(x,y), R)\cong R/(x,y) & (i=2), \\
0 & ({\rm otherwise}).
\end{cases}
$$
We see that $0 \to R \xrightarrow{\tiny \begin{pmatrix} -y \\ x \end{pmatrix}} R^2 \xrightarrow{\tiny \begin{pmatrix} xz &\! \! \! \! yz \end{pmatrix}} R \to 0$ is a free resolution of $R/(xz,yz)$; see \cite[Theorem 1.4.13]{BH} for instance.
In particular, $\pd_R (R/(xz,yz))<\infty$ and $\pd_R(\Ext_R^i(R/(xz,yz),R))<\infty$ for all integers $i$.
We now develop this one step further.

For any $a\in R$, there exists a pushout diagram
\[
  \xymatrix@C=30pt@R=20pt
  {
    0  \ar[r]
    & R \ar[r]^{\phi:=\tiny \begin{pmatrix} xz \\ xy \end{pmatrix}} \ar[d]_\alpha
    & R^2 \ar[r] \ar[d]^\beta
    & C \ar[r] \ar@{=}[d]
    & 0 \\
    0  \ar[r]
    & A:=R/(xz,yz) \ar[r]^{\qquad \gamma}
    & B \ar[r]^\delta
    \ar@{}[ul]|{\mathrm{PO}}
    & C \ar[r] 
    & 0,
  }
\]
where $\alpha(r)=ar+(xz,yz)$ and $C:=\cok \phi$.
Note that $\phi$ is injective as $x$ and $z$ are nonzerodivisors on $R$.
We get $\Hom_R(C,R)=\ker (\Hom_R(\phi,R))\cong R$ and $\Ext^1_R(C,R)=\cok (\Hom_R(\phi,R))\cong A$ by the free resolution of $A$.
(This construction of $C$ is the same as that in \cite[Example 1.6]{HJM}.)
Then
\begin{itemize}
\item $\Hom_R(B, R)\cong \Hom_R(C,R)\cong R$, 
\item there is an exact sequence $0\to \Ext^1_R(C, R)\cong A \to \Ext^1_R(B, R) \to \Ext^1_R(A, R)\cong R/(z)\to 0$,
\item $\Ext^2_R(B, R) \cong \Ext^2_R(A, R)\cong R/(x,y)$,
\item $\Ext^i_R(B, R)=0$ for $i\ge 3$.
\end{itemize}
Therefore, $B$ and $\Ext^i_R(B, R)$ have finite projective dimension for any integer $i$.
Applying \ref{underlying methods}(4), we can explicitly give a presentation of $B$, which is
$$
0 \to R \xrightarrow{\tiny \begin{pmatrix} -y \\ x\\ 0 \end{pmatrix}} R^3 \xrightarrow{\tiny \begin{pmatrix} xz &\! \! \! \! yz & \! \! \! \! a \\ 0 &\! \! \! \! 0 & \! \! \! \! xz \\ 0 &\! \! \! \! 0 & \! \! \! \! yz \end{pmatrix}} R^3 \xrightarrow{\beta} B \to 0.
$$

Let $R=k[x,y,z,v,w]/(xy,zw,zv)$ be a quotient of a polynomial ring over a field $k$.
Then $R$ is of dimension 3 and is not Cohen--Macaulay. 
Now $x+y, z+w$ is an $R$-regular sequence and $z+v$ is a nonzerodivisor on $R$.
By the previous paragraph, the cokernel of the following homomorphism satisfies the hypotheses of Corollary \ref{pd or Gdim cor 3}, where $\Hdim=\pd$ and $N=R$:
$$
R^3 \xrightarrow{\tiny \begin{pmatrix} xz+yz+xv+yv &\! \! \! \! z^2+vw & \! \! \! \! v^2+w^2 \\ 0 &\! \! \! \! 0 & \! \! \! \! xz+yz+xv+yv \\ 0 &\! \! \! \! 0 & \! \! \! \! z^2+vw \end{pmatrix}} R^3.
$$
Of course, the top-right (homogeneous) entry of the matrix can be arbitrary.
\end{ex}

\begin{ex}
Let $R$ be a ring, and $x,y$ an $R$-regular sequence in $R$.
Choose nonzerodivisors $a, b$ in $(x,y)R$ and elements $c,d\in R$. 
We write $a=a_1 x+a_2 y$ and $b=b_1 x+b_2 y$ for some $a_1,a_2,b_1,b_2\in R$.
Then there exists a commutative diagram 
\[
  \xymatrix@C=80pt@R=30pt
  {
    &
    & 0 \ar[d]
    & 0 \ar[d]
    &  \\
    & 0 \ar[r] \ar[d]
    & R \ar@{=}[r] \ar[d]_{\tiny \begin{pmatrix} c \\ d \\ -y \\ x\end{pmatrix}}
    & R \ar[r] \ar[d]^{\tiny \begin{pmatrix} -y\\ x \end{pmatrix}} 
    & 0 \\
    0  \ar[r]
    & R^2 \ar[r]^{\tiny \begin{pmatrix} 1& \! \! \! \! 0 \\ 0& \! \! \! \! 1 \\ 0& \! \! \! \! 0 \\ 0& \! \! \! \! 0\end{pmatrix}} \ar[d]_{\tiny \begin{pmatrix} a &\! \! \! \! 0 \\ 0 &\! \! \! \! b \end{pmatrix}}
    & R^4 \ar[r]^{\tiny \begin{pmatrix} 0& \! \! \! \! 0& \! \! \! \! 1& \! \! \! \! 0 \\ 0& \! \! \! \! 0& \! \! \! \! 0& \! \! \! \! 1\end{pmatrix}} \ar[d]^{\psi:=\tiny \begin{pmatrix} a& \! \! \! \! 0& \! \! \! \! ca_2& \! \! \! \! -ca_1 \\ 0& \! \! \! \! b& \! \! \! \! db_2& \! \! \! \! -db_1 \\ 0& \! \! \! \! 0& \! \! \! \! x& \! \! \! \! y\end{pmatrix}}
    & R^2 \ar[r] \ar[d]^{\tiny \begin{pmatrix} x& \! \! \! \! y \end{pmatrix}} 
    & 0 \\
    0  \ar[r]
    & R^2 \ar[r]^{\tiny \begin{pmatrix} 1& \! \! \! \! 0 \\ 0& \! \! \! \! 1 \\ 0& \! \! \! \! 0 \end{pmatrix}}
    & R^3 \ar[r]^{\tiny \begin{pmatrix} 0& \! \! \! \! 0& \! \! \! \! 1 \end{pmatrix}} 
    & R \ar[r] 
    & 0 \\
  }
\]
with exact rows and columns; see \cite[Theorem 1.4.13]{BH} for instance.
The exact sequence 
$$0\to R/(a)\oplus R/(b) \to M\to R/(x,y)\to 0$$
is induced by the Snake Lemma, where $M=\cok\psi$.
Note that $R/(a)\oplus R/(b)$ is perfect of grade 1 and $R/(x,y)$ is perfect of grade 2.
Regarding the exact sequence appearing in \ref{underlying methods}(2), we obtain the following commutative diagram with exact rows by computing the connected homomorphism:
 \[
  \xymatrix@C=20pt@R=30pt
  {
    0  \ar[r]
    & \Ext^1_R(M,R) \ar[r]
    & \Ext^1_R(R/(a)\oplus R/(b),R) \ar[r] \ar[d]^\cong
    & \Ext^2_R(R/(x,y),R) \ar[r] \ar[d]^\cong
    & \Ext^2_R(M,R) \ar[r]
    & 0 \\
    & 
    & R/(a)\oplus R/(b) \ar[r]^\phi
    & R/(x,y) \ar[r]
    & R/(x,y,c,d) \ar[r] 
    & 0, \\
  }
\]
where $\phi(r+(a), s+(b))=cr+ds+(x,y,c,d)$ for $r,s\in R$.
Hence $\Hdim_R (R/(x,y,c,d))<\infty$ if and only if $\Hdim_R (\Ext_R^i(M,R))<\infty$ for every $i$.

Let us consider the quotient $R=k[x,y,z,v,w]/(xy,zw,zv)$ of a polynomial ring over a field $k$ and an $R$-regular sequence $x+y, z+w$.
Put $S=R/(x+y, z+w)R\cong k[x,z,v]/(x^2,z^2,zv)$ and $X=S/xS\cong R/(x+y, z+w, x)R\cong k[z,v]/(z^2,zv)$.
It is seen that $(\cdots \xrightarrow{x} S \xrightarrow{x} S \xrightarrow{x} S \to X\to 0)$ is exact, and thus $\Gdim_S X=0$; see \ref{underlying methods}(5).
As $x+y, z+w$ is an $R$-regular sequence, we get $\Gdim_R X=\Gdim_S X+2=2$ by \cite[Proposition 1.5.3]{C}.
By the previous paragraph, the cokernel $M$ of the following homomorphism satisfies the hypotheses of Corollary \ref{pd or Gdim cor 3}, where $\Hdim=\Gdim$ and $N=R$:
$$
R^4 \xrightarrow{\tiny \begin{pmatrix} x+y& \! \! \! \! 0& \! \! \! \! 0& \! \! \! \! -x \\ 0& \! \! \! \! z+w& \! \! \! \! x& \! \! \! \! 0 \\ 0& \! \! \! \! 0& \! \! \! \! x+y& \! \! \! \! z+w\end{pmatrix}} R^3 \to M \to 0.
$$
Note that the $M$ has finite projective dimension, and thus so does $\RHom_R(M,R)$,  but its second Ext module $X=\Ext^2_R(M,N)$ satisfies $\pd_R X=\pd_S X+2=\infty$ since $x+y, z+w\notin (x,y,z,v,w)^2R$.
\end{ex}

We provide an example of a module that has infinite projective dimension but finite G-dimension.

\begin{ex}
Let $S=k[x,y,z,v,w]/(v^2, vx+vw, xy+z^2)$ be a quotient of a polynomial ring over a field $k$ and $R=S/(y^2-wz)S$.
As a remark, since $k[x,y,z]/(xy+z^2)\cong k[s^2, st, t^2]\subseteq k[s,t]$, $xy+z^2$ is a prime element.
Note that 
$$
(v^2, vx+vw)=(v^2,x+w)\cap (v) \ {\rm and} \ (v^2, vx+vw, xy+z^2)=(v, xy+z^2)\cap (v^2, x+w, xy+z^2)
$$ 
are the primary decompositions of $(v^2, vx+vw)$ and $(v^2, vx+vw, xy+z^2)$, respectively.
Thus $xy+z^2$ is a nonzerodivisor on $k[x,y,z,v,w]/(v^2, vx+vw)$, and $y^2-wz$, $z$, and $w$ are nonzerodivisors on $S$.
On the other hand, $w$ is a nonzerodivisor on $R$.
Indeed, we see that $y^2-wz$ is a nonzerodivisor on $S/(w)S\cong k[x,y,z,v]/(v^2, vx, xy+z^2)$, which means that $w, y^2-wz$ is an $S$-regular sequence.
Viewing $S$ as a standard graded ring with a unique maximal ideal, we see that the reordered sequence $y^2-wz, w$ is also an $S$-regular sequence.
To summarize the above discussion, it is seen that $R$ is a non-Cohen--Macaulay ring of dimension 2.

By \ref{underlying methods}(5), we have exact sequences
$$
\cdots S^2 \xrightarrow{\tiny \begin{pmatrix} x& \! \! \! \! -z \\ z& \! \! \! \! y \end{pmatrix}} S^2 \xrightarrow{\tiny \begin{pmatrix} y& \! \! \! \! z \\ -z& \! \! \! \! x \end{pmatrix}} S^2 \xrightarrow{\tiny \begin{pmatrix} x& \! \! \! \! -z \\ z& \! \! \! \! y \end{pmatrix}} S^2 \cdots \ {\rm and } \ 
\cdots R^2 \xrightarrow{\tiny \begin{pmatrix} w& \! \! \! \! y \\ y& \! \! \! \! z \end{pmatrix}} R^2 \xrightarrow{\tiny \begin{pmatrix} z& \! \! \! \! -y \\ -y& \! \! \! \! w \end{pmatrix}} R^2 \xrightarrow{\tiny \begin{pmatrix} w& \! \! \! \! y \\ y& \! \! \! \! z \end{pmatrix}} R^2 \cdots.
$$
We consider the following exact sequences of complexes of free modules:
\[
  \xymatrix@C=30pt@R=30pt
  {
    & 0 \ar[r] \ar[d]
    & S \ar[r]^{\tiny \begin{pmatrix} y \\ -z \end{pmatrix}} \ar[d]^{\tiny \begin{pmatrix} 1 \\ 0\end{pmatrix}}
    & S^2 \ar@{=}[d] 
    &
    & 0 \ar[r] \ar[d]
    & R \ar[r]^{\tiny \begin{pmatrix} w \\ y \end{pmatrix}} \ar[d]^{\tiny \begin{pmatrix} 1 \\ 0\end{pmatrix}}
    & R^2 \ar@{=}[d] \\
%%%%%%%%
    S^2  \ar[r]^{\tiny \begin{pmatrix} y& \! \! \! \! z \\ -z& \! \! \! \! x \end{pmatrix}} \ar@{=}[d]
    & S^2 \ar[r]^{\tiny \begin{pmatrix} x& \! \! \! \! -z \\ z& \! \! \! \! y \end{pmatrix}} \ar@{=}[d]
    & S^2 \ar[r]_{\tiny \begin{pmatrix} y& \! \! \! \! z \\ -z& \! \! \! \! x \end{pmatrix}} \ar[d]_{\tiny \begin{pmatrix} 0& \! \! \! \! 1 \end{pmatrix}} 
    & S^2 \ar[d] 
    & R^2  \ar[r]^{\tiny \begin{pmatrix} w& \! \! \! \! y \\ y& \! \! \! \! z \end{pmatrix}} \ar@{=}[d]
    & R^2 \ar[r]^{\tiny \begin{pmatrix} z& \! \! \! \! -y \\ -y& \! \! \! \! w \end{pmatrix}} \ar@{=}[d]
    & R^2 \ar[r]_{\tiny \begin{pmatrix} w& \! \! \! \! y \\ y& \! \! \! \! z \end{pmatrix}} \ar[d]_{\tiny \begin{pmatrix} 0& \! \! \! \! 1 \end{pmatrix}} 
    & R^2 \ar[d] \\
%%%%%%%%
    S^2 \ar[r]^{\tiny \begin{pmatrix} y& \! \! \! \! z \\ -z& \! \! \! \! x \end{pmatrix}}
    & S^2 \ar[r]^{\tiny \begin{pmatrix} z& \! \! \! \! y \end{pmatrix}}
    & S \ar[r]
    & 0  
    & R^2 \ar[r]^{\tiny \begin{pmatrix} w& \! \! \! \! y \\ y& \! \! \! \! z \end{pmatrix}}
    & R^2 \ar[r]^{\tiny \begin{pmatrix} -y& \! \! \! \! w \end{pmatrix}}
    & R \ar[r]
    & 0.  \\
  }
\]
The middle rows and both $0\to S \xrightarrow{\tiny \begin{pmatrix} y \\ -z \end{pmatrix}} S^2$ and $0\to R \xrightarrow{\tiny \begin{pmatrix} w \\ y \end{pmatrix}} R^2$ are all exact as $z$ is a nonzerodivisor on $S$ and $w$ is a nonzerodivisor on $R$.
The induced long exact sequences of homologies show that 
$$
S^2 \xrightarrow{\tiny \begin{pmatrix} y& \! \! \! \! z \\ -z& \! \! \! \! x \end{pmatrix}} S^2 \xrightarrow{\tiny \begin{pmatrix} z& \! \! \! \! y \end{pmatrix}} S \ {\rm and}\ 
R^2 \xrightarrow{\tiny \begin{pmatrix} w& \! \! \! \! y \\ y& \! \! \! \! z \end{pmatrix}} R^2 \xrightarrow{\tiny \begin{pmatrix} -y& \! \! \! \! w \end{pmatrix}} R
$$
are exact.
By this, the $S$-module $S/(y, z)S$ is $\operatorname{G}$-perfect of grade 1 and the $R$-module $R/(y, w)R$ is $\operatorname{G}$-perfect of grade 1.
It follows from \cite[Proposition 1.5.3]{C} that $R/(y, z)R\cong S/(y, z)S$ is totally reflexive as an $R$-module.
Hereafter, based on the method in \ref{underlying methods}(2), we construct a nontrivial short exact sequence $0\to R/(y, w)R \to M \to R/(y, z)R \to 0$ of finitely generated $R$-modules having finite G-dimension.

All such short exact sequences appear as pushout diagrams 
\[
  \xymatrix@C=30pt@R=20pt
  {
    0  \ar[r]
    & (y,z)R \ar[r]^{\iota} \ar[d]^f
    & R \ar[r]^{\pi \qquad} \ar[d]
    & R/(y,z)R \ar[r] \ar@{=}[d]
    & 0 \\
    0  \ar[r]
    & R/(y,w)R \ar[r]
    & M \ar[r]
    \ar@{}[ul]|{\mathrm{PO}}
    & R/(y,z)R \ar[r] 
    & 0,
  }
\]
associated with some $R$-homomorphism $f\in\Hom_R((y,z)R, R/(y,w)R)$, where $\iota$ and $\pi$ are the natural injection and surjection, respectively.
Considering the long exact sequence
$$
\Hom_R(R, R/(y,w)R)\xrightarrow{\Hom_R(\iota, R/(y, w)R)} \Hom_R((y,z)R, R/(y,w)R)\xrightarrow{\xi} \Ext^1_R(R/(y,z)R, R/(y,w)R),
$$
the induced short exact sequence $0\to R/(y, w)R \to M \to R/(y, z)R \to 0$ does not split if and only if $f\notin\ker \xi=\image\Hom_R(\iota, R/(y, w)R)$.
We prove that there exists a homomorphism $f:(y,z)R\to R/(y,w)R$ with $f(ya+zb)=vza+(y,w)R$ for $a,b\in R$.
Put $\m=(x,y,z,v,w)R$.
The $R$-homomorphism $R \to R/(y,w)R: a \mapsto vza+(y,w)R$ induces an $R$-homomorphism $F: R/\m \to R/(y,w)R: a+\m \mapsto vza+(y,w)R$.
On the other hand, the natural surjection $R^2 \to (y,z)R: {}^t\!\begin{pmatrix} a& \! \! \! b\end{pmatrix}\mapsto ya+zb$ induces an isomorphism $G: (R/\m)^2 \xrightarrow{\cong} (y,z)R/\m (y,z)R$.
The desired homomorphism $f$ is the composition 
$$(y,z)R \twoheadrightarrow (y,z)R/\m (y,z)R \xrightarrow{G^{-1}} (R/\m)^2 \xrightarrow{\tiny \begin{pmatrix} 1& \! \! \! \! 0\end{pmatrix}} R/\m \xrightarrow{F} R/(y,w)R.$$
We see that $f\notin\image\Hom_R(\iota, R/(y, w)R)$ since $f(yR)\ne 0$ in $R/(y, w)R$.

Applying the Horseshoe lemma to the short exact sequence  $0\to R/(y, w)R \to M \to R/(y, z)R \to 0$ obtained from the pushout diagram concerning the homomorphism $f$ in the previous paragraph, we get the commutative diagram 
\[
  \xymatrix@C=30pt@R=20pt
  {
    0  \ar[r]
    & R^2 \ar[r]^{\tiny \begin{pmatrix} 1& \! \! \! \! 0 \\ 0& \! \! \! \! 1 \\ 0& \! \! \! \! 0 \\ 0& \! \! \! \! 0\end{pmatrix}} \ar[d]^{\tiny \begin{pmatrix} y& \! \! \! \! w \end{pmatrix}}
    & R^4 \ar[r]^{\tiny \begin{pmatrix}  0& \! \! \! \! 0& \! \! \! \! 1& \! \! \! \! 0 \\ 0& \! \! \! \! 0& \! \! \! \! 0& \! \! \! \! 1 \end{pmatrix}} \ar[d]^{\tiny \begin{pmatrix}  y& \! \! \! \! w& \! \! \! \! -vz& \! \! \! \! 0 \\ 0& \! \! \! \! 0& \! \! \! \! y& \! \! \! \! z \end{pmatrix}}
    & R^2 \ar[r] \ar[d]^{\tiny \begin{pmatrix} y& \! \! \! \! z \end{pmatrix}}
    & 0 \\
    0  \ar[r]
    & R \ar[r]_{\tiny \begin{pmatrix} 1 \\ 0 \end{pmatrix}} \ar[d]
    & R^2 \ar[r]_{\tiny \begin{pmatrix} 0& \! \! \! \! 1 \end{pmatrix}} \ar[d]
    & R \ar[r] \ar[d]
    & 0 \\
    0  \ar[r]
    & R/(y, w)R \ar[r] \ar[d]
    & M \ar[r] \ar[d]
    & R/(y, z)R \ar[r] \ar[d]
    & 0 \\
    & 0
    & 0
    & 0
    &  \\
  }
\]
with exact rows and columns; see \ref{underlying methods}(4).
With this, we obtain an explicit presentation of $M$.
By \ref{underlying methods}(2), 
$$
\Ext^i_R(M, R)=
\begin{cases}
\Hom_R(R/(y, z), R) & (i=0), \\
\Ext^1_R(R/(y, w), R) & (i=1), \\
0 & ({\rm otherwise}).
\end{cases}
$$
The short exact sequence $0\to R\xrightarrow{w} R\to R/(w)R\to 0$ induces a long exact sequence
\begin{align*}
0&\to \Hom_R(M, R)\xrightarrow{w} \Hom_R(M, R) \to \Hom_R(M, R/(w)R) \\
&\to \Ext^1_R(M, R)\xrightarrow{w} \Ext^1_R(M, R) \to \Ext^1_R(M, R/(w)R) \to \Ext^2_R(M, R)=0.
\end{align*} 
The multiplication by $w$ on $\Ext^1_R(M, R)$ is the zero map since $\Ext^1_R(M, R)\cong \Ext^1_R(R/(y, w), R)$.
Then
\begin{itemize}
\item $0\to \Hom_R(M, R)\xrightarrow{w} \Hom_R(M, R) \to \Hom_R(M, R/(w)R) \to \Ext^1_R(M, R)\to 0$ is exact,
\item $\Ext^1_R(M, R/(w)R) \cong \Ext^1_R(M, R)$,
\item $\Ext^i_R(M, R/(w)R)=0$ for $i\ge 2$.
\end{itemize}
As $\Hom_R(M, R)\cong\Hom_R(R/(y, z), R)$ and $\Ext^1_R(M, R)\cong \Ext^1_R(R/(y, w), R)$ have finite G-dimension, so do $\Hom_R(M, R/(w)R)$ and $\Ext^1_R(M, R/(w)R)$.
In particular, $M$ and $N=R/(w)R$ satisfies the condition stated in the third line at the beginning of this section. (Note that $R$ is graded, but $M$ is not.)

Finally, we note that the $R$-module $M$ has infinite projective dimension.
Indeed, if $M$ admits a bounded projective resolution over $R$, then it is a resolution by modules having finite projective dimension over $S$ since $\pd_S R=1$, which means $\pd_S M<\infty$.
As seen above, $w$ is a nonzerodivisor on $S$, and it is easy to see that $y$ is a nonzerodivisor on $S/(w)S\cong k[x,y,z,v]/(v^2, vx, xy+z^2)$.
Hence $w, y$ is an $S$-regular sequence, and thus $S/(y, w)S\cong R/(y, w)R$ has finite projective dimension over $S$.
The exact sequence $0\to R/(y, w)R \to M \to R/(y, z)R \to 0$ yields that $S/(y, z)S\cong R/(y, z)R$ has finite projective dimension over $S$.
However, the complex
$$
\cdots S^2 \xrightarrow{\tiny \begin{pmatrix} x& \! \! \! \! -z \\ z& \! \! \! \! y \end{pmatrix}} S^2 \xrightarrow{\tiny \begin{pmatrix} y& \! \! \! \! z \\ -z& \! \! \! \! x \end{pmatrix}} S^2 \xrightarrow{\tiny \begin{pmatrix} x& \! \! \! \! -z \\ z& \! \! \! \! y \end{pmatrix}} S^2 \xrightarrow{\tiny \begin{pmatrix} y& \! \! \! \! z \\ -z& \! \! \! \! x \end{pmatrix}} S^2 \xrightarrow{\tiny \begin{pmatrix} z& \! \! \! \! y \end{pmatrix}} S \to 0
$$
is a (minimal) free resolution of $S/(y, z)S$, this leads to a contradiction.
\end{ex}

%\begin{rem}
%逆は成立しない？
%Let $R=k[x]/(x^2)$ be a quotient of a polynomial ring over a field $k$ and $M=R/(x)$. By \ref{underlying methods}(5), the complex $\cdots \to R \xrightarrow{x} R \xrightarrow{x} R \to \cdots$ is exact, and thus $M$ is totally reflexive. Since all the derivative maps of $\cdots \to \Hom_R(R,M) \xrightarrow{x} \Hom_R(R,M) \xrightarrow{x} \Hom_R(R,M) \to \cdots$ are zero, we have $\Ext^i(M,M)\cong \Hom_R(R,M)\cong M$ for  all $i$, and they are all totally reflexive.
%\end{rem}

\subsection*{Complete intersection dimension} 

To close, we consider complete intersection dimension briefly.

\begin{dfn}%\label
Let $R$ be a local ring.
A \textit{quasi-deformation} of $R$ is a pair of homomorphism $R\to R' \leftarrow S$ of local rings, where $R\to R'$ is flat and $R' \leftarrow S$ is surjective with the kernel generated by a regular sequence. 
For finitely generated $R$-module $M$, the \textit{complete intersection dimension} of $M$ is defined to be $\CIdim_R M:={\rm inf}\{\pd_S (M\otimes_R R')-\pd_S R' \mid R\to R' \leftarrow S$ is a quasi-deformation of $R\}$.
\end{dfn}

For finitely generated module $M$ over a local ring $R$, it has finite complete intersection dimension if and only if there is a quasi-deformation $R\to R' \leftarrow S$ of $R$ such that $\pd_S (M\otimes_R R')<\infty$.
It is a difficult problem whether, when two distinct modules $M$ and $N$ both have finite complete intersection dimension, one can choose a common quasi-deformation $R\to R' \leftarrow S$ of $R$ such that $\pd_S (M\otimes_R R')<\infty$ and $\pd_S (N\otimes_R R')<\infty$.
Apart from that difficulty, we obtain the complete intersection dimension version of Corollaries \ref{pd or Gdim cor 2}, \ref{pd or Gdim cor 2-2}, and \ref{Rfd CM 1}.
All similar results follow in the same way from the following proposition.

\begin{prop}\label{C.I-dim cor}
Let $R$ be a local ring, and $M$ and $N$ nonzero finitely generated $R$-modules with $\pd_R N<\infty$.
If there is a quasi-deformation $R\to R' \leftarrow S$ of $R$ such that $\pd_S (\Ext^i_R(M,N)\otimes_R R')<\infty$ for any $0\le i\le \Rfd_R M$.
Then $\pd_S (M\otimes_R R')<\infty$, in particular, $\CIdim_R M<\infty$.
\end{prop}

We prepare one lemma and prove Proposition \ref{C.I-dim cor}.

\begin{lem}\label{C.I-dim lemma}
Let $(R,\m)$ be a local ring, $x\in\m$ a nonzerodivisor on $R$, and $M$ and $N$ nonzero finitely generated $R/xR$-modules.
Suppose that $S$ is a ring and $R$ is an $S$-algebra.
If $\Hdim_S(\RHom_{R/xR}(M,N))<\infty$, then $\Hdim_S(\RHom_{R}(M,N))<\infty$.
\end{lem}

\begin{proof}
Let $F^{R/xR}_M$ be an $(R/xR)$-free resolution of $M$ and $F^R_M$ an $R$-free resolution of $M$.
Similarly to the proof of \cite[Lemma 4.2]{KLSL}, there is a short exact sequence 
$$
0\to F^{R/xR}_M[-1] \to  R/xR\otimes_R F^{R}_M \to  F^{R/xR}_M\to 0
$$
of complexes of free $(R/xR)$-modules.
This indeces a short exact sequence 
$$
0\to \Hom_{R/xR}(F^{R/xR}_M, N) \to \Hom_{R/xR}(R/xR\otimes_RF^{R}_M,N) \to \Hom_{R/xR}(F^{R/xR}_M,N)[1] \to 0.
$$
From this, the claim holds since $\RHom_{R/xR}(M,N)=\Hom_{R/xR}(F^{R/xR}_M, N)$ and $\RHom_{R}(M,N)=\Hom_{R}(F^{R}_M,N)\cong \Hom_{R/xR}(R/xR\otimes_RF^{R}_M,N)$
\end{proof}

\begin{proof}[Proof of Proposition \ref{C.I-dim cor}]
Put $r=\Rfd_R M$.
Suppose that there is a quasi-deformation $R\to R' \leftarrow S$ such that $\pd_S(\Ext^i_{R'}(M',N'))<\infty$ for any $0\le i\le r$, where $(-)'=(-)\otimes_R R'$.
We prove $\pd_S M'<\infty$.

For any $0\le i\le r$, $\Ext^i_R(M,N)$ has finite G-dimension over $R$ since $\pd_S(\Ext^i_{R'}(M',N'))<\infty$.
By Corollary \ref{pd or Gdim cor 2-2}, we have $\Gdim_R M=r$.
Lemma \ref{pd G-dim keisan}(2) asserts that $\Ext^i_R(M,N)=0$ for all $i>r$.
Then $\Ext^i_{R'}(M',N')=0$ for all $i>r$ as $R'$ is flat over $R$, which means $\pd_S (\RHom_{R'}(M',N'))<\infty$.
Lemma \ref{C.I-dim lemma} implies $\pd_S (\RHom_{S}(M',N'))<\infty$.
As $\Gdim_{R'} M'<\infty$ and $\pd_{R'} N'<\infty$, we obtain $\Gdim_S M'<\infty$ and $\pd_S N'<\infty$.
Let $\widehat{S}$ be the completion of $S$ and $D$ the dualizing complex of $\widehat{S}$.
Since $\Gdim_{\widehat{S}} (M'\otimes_S \widehat{S})<\infty$ and $\pd_{\widehat{S}} (N'\otimes_S \widehat{S})<\infty$, $(M'\otimes_S \widehat{S})\otimes_{\widehat{S}}^\L D$ is quasi-isomorphic to a bounded complex of finitely generated $\widehat{S}$-modules and we get
\begin{align*}
\RHom_{\widehat{S}}(M'\otimes_S \widehat{S},N'\otimes_S \widehat{S})&\simeq
\RHom_{\widehat{S}}(M'\otimes_S \widehat{S},\RHom_{\widehat{S}}(D,  (N'\otimes_S \widehat{S})\otimes_{\widehat{S}}^\L D)) \\
&\simeq \RHom_{\widehat{S}}((M'\otimes_S \widehat{S})\otimes_{\widehat{S}}^\L D, (N'\otimes_S \widehat{S})\otimes_{\widehat{S}}^\L D).
\end{align*}
Note that $\pd_{\widehat{S}} (\RHom_{\widehat{S}}(M'\otimes_S \widehat{S},N'\otimes_S \widehat{S}))<\infty$ and $\id_{\widehat{S}} ((N'\otimes_S \widehat{S})\otimes_{\widehat{S}}^\L D)<\infty$.
It follows from \cite[Lemma 6.2.12]{CF} that $\id_{\widehat{S}} ((M'\otimes_S \widehat{S})\otimes_{\widehat{S}}^\L D)<\infty$, and hence $\pd_{\widehat{S}} (M'\otimes_S \widehat{S})<\infty$. So $\pd_S M'<\infty$.
\end{proof}

Below is a direct corollary of Proposition \ref{C.I-dim cor}.

\begin{cor}\label{C.I-dim cor 2}
Let $R$ be a local ring, and $M$ and $N$ nonzero finitely generated $R$-modules with $\pd_R N<\infty$.
Suppose that there is $r\ge 0$ such that $\CIdim_R (\Ext^r_R(M,N))<\infty$ and $\Ext^i_R(M,N)=0$ for any $0\le i\le \Rfd_R M$ with $i\ne r$.
Then $\CIdim_R M=r=\Rfd_R M$.
\end{cor}

\begin{ac}
The author would like to thank his supervisor Ryo Takahashi for valuable suggestions and comments.
The author thanks Yuki Mifune for pointing out where a fact used in this paper is stated explicitly, Victor Daniel Mendoza Rubio for pointing out several errors in a previous version of this paper, and Yuya Otake for doing both.
The author was partly supported by Grant-in-Aid for JSPS Fellows Grant Number 23KJ1117.
\end{ac}

%%%%%%%%%%%%%%%%%%%%%%%%%%%%%%%%%%%%%%%%%%%%%%%%%%%%%%%%%%%%%


\begin{thebibliography}{99}
%\bibitem{AF}
%{\sc L. L. Avramov; H.-B. Foxby}, Ring homomorphisms and finite Gorenstein dimension, {\em Proc. London Math. Soc. (3)} {\bf 75} (1997), no. 2, 241--270.
\bibitem{AIL}
{\sc L. L. Avramov; S. B. Iyengar; J. Lipman}, Reflexivity and rigidity for complexes. I. Commutative rings, {\em Algebra Number Theory} {\bf 4} (2010), no. 1, 47--86.
\bibitem{BH}
{\sc W. Bruns; J. Herzog}, Cohen--Macaulay rings, revised edition, Cambridge Studies in Advanced Mathematics {\bf 39}, {\it Cambridge University Press, Cambridge}, 1998.
\bibitem{C}
{\sc L. W. Christensen}, Gorenstein dimensions, Lecture Notes in Math. {\bf 1747}, {\em Springer-Verlag, Berlin}, 2000.
\bibitem{CFrH}
{\sc L. W. Christensen; A. Frankild; H. Holm}, On Gorenstein projective, injective and flat dimensions—a functorial description with applications, {\em J. Algebra} {\bf 302} (2006), no. 1, 231--279.
\bibitem{CF}
{\sc L. W. Christensen; H.-B. Foxby}, Hyperhomological algebra with applications to commutative. Avaiable at https://www.math.ttu.edu/~lchriste/download/918-final.pdf (2006).
\bibitem{CFH} 
{\sc L. W. Christensen; H.-B. Foxby; H. Holm}, Derived Category Methods in Commutative Algebra, {\em Springer Monogr. Math.}, Springer, Cham, (2024).
\bibitem{DHM}
{\sc S. Dey; R. Holanda; C. B. Miranda-Neto}, Homological dimensions, the Gorenstein property, and special cases of some conjectures, preprint (2024), {\tt arXiv:2405.00152}.
\bibitem{F}
{\sc H.-B. Foxby}, Isomorphisms between complexes with applications to the homological theory of modules, {\em Math. Scand.} {\bf 40} (1977), no. 1, 5--19.
\bibitem{GP}
{\sc D. Ghosh; T. J. Puthenpurakal}, Gorenstein rings via homological dimensions, and symmetry in vanishing of Ext and Tate cohomology, {\em Algebr. Represent. Theory} {\bf 27} (2024), no. 1, 639--653.
\bibitem{GS}
{\sc D. Ghosh; M. Samanta}, Auslander--Reiten conjecture for modules whose (self) dual has finite complete intersection dimension, preprint (2024), {\tt arXiv:2405.01497}.
\bibitem{GT}
{\sc D. Ghosh; R. Takahashi}, Auslander--Reiten conjecture and finite injective dimension of Hom, {\em Kyoto J. Math.} {\bf 64} (2024), no. 1, 229--243.
\bibitem{HJM}
{\sc R. Holanda; V. H. Jorge-P\'{e}rez; V. D. Mendoza-Rubio}, On Extension modules of finite homologicl dimension, preprint (2025), {\tt arXiv:2504.15224}.
\bibitem{Kaw}
{\sc T. Kawasaki}, On arithmetic Macaulayfication of Noetherian rings, {\em Trans. Amer. Math. Soc.} {\bf 354} (2002), no. 1, 123--149.
\bibitem{KLSL}
{\sc K. Kimura; J. Lyle; A. J. Soto Levins}, On the depth of tensor products over Cohen--Macaulay rings, preprint (2025), {\tt arXiv:2505.00441}.
\bibitem{Mat}
{\sc H. Matsumura}, Commutative ring theory, Translated from the Japanese by M. Reid, Second edition, Cambridge Studies in Advanced Mathematics {\bf 8}, {\em Cambridge University Press, Cambridge}, 1989.
\bibitem{MJ2}
{\sc V. D. Mendoza-Rubio; V. H. Jorge-P\'{e}rez}, The Auslander--Reiten Conjecture, Finite $C$-Injective Dimension of Hom, and vanishing of Ext, {\em J. Commut. Algebra} {\bf 17}, (2025), no. 2, 153--168.
\bibitem{MJ}
{\sc V. D. Mendoza-Rubio; V. H. Jorge-P\'{e}rez}, On modules whose dual is of finite Gorenstein dimension, {\em Collect. Math.}, (2025). https://doi.org/10.1007/s13348-025-00466-y.
\bibitem{ZG} 
{\sc M. R. Zargar; M. Gheibi}, Numerical aspects of complexes of finite homological dimensions, {\em J. Commut. Algebra} {\bf 16} (2024), no. 3, 353--362.
\end{thebibliography}
\end{document}